\documentclass[11pt]{amsart}


\usepackage{amsmath,amssymb,amsthm,multicol,mathtools,hyperref,dsfont,pinlabel,enumitem}
\usepackage[normalem]{ulem}
\usepackage[usenames,dvipsnames]{color}
\usepackage{soul}
\usepackage{tikz}
\usetikzlibrary{matrix}
\usepackage{mathrsfs}

\newcommand\lb{\left\bracevert}
\newcommand\rb{\right\bracevert}
\newcommand\cut{\setminus\!\setminus}
\newcommand\wt{\widetilde} 
\newcommand\wh[1]{\scalebox{.95}{$\widehat{
#1
}$}}

\newcommand\Z{\mathbb{Z}}

\newcommand\lla{\left\langle}
\newcommand\rra{\right\rangle}

\newcommand\lk{\text{lk}}
\newcommand\bbm{\begin{bmatrix}}
\newcommand\ebm{\end{bmatrix}}
\newcommand\red[1]{\color{red}#1\color{black}}
\newcommand\FG[1]{\color{ForestGreen}#1\color{black}}
\newcommand\violet[1]{\color{Violet}#1\color{black}}
\newcommand\Navy[1]{\color{NavyBlue}#1\color{black}}

\newcommand\white[1]{\color{white}#1\color{black}}
\newcommand\brown[1]{\color{Brown}#1\color{black}}
\newcommand\sepia[1]{\color{Sepia}#1\color{black}}
\newcommand\Yel[1]{\color{Yellow}#1\color{black}}
\newcommand\Cyan[1]{\color{Cyan}#1\color{black}}
\newcommand\Orange[1]{\color{BurntOrange}#1\color{black}}
\newcommand\Gray[1]{\color{Gray}#1\color{black}}

\newcommand\congmod[3]{#1\equiv#2~(\text{mod }#3)}
\newcommand\ncongmod[3]{#1\not\equiv#2~(\text{mod }#3)}
\newcommand\inter[1]{\overset{_\circ}{\nu}#1}
\newcommand\PosCrScr{\raisebox{-1pt}{\includegraphics[height=7pt]{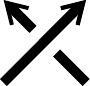}}}
\newcommand\NegCrScr{\raisebox{-1pt}{\includegraphics[height=7pt]{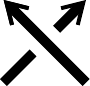}}}
\newcommand\MobPos{\raisebox{-2pt}{\includegraphics[height=11pt]{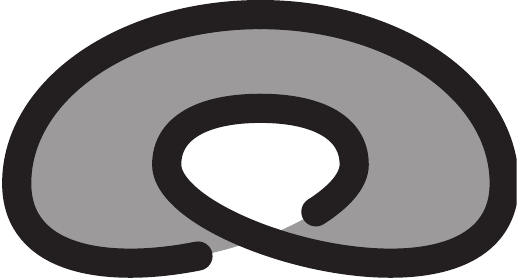}}}
\newcommand\MobNeg{\raisebox{-2pt}{\includegraphics[height=11pt]{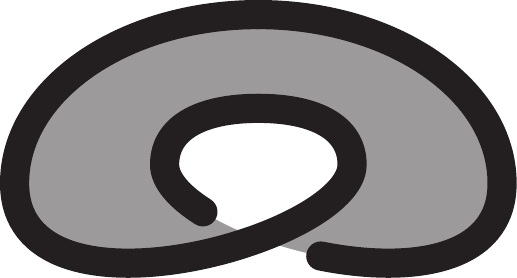}}}

\theoremstyle{plain}
\newtheorem{theorem}{Theorem}[section]
\newtheorem{lemma}[theorem]{Lemma}
\newtheorem{subl}[theorem]{Sublemma}
\newtheorem{obs}[theorem]{Observation}
\newtheorem{prop}[theorem]{Proposition}
\newtheorem{cor}[theorem]{Corollary}

\newtheorem{fact}[theorem]{Fact}

\newtheorem*{T:greene1}{Theorem 1.1 of \cite{greene}}
\newtheorem*{T:greene2}{Theorem 1.2 of \cite{greene}}
\newtheorem*{L:greene}{Lemma 3.3 of \cite{greene}}
\newtheorem*{T:plumb}{Theorem \ref{T:plumb}}
\newtheorem*{T:tait}{Theorem \ref{T:tait}}
\newtheorem*{T:tait1}{Theorem \ref{T:tait1}}
\newtheorem*{T:tait2}{Theorem \ref{T:tait2}}
\newtheorem*{T:DBW}{Theorem \ref{T:DBW}}
\newtheorem*{C:DBW}{Corollary \ref{C:DBW}}

\theoremstyle{definition}
\newtheorem{convention}[theorem]{Convention}

\newtheorem{procedure}[theorem]{Procedure}
\newtheorem{notation}[theorem]{Notation}
\newtheorem{definition}[theorem]{Definition}
\newtheorem{question}[theorem]{Question}
\newtheorem{problem}[theorem]{Problem}
\newtheorem{example}[theorem]{Example}

\newtheorem{move}{Move}

\theoremstyle{remark}
\newtheorem{rem}[theorem]{Remark}

\numberwithin{equation}{section}

\setlength{\paperwidth}{8.5in} \setlength{\textwidth}{4.5in}
\oddsidemargin=1in \evensidemargin=1in

\begin{document}

\title{A geometric proof of the flyping theorem}

\author{Thomas Kindred}

\address{Department of Mathematics, Wake Forest University \\
Winston-Salem, North Carolina 27109, USA} 

\email{kindret@wfu.edu}
\urladdr{www.thomaskindred.com}



\begin{abstract}
In 1898, Tait asserted several properties of alternating knot diagrams.
These assertions became known as Tait's conjectures and remained open until the discovery of the Jones polynomial in 1985. The new polynomial invariants soon led to proofs of all of Tait's conjectures, culminating in 1993 with Menasco--Thistlethwaite's proof of Tait's flyping conjecture. 

In 2017, Greene (and independently Howie) answered a longstanding question of Fox by characterizing alternating links geometrically.  Greene then used his characterization to give the first {\it geometric} proof of part of Tait's conjectures. 
We use Greene's characterization, Menasco's crossing ball structures, and a hierarchy of isotopy and {\it re-plumbing} moves to give the first entirely geometric proof of Menasco--Thistlethwaite's flyping theorem. 
\end{abstract}

\maketitle


\section{Introduction}\label{S:intro}

P.G. Tait asserted in 1898 that all reduced alternating diagrams of a given prime link in $S^3$ minimize crossings, have equal writhe, and are related by {\it flype} moves  (see Figure \ref{Fi:flype}) \cite{tait}. Tait's conjectures remained unproven until the 1985 discovery of the Jones polynomial, which quickly led to proofs of Tait's conjectures about crossing number and writhe.   Tait's flyping conjecture remained open until 1993, when Menasco--Thistlethwaite gave its first proof \cite{menthis91,menthis93}, which they described as follows:

\begin{quotation}
The proof of the Main Theorem stems from an analysis of the [checkerboard surfaces] of a link diagram, in which we use geometric techniques [introduced in \cite{men84}]... and properties of the Jones and Kauffman polynomials.... Perhaps the most striking use of polynomials is... where we ``detect a flype" by using the fact that if just one crossing is switched in a reduced alternating diagram of $n$ crossings, and if the resulting link also admits an alternating diagram, then the crossing number of that link is at most $n - 2$. Thus, although the proof of the Main Theorem has a strong geometric flavor, it is not entirely geometric; the question remains open as to {\it whether there exist purely geometric proofs of this and other results} that have been obtained with the help of new polynomial invariants.
\end{quotation}

We answer part of Menasco--Thistlethwaite's question by giving the first entirely geometric proof of Tait's flyping conjecture:

\begin{T:tait}[Tait's flyping conjecture \cite{menthis91,menthis93}]
All reduced alternating diagrams of a given prime link $L\subset S^3$ are related by flype moves and planar isotopy.
\end{T:tait}

(The version of Theorem \ref{T:tait} that we prove is a slightly stronger statement.) In the process, we obtain new geometric proofs of other parts of Tait's conjectures, which were first proven independently by Kauffman, Murasugi, and Thistlethwaite using the Jones polynomial, and were first proved geometrically by Greene:
\begin{T:tait1}[Part of Tait's first conjecture \cite{greene,kauff,mur,this,tur}]
All reduced alternating diagrams of a given link $L\subset S^3$ have the same number of crossings.
\end{T:tait1}
\begin{T:tait2}[Tait's second conjecture \cite{greene,mur87ii,this88a}]
All reduced alternating diagrams of a given link $L\subset S^3$ have equal writhe.
\end{T:tait2}

\begin{figure}
\begin{center}
\labellist
\hair4pt
\pinlabel {\scalebox{.75}{$T_1$}} [c] at 29 70
\pinlabel {\scalebox{.75}{$T_2$}} [c]  at 100 70
\pinlabel {\scalebox{.75}{$T_1$}} [c]  at 233 70
\pinlabel {\scalebox{.75}{\scalebox{1}[-1]{$T_2$}}} [c]  at 305 70
\pinlabel {\scalebox{.75}{$\FG{\boldsymbol{\gamma}}$}} [c]  at 140 110
\pinlabel {\scalebox{.75}{$\FG{\boldsymbol{\gamma}}$}} [c]  at 450 25
\endlabellist
\includegraphics[width=\textwidth]{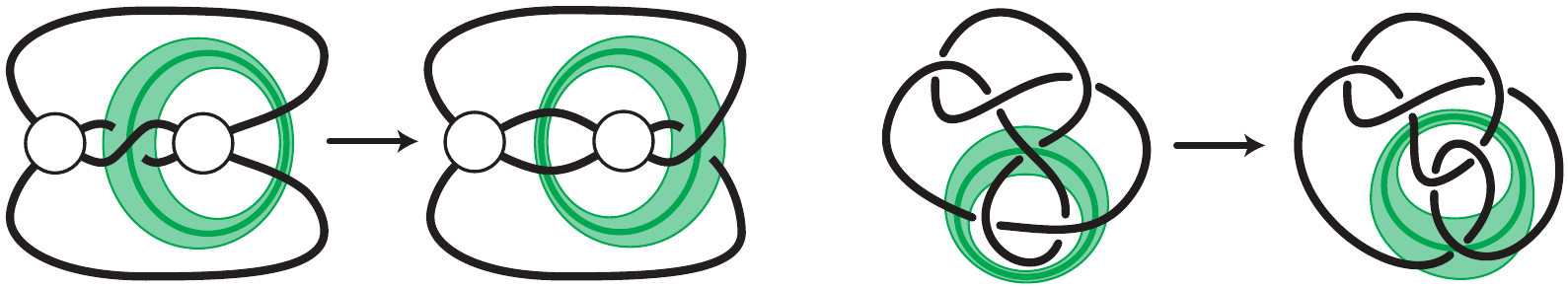}
\caption{A {\it flype} along an annulus $\FG{A=\nu\gamma}\subset S^2$.}
\label{Fi:flype}
\end{center}
\end{figure}

Like Menasco--Thistlethwaite's proof, ours stems from an analysis of checkerboard surfaces and uses the geometric techniques introduced in \cite{men84}. The most striking difference between our proof and the original proof in \cite{menthis93} is that we ``detect flypes" via {\it re-plumbing} moves. Indeed, any flype move isotopes one checkerboard surface and {\it re-plumbs} the other (see Figure \ref{Fi:FlypeReplumb}); it follows that the checkerboard surfaces from any flype-related diagrams are related pairwise by isotopy and such re-plumbing moves. The main idea behind our proof of the flyping theorem is to reverse this reasoning by establishing this plumb-equivalence geometrically.  Thus, our proof of the flyping theorem is {entirely} geometric, not just in the formal sense that it does not use the Jones polynomial, but also in the more genuine sense that it conveys a geometric way of {\it understanding why} the flyping theorem is true.  

To translate the question of flype-equivalence of link diagrams to a question about plumb-equivalence of spanning surfaces, we extend recent insights of Greene and Howie \cite{greene,howie}\footnote{Those insights answered another longstanding question, this one from Ralph Fox: ``What [geometrically] is an alternating knot [or link]?''} by 
establishing a new correspondence between prime alternating link diagrams on $S^2$ and pairs of essential definite spanning surfaces (see Conventions \ref{Conv:Isotopy} and \ref{Conv:+-} and Definition \ref{D:Entire}):

\begin{T:DBW}
Suppose $B,W$ and $B',W'$ are the respective checkerboard surfaces of prime alternating diagrams $D$ and $D'$ of a link $L\subset S^3$. Then $D$ and $D'$ are equivalent if and only if $B$ and $B'$ are isotopic in $S^3\setminus\inter L$, as are $W$ and $W'$.
\end{T:DBW}

\begin{C:DBW}
There is a bijective correspondence between equivalence classes of prime alternating link diagrams $D_{B,W}$ on $S^2$ and pairs $B,W$ of isotopy classes of essential definite surfaces of opposite signs spanning the same prime link in $S^3$.
\end{C:DBW}

Theorem \ref{T:DBW} does not extend to non-prime or non-alternating diagrams. For a simple example, consider any two distinct positive 5-crossing diagrams of the unknot: both white checkerboard surfaces will be disks, and both black surfaces will be isotopic to $\natural_{i=1}^5\MobPos$. 
See Example \ref{Ex:DBW} for a prime, non-alternating example.


\begin{figure}
\begin{center}%
\includegraphics[width=\textwidth]{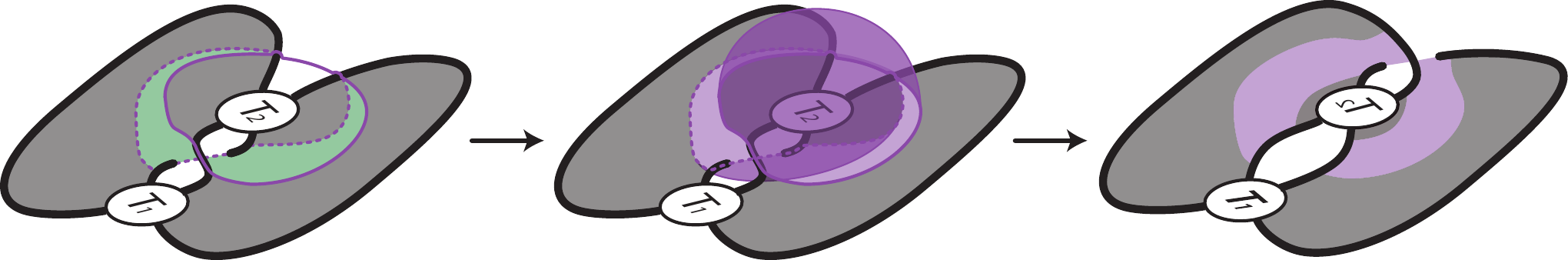}
\caption{A flype isotopes one checkerboard surface (here, $W$) and re-plumbs the other.}\label{Fi:FlypeReplumb}
\end{center}
\end{figure}

To utilize this correspondence, we use Menasco's crossing ball structures in \textsection\textsection\ref{S:MenascoH}-\ref{S:Replumb} to describe a hierarchy of isotopy moves (Moves \ref{M:1}-\ref{M:9}) and re-plumbing moves (Move \ref{M:10}) and prove: 
\begin{T:plumb}
If $B$, $W$ are the checkerboard surfaces from a prime alternating diagram $D\subset S^2$ of a link $L\subset S^3$, then any essential positive-definite surface $F$ spanning $L$ is plumb-related to $B$ (via Moves \ref{M:1}-\ref{M:10}); likewise for essential negative-definite surfaces and $W$.
\end{T:plumb}
Yet, it is not obvious that the re-plumbing Move \ref{M:10} is always sort of re-plumbing move associated with flypes.  In \textsection\ref{S:Main}, however, we will prove that this is always the case when $B'$ is in ``\ref{M:9}-good position,'' meaning that none of Moves $\ref{M:1}-\ref{M:9}$ are possible. (This is Theorem \ref{T:Bad}.) Therefore, with the setup from Theorem \ref{T:DBW} and notation from Corollary \ref{C:DBW}, $D=D_{B,W}$ and $D_{B',W}$ are flype-related, as are $D_{B',W}$ and $D_{B',W'}=D'$.  %
For expository reasons, we include some proofs in \textsection\textsection\ref{S:Back}-\ref{S:Replumb} but postpone others until \textsection\textsection\ref{S:Technical2}-\ref{S:Technical5}.

{\bf Thank you} to Colin Adams for posing a question about flypes and checkerboard surfaces during SMALL 2005 which eventually led to the insight behind Figure \ref{Fi:FlypeReplumb}. Thank you to Hugh Howards, Josh Howie, and Alex Zupan for helpful discussions. Thank you to Josh Greene for helpful discussions and especially for encouraging me to think about this problem.

\section{Alternating diagrams and definite surfaces}\label{S:Back}

\subsection{Basic definitions}\label{S:Basic}

All links are in $S^3$ and all link diagrams are on $S^2$. We call a {link}  $L$ {\it prime} if $L$ is not a trivial link of one or two components and any connect sum decomposition $L=L_1\#L_2$ has $L_1=\bigcirc$ or $L_2=\bigcirc$.  We call a link {\it diagram} $D$ {\it prime} if $D$ has more than one crossing and any connect sum decomposition $D=D_1\#D_2$ has $D_1=\bigcirc$ or $D_2=\bigcirc$. Our extra assumptions that  $L\neq \bigcirc~\bigcirc$ and that $D$ has more than one crossing are unconventional but convenient because they imply:
\begin{fact}\label{F:Prime}
Every prime link is nontrivial and nonsplit (i.e. the link complement is irreducible), and every prime link diagram is nontrivial, connected and reduced.%
\footnote{A diagram $D$ is {\it reduced} if no crossing is nugatory, i.e. incident to fewer than four distinct regions of $S^2\setminus D$. }
\end{fact}
Let $\nu L$ be a closed regular neighborhood of a link $L$ with projection $\pi_L:\nu L\to L$.%
\footnote{
$\nu X$ always denotes a closed regular neighborhood of $X$, usually taken in $S^3$.}
One can define {\it spanning surfaces} $F$ for $L$ in two ways; in both definitions, $F$ is compact and unoriented (orientable or not)
, and each component of $F$ has nonempty boundary. First, $F$ is an embedded surface in $S^3$ with $\partial F=L$. Alternatively, $F$ is properly embedded in the link exterior $S^3\setminus\inter{L}$ such that $\partial F$ intersects each meridian on $\partial\nu L$ transversally in one point.%
\footnote{
A {\it meridian} on $\partial \nu L$ is a circle $\pi_L^{-1}(x)\cap\partial\nu L$ for a point $x\in L$.}
We use the latter definition throughout, except where noted otherwise.

The rank $\beta_1(F)$ of the first homology group of a spanning surface $F$ counts the number of ``holes'' in $F$. When $F$ is connected, $\beta_1(F)=1-\chi(F)$ counts the number of cuts along disjoint, properly embedded arcs required to reduce $F$ to a disk. Thus:

\begin{obs}\label{O:beta1}
If $\alpha$ is a properly embedded arc in a spanning surface $F$ and $F'=F\setminus\inter\alpha$, then $\beta_1(F')-|F'|=\beta_1(F)-|F|-1$.\footnote{$|X|$ denotes the number of connected components of $X$.}
In particular, if $F'$ connected, then $\beta_1(F')=\beta_1(F)-1$.
\end{obs}

\begin{convention}\label{Conv:Isotopy}
{\it Isotopies} of properly embedded surfaces and arcs are always taken to be {\it proper isotopies}.%
\footnote{For example, an isotopy of a spanning surface $F\subset S^3\setminus\inter L$ is a homotopy $h_t:F\to S^3\setminus\inter L$, $t\in I$,
 with $h_0(F)=F$ where each $h_t(F)$ is a spanning surface.}
Two properly embedded surfaces or arcs are {\it parallel} if they have the same boundary and are related by an isotopy which fixes this boundary.
\end{convention}

A spanning surface $F$ is (geometrically) {\it incompressible} if every simple closed curve in $F$ that bounds a disk in $S^3{\cut} (F\cup\nu L)$ also bounds a disk in $F$;%
\footnote{For compact $X,Y\subset S^3$, $X{\cut} Y$ denotes the metric closure of $X\setminus Y$. We describe a general construction under the additional assumptions that $X$ and $X\setminus Y$ are manifolds of the same dimension.
If, for each $x\in X\cap Y$, a generic local neighborhood $\nu x$ has the property that $Z\cap\nu x$ is connected or empty for each component $Z$ of $X\setminus Y$,
then $X{\cut} Y$ is the disjoint union of the closures in $S^3$ of the components of $X\setminus Y$ (hence, each component of $X{\cut} Y$ embeds naturally in $S^3$, although $X{\cut} Y$ as a whole need not). 
More generally, let $\{(U_\alpha,\phi_\alpha)\}$ be a maximal atlas  for $X$.  About each $x\in X$, choose a chart $(U_x,\phi_x)$ that is tiny enough that,  for each component $Z$ of $\overline{U_x}\setminus Y$ and a generic local neighborhood $\nu x$ of $x$ in $U_x$, $Z\cap\nu x$ is connected or empty; construct $\overline{U_x}{\cut} Y$ as above, denote the components of $U_x\cap(\overline{U_x}{\cut} Y)$ by $U_\alpha$, $\alpha\in\mathcal{I}_x$, and denote each natural embedding $f_\alpha:U_\alpha\to U_x$.  
Then $\bigcup_{x\in X}\{(U_\alpha,\phi_x\circ f_\alpha)\}_{\alpha\in\mathcal{I}_x}$ is an atlas for $X{\cut} Y$.
Gluing all the maps $f_\alpha$ yields a natural map $f:X{\cut} Y\to X\subset S^3$.}
$F$ is {\it $\partial$-incompressible} if every properly embedded arc in $F$ that is parallel into $\partial \nu L$ in $S^3{\cut} (F\cup\nu L)$ is also $\partial$-parallel in $F$. If $F$ is incompressible and $\partial$-incompressible, then $F$ is {\it essential}.  This geometric notion of essentiality is weaker than the algebraic notion of {\it $\pi_1$-essentiality}, which holds $F$ to be essential if inclusion $F\hookrightarrow S^3\setminus\inter L$ induces an injective map on fundamental groups and $F$ is not a m\"obius band spanning the unknot. A standard innermost circle argument shows:

\begin{fact}\label{F:Split}
If an incompressible surface $F$ spans a split link $L$, then the boundary of each connected component of $F$ lies in a single split component of $L$.
\end{fact}

\begin{prop}\label{P:BdryParallel}
Suppose $F_\pm$ are definite surfaces of opposite signs spanning a link $L$ and $F_+\cap F_-$ consists only of arcs, none of which are $\partial$-parallel in both $F_+$ and $F_-$. If $F_-$ is essential, then no arc of $F_+\cap F_-$ is $\partial$-parallel in $F_+$.
\end{prop}

\begin{proof}
If any arcs of $F_+\cap F_-$ are $\partial$-parallel in $F_+$, choose an outermost one, $\beta$; it is parallel into $\partial \nu L$ through a disk $X\subset F_+{\cut} F_-\subset S^3{\cut}(F_-\cup\nu L)$. Since $F_-$ is essential, $\beta$ is
  $\partial$-parallel in $F_-$. Yet, we assumed that no arc of $F_+\cap F_-$ is $\partial$-parallel in both $F_+$ and $F_-$.
  \end{proof}

\begin{prop}\label{P:Ess}
If an essential spanning surface $F$ contains an arc $\beta$ which is parallel in $S^3{\cut}(F\cup\nu L)$ to an arc $\alpha\subset\partial\nu L{\cut}\partial F$, then $\alpha$ is parallel in $\partial \nu L$ into $\partial F$.
\end{prop}
 
 \begin{proof}
 It suffices to prove this when $L$ is nonsplit and nontrivial.
Because $F$ is essential, 
$\beta$ is parallel in $F$ to an arc $\alpha'\subset \partial F$. The arcs $\alpha$ and $\alpha'$ are both parallel in $S^3\setminus\inter L$ 
to $\beta$, hence co-bound a disk in $S^3\setminus\inter L$, and therefore are parallel in $\partial\nu L$.
\end{proof}

\begin{figure}
\begin{center}
\includegraphics[width=\textwidth]{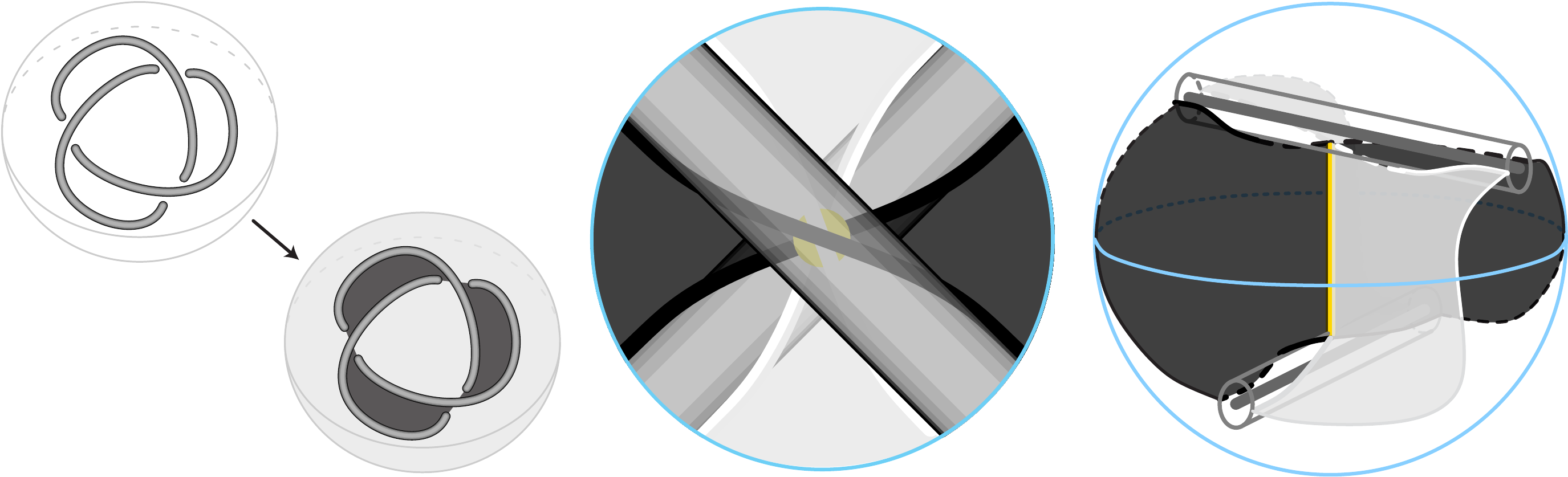}
\caption{Constructing checkerboard surfaces; close-ups near a vertical arc (yellow) at a crossing}
\label{Fi:Checkerboards}
\end{center}
\end{figure}

Given any diagram $D$ of $L$, one may a color the complementary regions of $D$ in the projection sphere $S^2$ black and white in checkerboard fashion.%
\footnote{That is, so that regions of the same color meet only at crossing points.}
See Figure \ref{Fi:Checkerboards}.
One may then construct spanning surfaces $B$ and $W$ for $L$ such that $B$ projects to the black regions, $W$ projects to the white, and $B$ and $W$ intersect in {\it vertical arcs} which project to the the crossings of $D$.  Call the surfaces $B$ and $W$ the {\it checkerboard surfaces} from $D$. 

\begin{fact}\label{F:CBEss}
Given a connected alternating diagram $D\subset S^2$, the following conditions are equivalent:
\begin{enumerate}[label=(\Roman*)]
\item $D$ is reduced.
\item Both checkerboard surfaces $B$ and $W$ from $D$ are essential.
\item No vertical arc of $B\cap W$ is separating in $B$ nor in $W$.
\end{enumerate}
\end{fact}

\begin{proof}
The implications (I) $\iff$ (III) and (II) $\implies$ (I) are straightforward.  For (I) $\implies$ (II), see e.g. Theorem 9.8 of \cite{au56}, Proposition 2.3 of \cite{menthis93}, Theorems 2-3 of \cite{oz06}, Theorem 3.15 of \cite{howiethesis}, or Theorem 1.1 of \cite{endess}.
\end{proof}

\begin{rem}\label{R:CBEss}
In particular, by Fact \ref{F:CBEss}, no vertical arc from a prime alternating diagram is $\partial$-parallel in either checkerboard surface.
\end{rem}

\subsection{Flype-related diagrams}\label{S:WellDefined}

\begin{definition}\label{D:Flype}
Suppose $D\subset S^2$ is a link diagram and $\gamma\subset S^2$ is a circle that intersects $D$ transversally in three points, exactly one of them a crossing point $c$; we call the circle $\gamma$ a {\it flyping circle} for $D$ and the arc of $\gamma{\cut} D$ with neither endpoint at $c$ a {\it flyping arc} for $D$. 
Up to mirror symmetry, $D$ and $\gamma$ appear as shown far left in Figure \ref{Fi:flype}; in particular, $D$ intersects the two disks of $S^2\setminus\inter\gamma$ in tangles $T_1$ and $T_2$. 
The move $D\to D'$ shown left in Figure \ref{Fi:flype} is called a {\it flype}: this move fixes the tangle $T_1$, switches which pair of strands cross within $\nu\gamma$, and changes $T_2$ by reflecting the underlying projection and reversing all crossing information. 
Two link diagrams on $S^2$ are {\it flype-related} if they are related by a sequence of flype moves and planar isotopy.%
\footnote{Every arc in $S^2{\cut} D$ with endpoints on adjacent edges of $D$ is a flyping arc.}%
\end{definition}

\begin{obs}\label{O:Flype}
If $D\to D'$ is a flype, then $D$ and $D'$ represent the same link $L$ and have the same number of crossings. Also, if $D$ and $D'$ are oriented then they have the same writhe.%
\footnote{The {\it writhe} $w_D$ is the number of positive crossings \PosCrScr~ in $D$ minus the number of negative crossings \NegCrScr. Equivalently, $w_D$ is the blackboard framing of $D$: if one embeds $L$ in $\nu S^2$ according to $D$ (see \textsection\ref{S:CSetup}, e.g.) and takes a co-oriented pushoff $\wh{L}$ in either direction normal to $S^2$, then $w_D=\lk(L,\wh{L})$.} 
Further, if $D$ is alternating (resp. prime), then so is $D'$.
\end{obs}

\begin{definition}\label{D:Entire}
If the tangle $T_1$ in Figure \ref{Fi:flype} contains no crossings, then (up to planar isotopy) the associated flype 
has the effect of changing $D$ to its mirror image and reversing all crossings.  We call such a flype an {\it entire flype}.  One may think of this move as leaving $D$ unchanged and viewing it from the opposite side of $S^2$ in $S^3$. Figure \ref{Fi:EntireFlype} shows an example. 
We regard two link diagrams $D$ and $D'$ as {\it equivalent}, denoted $D\equiv D'$, if they are related by planar isotopy and possibly an entire flype.\footnote{ Any entire flype $f:D\to D'$ extends to an orientation-reversing homeomorphism $S^2\to S^2$. Conversely, given any orientation-reversing homeomorphism $\iota:S^2\to S^2$, the diagram $D'$ obtained from $\iota(D)$ by reversing all crossing information is related to $D$ by planar isotopy and an entire flype.}
\end{definition}

\begin{figure}
\begin{center}
\includegraphics[width=.75\textwidth]{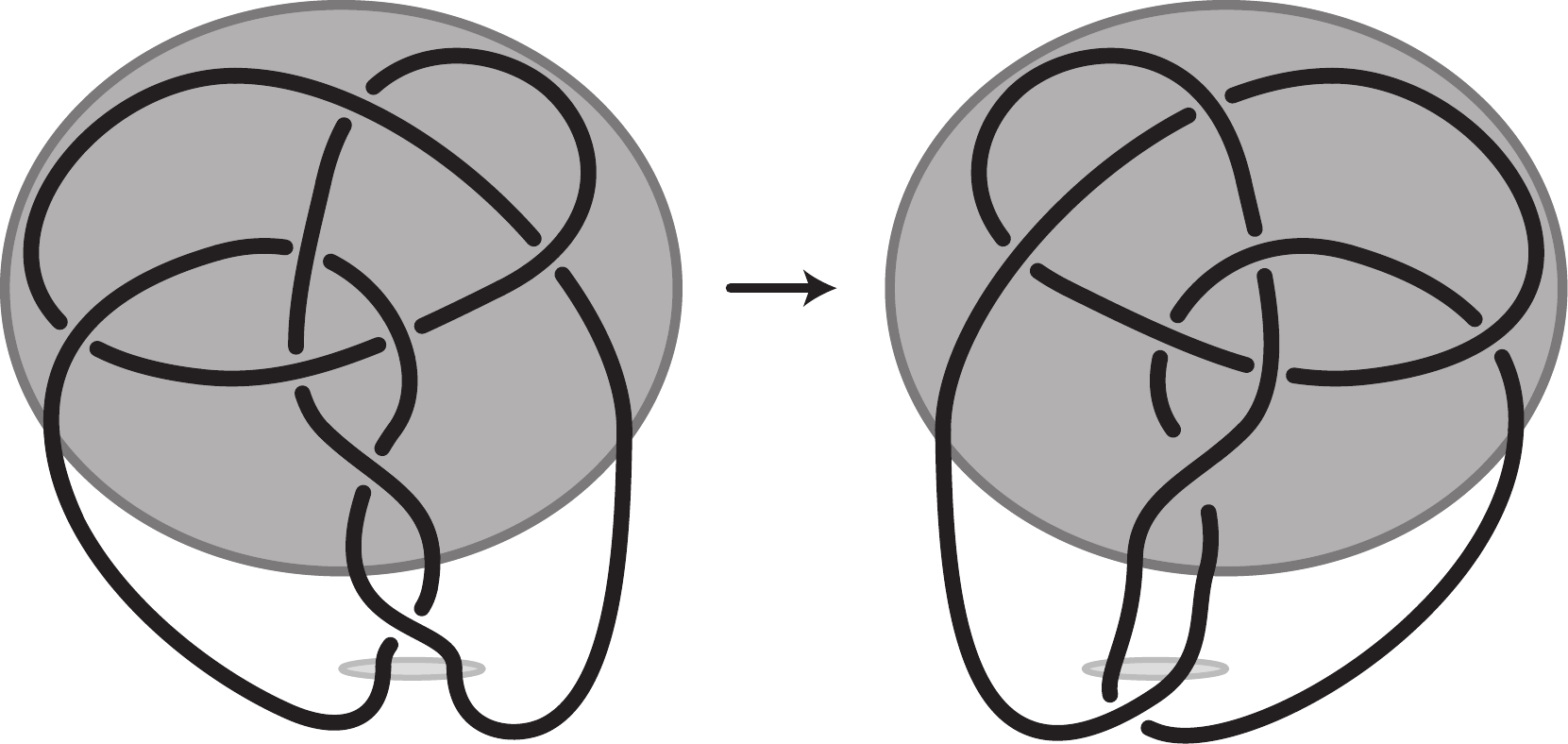}
\caption{An {\it entire flype} of a 
 diagram of the knot 
$8_{17}$}
\label{Fi:EntireFlype}
\end{center}
\end{figure}

\subsection{Definite surfaces}\label{S:GL}

Given a(n unoriented) spanning surface $F$ for an {\it oriented} link $L$, the {\it oriented euler number} $e(F,L)$ is the algebraic self-intersection number of the 
closed surface in the 4-ball obtained by pushing $\text{int}(F)$ into the 4-ball and capping off $\partial F$ with a Seifert surface in $S^3$ (using the orientation on $L$). 
The {\it unoriented euler number} of $F$, denoted $e(F)$, is the average value of $e(F,L)$ over all orientations of $L$.  
Alternatively, $-e(F)$ can be 
computed by summing the component-wise boundary slopes of $F$.%
\footnote{If $L_1,\hdots, L_m$ are the components of $\partial F$ and each $\wh{L_i}$ is a co-oriented pushoff of $L_i$ in $F$, then the boundary slope of $F$ along each $L_i$ equals the {\it framing} of $L_i$ in $F$, given by the linking number $\text{lk}(L_i,\wh{L_i})$, and $-e(F)=\sum_{i=1}^m\text{lk}(L_i,\wh{L_i})$. Further, denoting $\wh{L}=\bigcup_{i=1}^m\wh{L_i}$ and {\it total linking number} $\text{lk}(L)=\sum_{i< j}\text{lk}\left(L_i,{L_j}\right)$, we have $-e(F,L)=\text{lk}(L,\wh{L})=
-e(F)+2\text{lk}(L).$} 
%
We denote $-e(F)=s(F)$ and call this the {\it slope} of $F$.

Given surface $F$ spanning a link $L$, take $\nu F$ in the link exterior $S^3\cut\nu L$
with projection ${p}:{\nu}F\to F$
such that $p^{-1}(\partial F)=\nu F\cap\partial\nu L$. 
Denote the {frontier} $\wt{F}=\partial\nu F{\cut}\partial\nu L$ and\footnote{Thus, the restriction $p:\wt{F}\to F$ is a 2:1 covering map, $\wt{F}$ is orientable, and $\wt{F}$ is connected if and only if $F$ is connected and nonorientable.}
transfer map $\tau:H_1(F)\to H_1(\widetilde{F})$.%
\footnote{Given any 
$g\in H_1(F)$, choose an oriented multicurve $\gamma\subset\text{int}(F)$ representing $g$, 
denote $\widetilde{\gamma}=\partial({p}^{-1}(\gamma))$, and %
orient $\widetilde{\gamma}$ following $\gamma$; then, $\tau(g)=[\widetilde{\gamma}]$.} The {\it Gordon-Litherland pairing} \cite{gordlith}
\[\langle\cdot,\cdot\rangle:H_1(F)\times H_1(F)\to\Z\]
 is the symmetric, bilinear mapping 
given by the linking number
\[\langle a,b\rangle=\text{lk}(a,\tau(b)).\]
Any projective homology class $g=[\gamma]\in H_1(F)/\pm$ has a well-defined {\it self-pairing} $\lb g \rb=\langle g,g\rangle$; the {\it framing} of $\gamma$ in $F$ is given by $\frac{1}{2}\lb g \rb$.

Given an ordered basis $\mathcal{B}=(a_1,\hdots,a_m)$ for $H_1(F)$, the {\it Goeritz matrix} $G=(x_{ij})\in\Z^{m\times m}$ given by $x_{ij}=\langle a_i,a_j\rangle$ represents $\langle\cdot,\cdot\rangle$ with respect to $\mathcal{B}$.%
\footnote{That is, any $y=\sum_{i=1}^my_ia_i$ and $z=\sum_{i=1}^mz_ia_i$ satisfy\\
\centerline{$\langle y,z\rangle=\begin{bmatrix}y_1&\cdots&y_m\end{bmatrix}G\begin{bmatrix}
z_1&\cdots&
z_m
\end{bmatrix}^T.$}}
The signature of $G$ is called the {\it signature of $F$} and is denoted $\sigma(F)$.  Gordon-Litherland showed that the quantity $\sigma(F)-\frac{1}{2}s(F)$ is independent of $F$, and in fact equals the Murasugi invariant $\xi(L)$, which is the average signature of $L$ across all orientations. 

They also showed that $\sigma(F)$ is the signature of the 4-manifold obtained by pushing the interior of $F$ into the interior of the 4-ball $B^4$, while fixing $\partial F$ in $\partial B^4=S^3$, and taking the double-branched cover of $B^4$ along this surface. In particular, when $L$ is a knot, $\xi(L)$ is the signature of $L$ and of the 4-manifold obtained as a double-branched cover of $B^4$ along any perturbed Seifert surface. 

\begin{figure}
\begin{center}
\includegraphics[width=.25\textwidth]{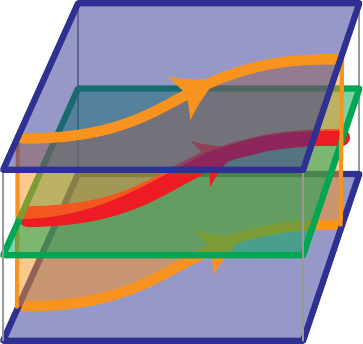}
\caption{An curve $\red{\gamma}$ on $\FG{F}$, with $\Orange{\wt{\gamma}=p^{-1}(\gamma)}$ on $\Navy{\wt{F}}$.}\label{Fi:GL}
\end{center}
\end{figure}

A spanning surface $F$ is
{\it positive-definite} if $\lla g,g\rra>0$ for all nonzero $g\in H_1(F)$ \cite{greene}.
Equivalently, $F$ is positive-definite iff 
$\sigma(F)=\beta_1(F)$.
{\it Negative-definite} surfaces are defined analogously. 

\begin{prop}\label{P:gordlith}
If $F_+$ and $F_-$ 
are positive- and negative-definite spanning surfaces for the same link $L$, then 
\[s(F_+)-s(F_-)=2(\beta_1(F_+)+\beta_1(F_-)).\]
\end{prop}

\begin{proof}
Definiteness implies that $\beta_1(F_\pm)=\pm\sigma(F_\pm)$, and \cite{gordlith} gives $s(F_\pm)=2(\sigma(F_\pm)-\xi(L))$. Thus:
\begin{align*}
s(F_+)-s(F_-)&=2(\sigma(F_+)-\xi(L))-2(\sigma(F_-)-\xi(L))\\
&=2(\beta_1(F_+)+\beta_1(F_-)).
\end{align*}
Note that this holds even if $L$ is non-prime, since slopes and signatures are additive under connect sum and split union.
\end{proof}

Greene used definiteness to characterize nonsplit alternating links:

\begin{T:greene1}
If $B$ and $W$ are positive- and negative-definite spanning surfaces for a link $L$ in a homology $\Z/2\Z$ sphere with irreducible complement, then $L$ is an alternating link in $S^3$, and it has an alternating diagram $D$ whose checkerboard surfaces are isotopic to $B$ and $W$. %
Moreover, $D$ is reduced if and only if neither $B$ nor $W$ has a projective homology class with self-pairing $\pm1$.
\end{T:greene1}

The converse of the first sentence of the theorem is also true:

\begin{fact}[Proposition 4.1 of \cite{greene}]\label{F:PGreene}
A connected link diagram is alternating if and only if its checkerboard surfaces are definite and of opposite signs.\footnote{Murasugi proved the forward direction for Tait's second conjecture \cite{mur87ii}.}
\end{fact}

\begin{convention}\label{Conv:+-}
If $D$ is a connected alternating link diagram, then its checkerboard surfaces $B,W$ are labeled such that $B$ is positive-definite and $W$ is negative-definite. Likewise for $D'$, $B'$, and $W'$. We may abbreviate this setup by denoting $D=D_{B,W}$ and $D'=D_{B',W'}$.
\end{convention}

Fact \ref{F:Split} and definite surfaces' incompressibility (Corollary 3.2 of \cite{greene}) extend 
Theorem 1.1 of \cite{greene} to {\it split} links in $S^3$ as follows:

\begin{fact}\label{F:SplitDef}
If $B$ and $W$ are positive- and negative-definite spanning surfaces for a link $L$, then $L$ has an alternating diagram $D\subset S^2$ such that, for each connected component $D_i$ of $D$, denoting the corresponding split component of $L$ by $L_i$,\footnote{This correspondence follows from Theorem 1 (a) of \cite{men84}.} 
each checkerboard surface of $D_i$ (ignoring the rest of $D$) is isotopic in $S^3\setminus \inter L_i$ to a connected component of $B$ or $W$.

In particular, $B$ and $W$ have the same number of connected components, and this equals the number of split components of $L$.
\end{fact}

Greene used Theorem 1.1 of  \cite{greene} and lattice flows to give a geometric proof of part of Tait's conjectures:

\begin{T:greene2}
All reduced alternating diagrams of a given link have the same crossing number and writhe.
\end{T:greene2}

We will give alternate proofs of both parts of this theorem.  The part about crossing number will follow from Theorem \ref{T:plumb} and will serve as an intermediate step in our proof of the flyping theorem. Later, we will deduce the part about writhe as a corollary of the flyping theorem, since flypes preserve 
writhe. 

\begin{rem}
Theorem 1.2 of \cite{greene} does not imply, a priori, that a reduced alternating diagram realizes the underlying link's crossing number, since it does not rule out the possibility that a non-alternating diagram could have fewer crossings. All existing proofs of this fact \cite{kauff,mur,this,tur} use the Jones polynomial.  
\end{rem}
 
\begin{problem}\label{Prob:Tait}
Give an entirely geometric proof that any reduced alternating diagram realizes the underlying link's crossing number.
\end{problem}

Thistlethwaite proved more generally that any {\it adequate} link diagram minimizes crossings. See Corollary 3.4 of \cite{this88} (or Corollary 5.14 of \cite{lick97} for Lickorish's simpler proof). Thistlethwaite then deduced that any reduced alternating {\it tangle diagram} minimizes crossings. See Definition 2.2 and Theorem 3.1 of \cite{this91}.

\begin{problem}\label{Prob:Adequate}
Prove Corollary 3.4 of \cite{this88} geometrically.
\end{problem}

\begin{problem}\label{Prob:This}
Give a geometric proof of Theorem 3.1 of \cite{this91}.
\end{problem}

\subsection{Intersections between definite surfaces}\label{S:Arcs}

Let $F$ and $F'$ be spanning surfaces for a link $L$ with $F\pitchfork F'$.
Orient $L$ arbitrarily, and orient $\partial F$ and $\partial F'$ so that each is homologous in $\nu L$ to $L$.  

\subsubsection{Standard and non-standard arcs}
Given an arc $\alpha$ of $F\cap F'$, take $\nu \partial\alpha$ in $\partial \nu L$, so that $\partial F$ and $\partial F'$ each intersect each disk of $\nu\partial\alpha$ in an arc, giving 
$i(\partial F,\partial F')_{\nu\partial\alpha}\in\{0,\pm2\}.$
Following Howie \cite{howie}, we call $\alpha$ {\it standard} if $i(\partial F,\partial F')_{\nu\partial\alpha}=\pm2$; we call $\alpha$ {\it non-standard} if $i(\partial F,\partial F')_{\nu\partial\alpha}=0$. One can compute the slope difference $s(F)-s(F')$ by counting the arcs of $F\cap F'$ with signs:
\begin{equation}\label{E:SlopeArcs}
s(F)-s(F')=i(\partial F,\partial F')_{\partial\nu L}=\sum_{\text{arcs }\alpha\text{ of }F\cap F'}i(\partial F,\partial F')_{\nu\partial\alpha}\\
\end{equation}

\begin{figure}
\begin{center}
\includegraphics[width=\textwidth]{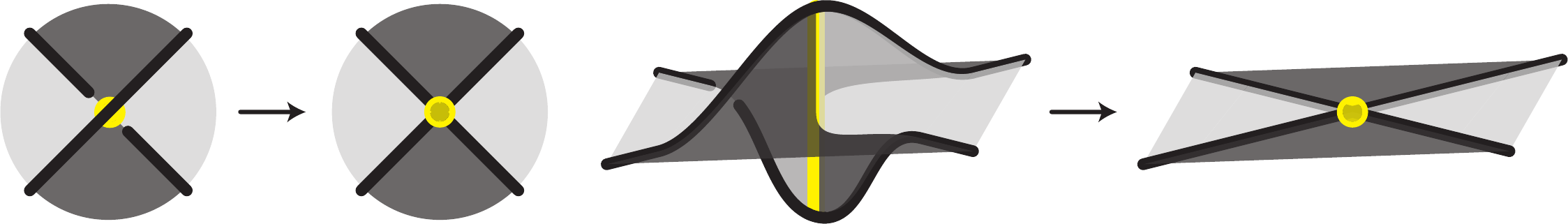}
\caption{Collapsing $S\cup T$ along a standard arc}
\label{Fi:ArcCollapse}
\end{center}
\end{figure}

\begin{procedure}\label{Proc:ArcCollapse}
Let $S,T$ be connected spanning surfaces for a link $L$ such that $S\cap T$ consists entirely of standard arcs and $|S\cap T|=\beta_1(S)+\beta_1(T)$. Extend $S,T$ through $\nu L$ so that $\partial S=L=\partial T$ and collapse $S\cup T$ along each arc of $\text{int}(S)\cap \text{int}(T)$. This gives a 2-sphere\footnote{This uses connectedness and the assumption that $|S\cap T|=\beta_1(S)+\beta_1(T)$.}  $Q$ on which $L$ collapses to a connected 4-valent graph; recovering crossing information gives a connected link diagram $D_{S,T}\subset Q$ whose checkerboard surfaces are $S$ and $T$. See Figure \ref{Fi:ArcCollapse}.
\end{procedure}

\begin{rem}\label{R:DetermineD}
In Procedure \ref{Proc:ArcCollapse}, the initial configuration of $S$ and $T$, up to isotopy of $S\cup T$ in $S^3\setminus\inter L$, uniquely determines the diagram $D$ (up to planar isotopy and perhaps an entire flype).
\end{rem}

\begin{prop}\label{P:DetermineD}
Suppose $F_\pm$ are positive- and negative-definite surfaces spanning a nonsplit link $L$ such that $F_+\cap F_-$ consists only of arcs $\alpha$ with $i(\partial F_+,\partial F_-)_{\nu\partial\alpha}=+2$. Then:
\begin{enumerate}[label=(\Alph*)]
\item $|F_+\cap F_-|=\beta_1(F_+)+\beta_1(F_-)$.
\item $F_\pm$ give an alternating diagram $D_{F_+,F_-}$ via Procedure \ref{Proc:ArcCollapse}.
\item If $F_+$ and $F_-$ are essential, then $D$ is reduced.
\end{enumerate}
\end{prop}

\begin{proof}
Fact \ref{F:SplitDef} implies that $F_+$ and $F_-$ are connected, so the hypotheses regarding $F_+\cap F_-$ and Proposition \ref{P:gordlith} imply
\[|F_+\cap F_-|=\frac{1}{2}|\partial F_+\cap\partial F_-|=\frac{1}{2}\left(s(F_+)-s(F_-)\right)=\beta_1(F_+)+\beta_1(F_-).\] 
Hence,  the pair $F_\pm$ determines a connected diagram $D$ of $L$ via Procedure \ref{Proc:ArcCollapse}. The checkerboard surfaces of $D$ are $F_\pm$, so $D$ is alternating by Fact \ref{F:PGreene}. 
Fact \ref{F:CBEss} implies (C).
\end{proof}

The proof of Lemma 3.4 of \cite{greene} shows:

\begin{fact}\label{F:GreeneCircle}
If $F_+\pitchfork F_-$ are definite surfaces of opposite signs spanning a link $L$, then any circle in $F_+\cap F_-$  bounds disks in both $F_\pm$.
\end{fact}

\begin{procedure}\label{Proc:Kill1}
Suppose $F_\pm$ are definite surfaces of opposite signs spanning a link $L$ with $F_+\pitchfork F_-$.
While fixing $F_-$, isotope $F_+$ according to the following hierarchy of moves:\footnote{That is, perform (1) whenever possible,  perform (2) whenever possible unless (1) is possible, and perform (3) whenever possible unless (1) or (2) is possible.
}
\begin{figure}
\begin{center}
\includegraphics[width=.8\textwidth]{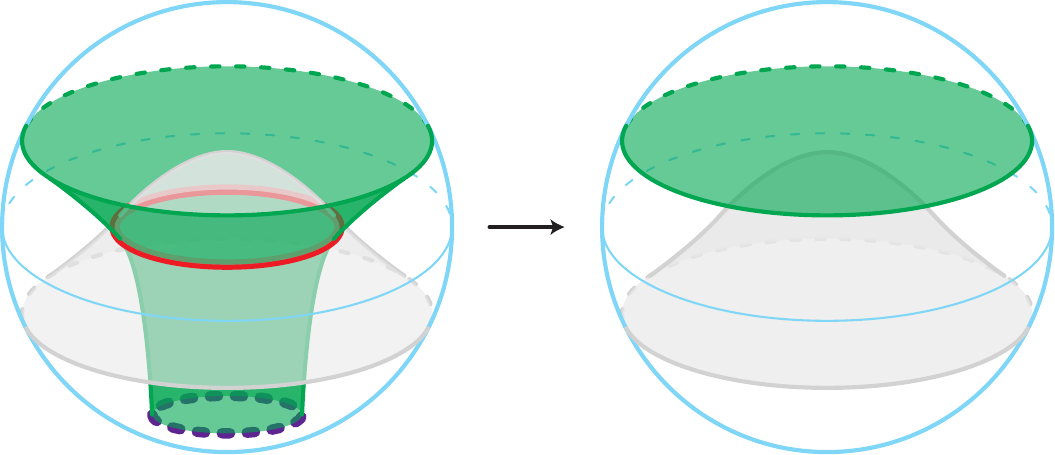}
\caption{Removing a $\red{\text{circle }\gamma}$ of intersection between positive- and negative-definite surfaces $\FG{F_+}$ and $\Gray{F_-}$.  The dashed purple circle bounds a disk in $F_+$.}
\label{Fi:RemoveCircle}
\end{center}
\end{figure}
\begin{enumerate}[label=(\arabic*)]
\item If $F_+\cap F_-$ contains circles, then (using Fact \ref{F:GreeneCircle}) choose an innermost one in $F_-$, and let $X_\pm$ denote the disks it bounds in $F_\pm$.  Using the irreducibility of $S^3\setminus L$, isotope $X_+$ past $X_-$ as shown in Figure \ref{Fi:RemoveCircle}. Meanwhile, fix $F_+$ away from $X_+$.
\item If any arc $\alpha$ of $F_+\cap F_-$ is parallel in $F_-{\cut} F_+$ into $\partial F_-$ \emph{and} in $F_+{\cut} F_-$ into $\partial F_+$, then remove $\alpha$ as shown in Figure \ref{Fi:FWMove}, top.
\item 
If arcs $\alpha_+\subset \partial F_+{\cut}\partial F_-$ and $\alpha_-\subset \partial F_-{\cut}\partial F_+$ are parallel in $\partial\nu L$, then push $F_+$ near $\alpha_+$ past $\alpha_-$ as in Figure \ref{Fi:FWMove}, bottom.  
\end{enumerate}
\end{procedure}
%
\begin{figure}
\begin{center}
\includegraphics[width=\textwidth]{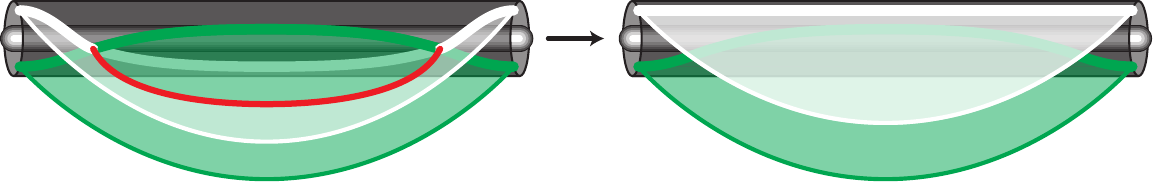}\\
\includegraphics[width=\textwidth]{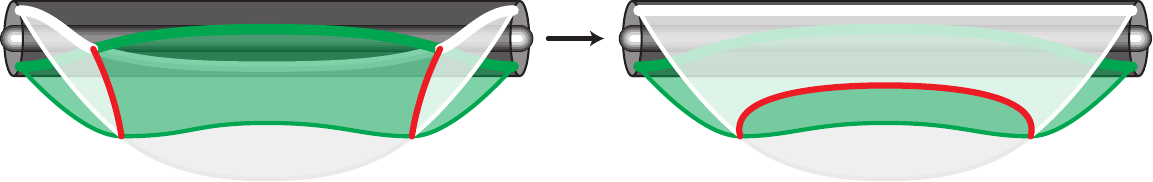}
\caption{Removing adjacent points of $\partial F_+\cap\partial F_-$ of opposite sign}
\label{Fi:FWMove}
\end{center}
\end{figure}

The reader may be puzzled as to why we include (2) in Procedure \ref{Proc:Kill1}, when the same move can be achieved by (3) followed by (1).  The reason, as we will see in Lemma \ref{L:--Only}, is that, when $F_+$ and $F_-$ are essential, parts (1) and (2) 
ensure that $F_+\cap F_-$ consists only of standard arcs, so (3) is ultimately superfluous; nevertheless, we find (3) useful in the leadup to the proof of Lemma \ref{L:--Only} in \textsection\ref{S:Arcs2}. This will allow us to strengthen Remark \ref{R:DetermineD} (see Theorem \ref{T:DBW}
) by analyzing how an isotopy of $F_+$ can affect the standard arcs of $F_+\cap F_-$.

\subsubsection{Isotopy of arcs in surfaces}\label{S:ArcsAbstract}
Given checkerboard surfaces $B,W$ from a prime alternating diagram of a link $L$ and an arbitrary essential positive-definite surface $F$ spanning $L$, we will later analyze how isotoping $F$ can affect $F\cap B$ and $F\cap W$. The next two lemmas anticipate this analysis.  See \textsection\ref{S:Technical2} for their proofs and those of all other lemmas that appear in \textsection\ref{S:Back} without their proofs.

For both lemmas, let $X$ be an abstract connected surface (not necessarily compact) with $\chi(X)<0$, and let $u, v\subset X$ be systems of properly embedded, non-$\partial$-parallel arcs
. Let $w$ denote the union of the arcs of $u$ that lie in $v$, and assume that $u\setminus w\pitchfork v$. 
 
\begin{lemma}\label{L:Arcs}
If an arc $u_1$ of $u\setminus w$ is isotopic in $X\setminus w$ to an arc $v_1$ of $v\setminus w$, then:
\begin{enumerate}[label=(\Alph*)]
\item Some compact disk $X_0$ of $X{\cut}(\alpha\cup \beta)$ is a bigon, triangle, or rectangle with $|\partial X_0\cap \alpha|=1=|\partial X_0\cap \beta|$: see Figure \ref{Fi:ArcsBigon}.
\item Using only the moves shown in Figure \ref{Fi:ArcsBigon}, both of which decrease $|\alpha\cap \beta|$, one can isotope $\alpha$ in $X\setminus w$ until $\alpha\cap \beta=\varnothing$.  
\item If $\alpha\cap \beta\neq\varnothing$ and no disk of $X\cut (\alpha\cup \beta)$ is a bigon, then each endpoint of $\alpha$ is incident to exactly one triangle of $X\cut (\alpha\cup \beta)$.
\end{enumerate}
\end{lemma}

\begin{figure}
\begin{center}
\labellist
\tiny \hair 4pt
\pinlabel{$\Yel{\boldsymbol{\alpha'}}$} [b] at 30 100
\pinlabel{$\Cyan{\boldsymbol{\alpha}}$} [b] at 150 100
\pinlabel{$\white{\boldsymbol{X_0}}$} [t] at 90 120
\endlabellist
\includegraphics[height=.2125\textwidth]{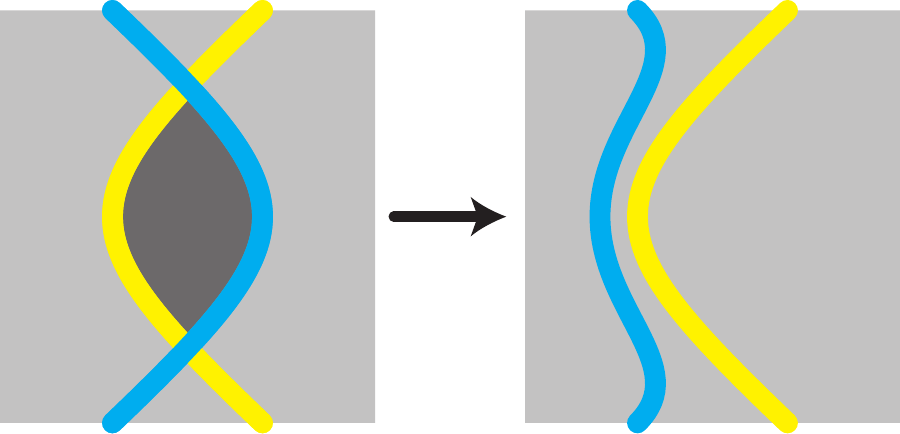}
\hfill
\labellist
\tiny \hair 4pt
\pinlabel{$\Yel{\boldsymbol{\alpha'}}$} [b] at 40 100
\pinlabel{$\Cyan{\boldsymbol{\alpha}}$} [b] at 130 100
\pinlabel{$\white{\boldsymbol{X_0}}$} [t] at 80 60
\endlabellist
\includegraphics[height=.2125\textwidth]{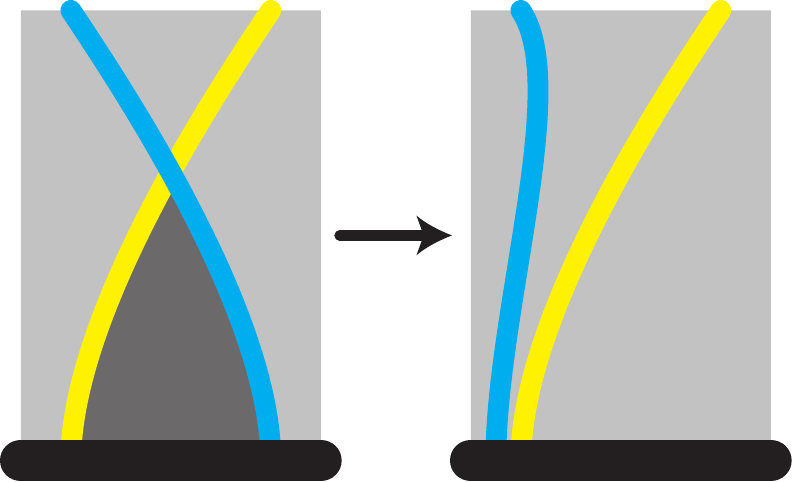}
\hfill
\labellist
\tiny \hair 4pt
\pinlabel{$\Yel{\boldsymbol{\alpha'}}$} [b] at 60 60
\pinlabel{$\Cyan{\boldsymbol{\alpha}}$} [b] at 125 60
\pinlabel{$\white{\boldsymbol{X_0}}$} [t] at 70 180
\endlabellist
\includegraphics[height=.2125\textwidth]{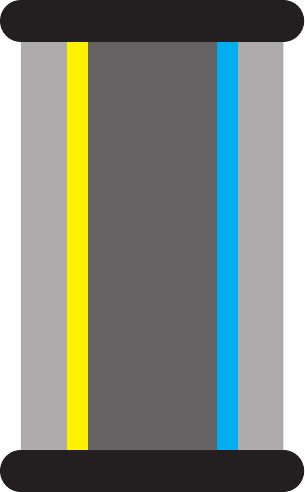}
\caption{Isotopic arcs $\alpha,\alpha'\subset X$ cut off a ``bigon,'' ``triangle,'' or ``rectangle'' $X_0\subset X\cut (\alpha\cup \alpha')$.}
\label{Fi:ArcsBigon}
\end{center}
\end{figure}

Now we consider $u$ and $v$ all together:
 
\begin{lemma}\label{L:ArcsAbstract}
Given $u,v,w$ as throughout \textsection\ref{S:ArcsAbstract}, if
 \begin{equation}\label{E:Triangle1}
\begin{matrix}
\text{each disk }X_0\subset X\cut(u\cup v)\text{ with }|\partial X_0\cap u|=1=|\partial X_0\cap v| \\
\text{is the sort of triangle or rectangle shown in Figure \ref{Fi:ArcsTriangle},}
\end{matrix}
\end{equation} 
 %
%
%
and if $u\setminus w$ and $v\setminus w$ are 
isotopic in $X\setminus w$,\footnote{
Situating the isotopy between $u$ and $v$ in $X\setminus w$ rather than in $X\cut w$ prohibits their endpoints from sliding across $w$.  An equivalent hypothesis is that $u$ and $v$ are related by a proper isotopy in $X$ which fixes $w$.} then $u=v=w$.
\end{lemma}

\begin{figure}
\begin{center}
\labellist
\tiny \hair 4pt
\pinlabel{$\Yel{\boldsymbol{v_2\subset v\setminus w}}$} at 166 180
\pinlabel{$\Yel{\boldsymbol{v\setminus w}}$} at 360 220
\pinlabel{$\Cyan{\boldsymbol{u_2\subset u\setminus w}}$} at 340 120
\pinlabel{$\FG{\boldsymbol{w}}$} at 65 100
\pinlabel{$\white{\boldsymbol{X_0}}$} at 445 70
\pinlabel{$\red{\boldsymbol{\lambda_0}}$} at 285 40
\pinlabel{$\red{\boldsymbol{\lambda_1}}$} at 445 40
\pinlabel{${\boldsymbol{x}}$} [c] at 227 14
\pinlabel{${\boldsymbol{y}}$} [c] at 335 13
\endlabellist
\includegraphics[height=.35\textwidth]{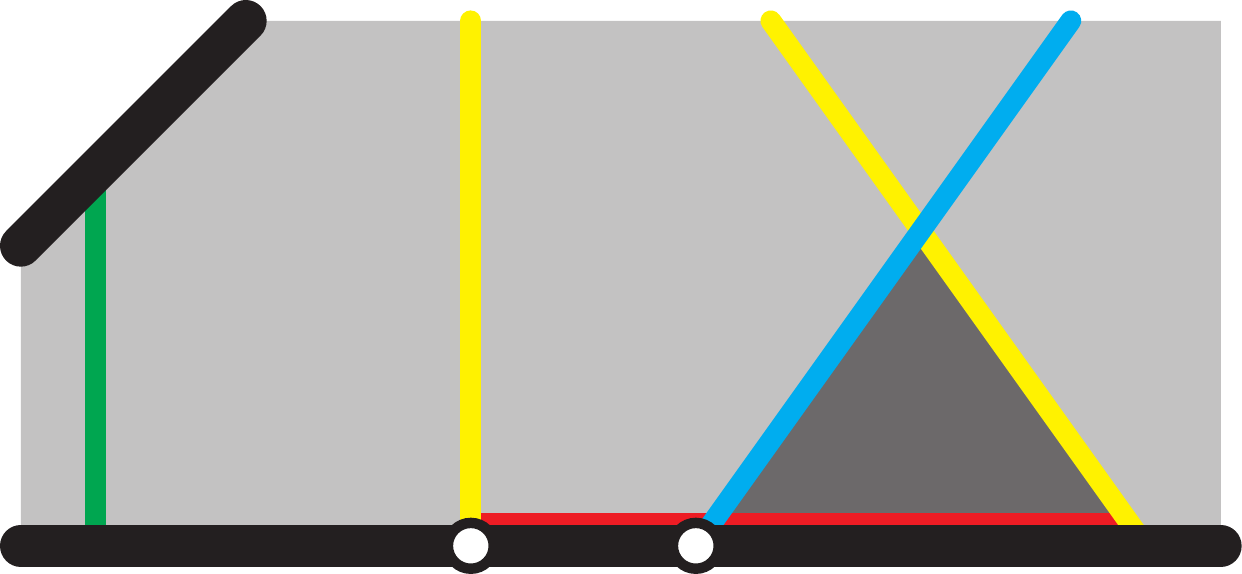}
\hfill
\labellist
\tiny \hair 4pt
\pinlabel{$\FG{\boldsymbol{w}}$} at 22 100
\pinlabel{$\FG{\boldsymbol{w}}$} at 125 100
\pinlabel{$\white{\boldsymbol{X_0}}$} at 73 120
\endlabellist
\includegraphics[height=.35\textwidth]{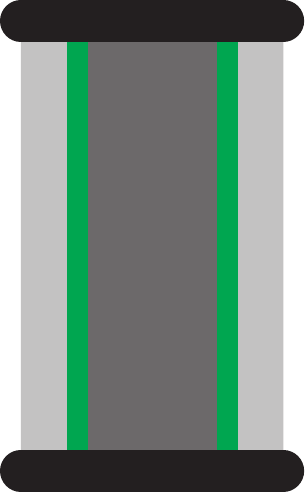}
\caption{Permissible triangles and rectangles of $X\cut (u\cup v)$ in condition (\ref{E:Triangle1}) of Lemma \ref{L:ArcsAbstract}}
\label{Fi:ArcsTriangle}
\end{center}
\end{figure}

\subsubsection{How definite surfaces of opposite signs intersect}
\begin{lemma}
\label{L:--Only}
Suppose $F_\pm$  are positive- and negative-definite surfaces spanning a link $L$, and $\alpha$ is an arc of $F_+\pitchfork F_-$. Then:
\begin{enumerate}[label=(\Alph*)]
\item $i(\partial F_+,\partial F_-)_{\nu\partial \alpha}\neq-2$. 
\item If $\alpha$ is nonseparating on $F_-$, then $i(\partial F_+,\partial F_-)_{\nu\partial \alpha}=2$. 
\item In particular, if $L$ is prime, both $F_\pm$ are essential, and $\alpha$ is not $\partial$-parallel in both $F_\pm$, then $i(\partial F_+,\partial F_-)_{\nu\partial \alpha}=2$.
\end{enumerate}
\end{lemma}

Lemma \ref{L:--Only} (C) implies that, when applying Procedure \ref{Proc:Kill1} to two {essential} surfaces $F_\pm$ whose boundary is prime, move (3) is never used.  This in turn implies:

\begin{fact}\label{F:Kill2}
Let $F_+\pitchfork F_-$ be essential definite surfaces of opposite signs spanning a prime link $L$. Apply Procedure \ref{Proc:Kill1} to $F_\pm$. Let $F'_+$ denote the surface obtained from $F_+$, and let $st_{F_+}$ and $st_{F'_+}$ 
denote the unions of the {\it standard} arcs of $F_+\cap F_-$ and of $F'_+\cap F_-$. Then:
\begin{enumerate}[label=(\Alph*)]
\item $st_{F_+}=st_{F'_+}=F'_+\cap F_-$, and
\item the alternating diagram $D_{F'_+,F_-}$ associated to $F'_+, F_-$ by Proposition \ref{P:DetermineD} (B) is determined by the isotopy class of $F_+\cup F_-$, regardless of how Procedure \ref{Proc:Kill1} is carried out. 
\end{enumerate}
\end{fact}

\begin{lemma}
\label{L:Bigon}
Suppose $F_\pm$ are essential definite surfaces of opposite signs spanning a prime link $L$ such that $F_+\cap F_-$ consists only of standard arcs.
If $\alpha_\pm\subset F_\pm{\cut} F_\mp$ are arcs which are parallel in $S^3\setminus\inter L$, then both endpoints of $\alpha_\pm$ lie on the same arc $v_0$ of $F_+\cap F_-$, and each $\alpha_\pm$ is parallel in $F_\pm\cut F_\mp$ into $v_0$.
\end{lemma}

\begin{theorem}\label{T:DBW}
Suppose $B,W$ and $B',W'$ are the checkerboard surfaces of prime alternating diagrams $D$ and $D'$ of a link $L$. Then $D\equiv D'$ 
if and only if $B$ is isotopic to $B'$ and $W$ is isotopic to $W'$.\footnote{A third equivalent condition, which we will not need, is that there is an orientation-preserving homeomorphism $f:S^3\to S^3$ that restricts to homeomorphisms $B\to B'$ and $W\to W'$ (any pairwise homeomorphism of $(S^3,L)$ that respects meridians on $\partial\nu L$ can be extended to an ambient isotopy).}
\end{theorem}

See \textsection\ref{S:Arcs2} for the proof.


\begin{cor}\label{C:DBW}
There is a bijective correspondence between equivalence classes of prime alternating link diagrams on $S^2$ and pairs of isotopy classes of essential definite surfaces of opposite signs spanning the same prime link in $S^3$.
\end{cor}

\begin{figure}
\begin{center}
\labellist
\tiny \hair 4pt
\pinlabel{${\partial}$-compress} [c] at 108 118
\pinlabel{twice} [c] at 108 103
\pinlabel{${\partial}$-compress} [c] at 253 118
\pinlabel{${\partial}$-compress} [c] at 396 118
\small \hair 4pt
\pinlabel{${B}$} [c] at 37 227
\pinlabel{${B''}$} [c] at 180 227
\pinlabel{${B'}$} [c] at 468 227
\endlabellist
\includegraphics[width=.9\textwidth]{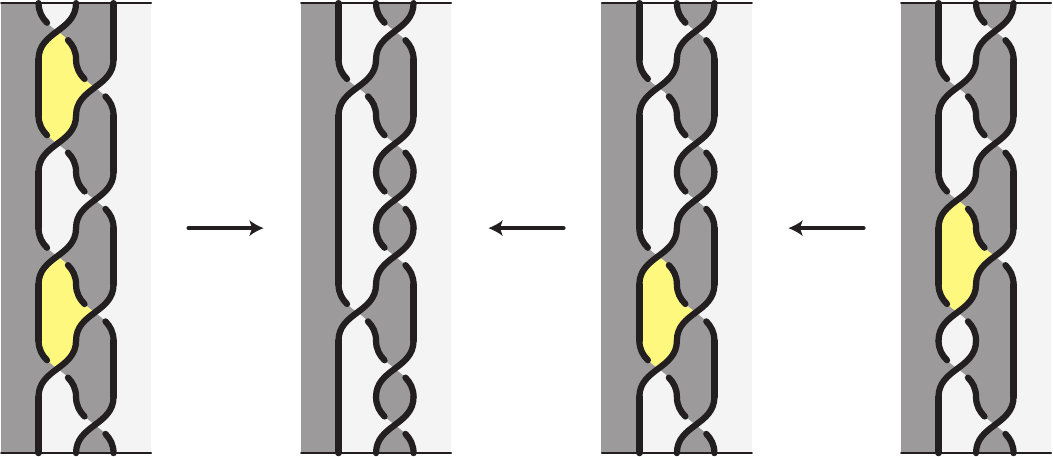}
\caption{Both closed-up surfaces $B$ and $B'$ are isotopic to $B''$ with two negative crosscaps attached.}
\label{Fi:Torus34}
\end{center}
\end{figure}

\begin{example}\label{Ex:DBW}
The diagrams $D=D_{B,W}$ and $D'=D_{B',W'}$ of the $(3,4)$ torus knot obtained by closing the braid diagrams shown left and right in Figure \ref{Fi:Torus34} are distinct. Yet, their checkerboard surfaces are isotopic.  By symmetry, it suffices to check this for $B$ and $B'$. Indeed, each admits a sequence of two positive meridinal $\partial$-compressions\footnote{Defined in \cite{ak}, this is a $\partial$-compression that takes a spanning surface to a spanning surface; it corresponds to de-summing a \MobPos.} (each $\partial$-compression disk comes from a yellow region in the figure) to the black checkerboard surface $B''$ shown center-left in the figure, hence is isotopic to $B''\natural \MobNeg\natural\MobNeg$.
\end{example}

\begin{question}\label{Q:DBW}
To what classes of link diagrams does Theorem \ref{T:DBW} extend?
\end{question}

\subsection{Generalized plumbing}\label{S:plumb}

\begin{figure}
\labellist
\small\hair 4pt
\pinlabel {$\boldsymbol=$} [l] at 160 25
\pinlabel {$\boldsymbol*$} [l] at 320 25
\endlabellist
\centerline{\includegraphics[width=\textwidth]{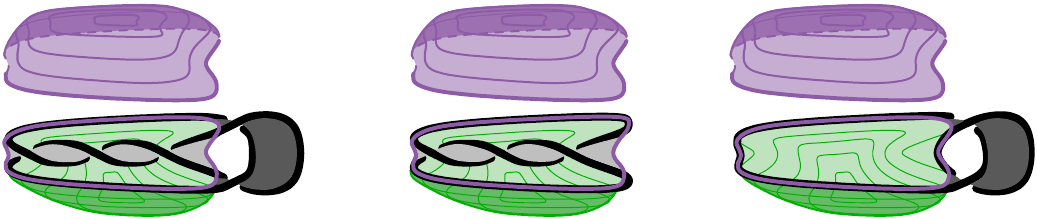}}
\caption{A \violet{plumbing cap } and its \FG{shadow } for a spanning surface, 
and the associated de-plumbing.}\label{Fi:plumbex}
\end{figure}

\subsubsection{Basic definitions}
Let $F$ be a spanning surface for a nonsplit link $L$.
A {\it plumbing cap} for $F$ is an embedded disk $V\subset S^3\setminus\inter L$ with $V\cap(F\cup\partial\nu L)=\partial V$ such that:
\begin{itemize}
\item 
$\partial V$ bounds a disk $\wh{U}\subset F\cup \nu L$, 
\item  $\wh{U}\cap F$ is a disk $U$ called the {\it shadow} of $V$, and
\item denoting the 3-balls of $S^3{\cut}(\wh{U}\cup V)$ by  $Y_1,Y_2$, neither subsurface $F_i=F\cap Y_i$ is a disk.
\end{itemize}
If the first two properties hold but the third fails,
we call $V$ a {\it fake plumbing cap} for $F$; we still call $U$ the shadow of $V$.

The decomposition $F=F_1\cup F_2$ is a {\it plumbing decomposition} or {\it de-plumbing} of $F$ along $U$ and $V$, denoted $F=F_1*F_2$. See Figure \ref{Fi:plumbex}. The reverse operation, in which one glues $F_1$ and $F_2$ along $U$ to produce $F$, is called {\it generalized plumbing} or {\it Murasugi sum}.

If $V$ is a plumbing cap for $F$ with shadow $U$, then one can construct another spanning surface $F'=(F{\cut} U)\cup V$; we call the operation of changing $F$ to $F'$ {\it re-plumbing}. 
See Figure \ref{Fi:re-plumb}.  Call the analogous operation along a fake plumbing cap a {\it fake re-plumbing}; this is an isotopy move.
Two spanning surfaces are {\it plumb-related} if there is sequence of re-plumbing and isotopy moves between them.

\begin{figure}
\begin{center}
\includegraphics[width=.8\textwidth]{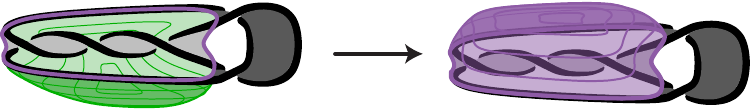}
\caption{Re-plumbing a spanning surface replaces a plumbing \FG{shadow } with its \violet{cap}\color{black}.}\label{Fi:re-plumb}
\end{center}
\end{figure}


\subsubsection{Re-plumbing in $S^3$ and isotopy through $B^4$}\label{S:4Ball}
%
\begin{prop}\label{P:B4}
Let $L$ be a link in $S^3=\partial B^4$, let $F_1,F_2\subset S^3$ be compact embedded surfaces with $\partial F_i=L$, and let $F'_i$ be properly embedded surfaces in $B^4$ obtained by perturbing $\text{int}(F_i)$, while fixing $\partial F_i=L\subset S^3$. If $F_1\setminus\inter L$ and $F_2\setminus\inter L$ are plumb-related, then:
\begin{enumerate}[label=(\Alph*)]
\item $F'_1$ and $F'_2$ are related by an ambient isotopy of $B^4$ which fixes $S^3\supset L$ pointwise. 
\item There is an isomorphism $\phi:H_1(F_1)\to H_1(F_2)$ satisfying $\lla \alpha,\beta \rra_{F_1}=\lla \phi(\alpha),\phi(\beta)\rra_{F_2}$ for all $\alpha,\beta\in H_1(F_1)$.
\item $F_1$ and $F_2$ have the same slope: $s(F_1)=s(F_2)$.\footnote{The component-wise slopes may differ, but their sums will be equal.}
\item If $F_1$ is definite, then $F_2$ is definite and of the same sign.
\item In particular, if $F_1$ is a checkerboard surface from a reduced alternating diagram, then so is $F_2$.
\end{enumerate}
\end{prop}

\begin{proof}
Part (A) follows from the observation that any re-plumbing move can be realized as an isotopy through $B^4$ in which one fixes the entire surface except the plumbing shadow and pushes the plumbing shadow through $B^4$ to the plumbing cap.   Part (B) follows from (A) and Theorem 3 of \cite{gordlith}, which states that the Gordon-Litherland pairing on $F_i$ corresponds to the intersection pairing on the 2-fold branched cover of $B^4$ with branch set $F'_i$. Parts (C)-(E) then follow immediately, using \cite{greene}.
\end{proof}

\begin{figure}
\labellist
\small\hair 4pt
\pinlabel {Tangle 2} [c] at 714 310
\pinlabel {Tangle 1} [c] at 714 30
\pinlabel {\reflectbox{Tangle 2}} [c] at 1290 310
\pinlabel {Tangle 1} [c] at 1290 30
\endlabellist
\begin{center}
\includegraphics[width=\textwidth]{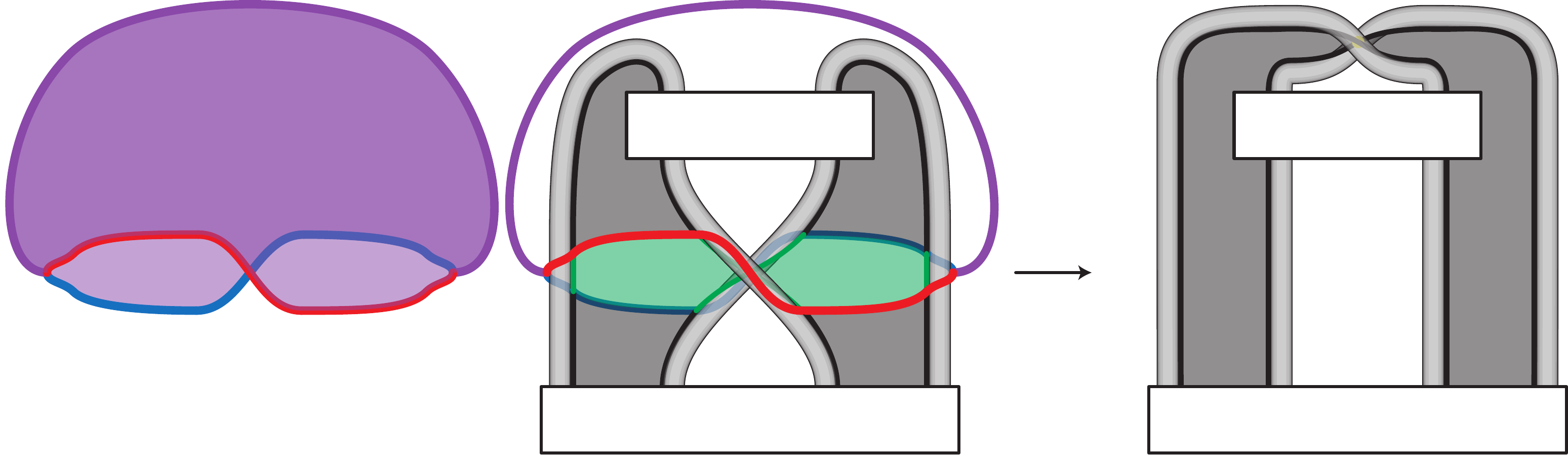}
\caption{A flyping cap and the associated flype move}\label{Fi:FlypeCap}
\end{center}
\end{figure} 

\subsubsection{Flyping caps}
Let $D$ be a prime alternating link diagram with checkerboard surfaces $B,W$.  Say that a plumbing cap $V$ for $B$ is a {\it flyping cap} (relative to $W$) if $V$ appears as in Figure \ref{Fi:FlypeCap}, left-center. 
There is then a corresponding flype move, as shown in Figure \ref{Fi:FlypeCap}. 
(The resulting link diagram might be equivalent to $D$.) 

\begin{prop}\label{P:FlypeReplumb}
Given $D=D_{B,W}$, let $V$ be an flyping cap for $B$, $D\to D'=D_{B',W'}$ the flype move corresponding to $V$, and $B''$ the surface obtained by re-plumbing $B$ along $V$. Then $B'$ and $B''$ are isotopic, as are $W'$ and $W$. Hence, $D'\equiv D_{B'',W}$.
\footnote{An analogous statement holds for flyping caps for $W$.}
\end{prop}

\begin{proof}
Figure \ref{Fi:FlypeReplumb} shows the isotopies $B''\to B'$ and $W\to W'$.
\end{proof}

Conversely, if $D\to D'$ is a flype move 
along a circle $\gamma\subset S^2$, then $B$ (or $W$) has a flyping cap $V$ with $V\cap W\subset \nu \gamma$ (resp. $V\cap B\subset \nu \gamma$).

\section{Crossing ball setup and isotopy moves}\label{S:MenascoH}

Given a prime alternating diagram $D$ of a link $L$ and an arbitrary essential positive-definite $F$ surface spanning $L$, \textsection\ref{S:MenascoH} uses the crossing ball structures introduced in \cite{men84} to define and study a hierarchy of isotopy moves on $F$ relative to $D$.

\subsection{Crossing ball setup}\label{S:CSetup}

\begin{figure}
\begin{center}
\includegraphics[width=\textwidth]{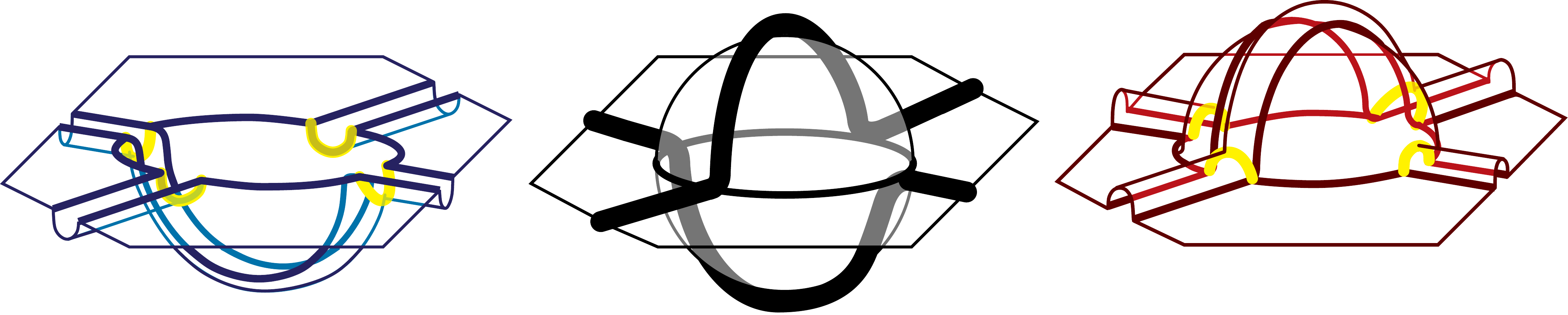}
\caption{A {link} near a {crossing ball} with $\Navy{S_-}$ and $\red{S_+}$.}
\label{Fi:crossingballparts}
\end{center}
\end{figure}

Here is the setup for all of \textsection\textsection\ref{S:MenascoH}-\ref{S:Main}, \ref{S:Technical4}-\ref{S:Technical5}:

\begin{itemize}
\item $D$ is a prime alternating diagram of a link $L$ with crossings $c_1,\hdots,c_n$;
$\pi:\nu S^2\to S^2$ denotes projection;%
\footnote{The assumption that $D$ is prime and alternating implies that $D$ is reduced and, by Theorem 1 (b) of \cite{men84}, that $L$ is prime, hence nontrivial and nonsplit.} and (for \textsection\ref{S:CSetup} only) $Y_\pm$ are the 3-balls of $S^3{\cut} S^2$.
\item Insert {disjoint} closed {\it {crossing ball}s} $C_t$ in $\inter {S^2}$, with each $C_t$ centered at $c_t$. Denote $C=\bigsqcup_{t}C_t$,
and embed $L$ in $(S^2\setminus\text{int}(C))\cup\partial C$ by perturbing the arcs of $D\cap C$ following the crossing data, 
so that $L$ appears near each $C_t$ as shown center in Figure \ref{Fi:crossingballparts}.
For \textsection\ref{S:CSetup} only, call the {arc}s of $L\cap S^2$ and $L\cap\partial C\cap Y_\pm$  {\it {edges}}, {\it {overpasses}} and {\it {underpasses}}, respectively. 
\item Take $\nu L\subset\inter {S^2}$ with projection $\pi_L:\nu L\to L$. Denote the two 3-balls of $S^3{\cut} (S^2\cup C\cup \nu L)$ by $H_\pm$, so that each $\text{int}(H_\pm)=Y_\pm\setminus(\nu L\cup C)$.
Also denote $\partial H_\pm=S_\pm$. See Figure \ref{Fi:crossingballparts}. 
\item Denote each {\it vertical arc} $v_t=\pi^{-1}(c_t)\cap C_t\setminus\inter {L}$; let $v=\bigcup_tv_t$.  
\item For each edge $e\subset L$, call the cylinder $E=\pi_L^{-1}(e)\cap\partial\nu L$ an {\it edge} (of $\partial \nu L$)%
; the rectangles $E_\pm=E\cap Y_\pm$ are its {\it top} and {\it bottom}. For each over/underpass $e_\pm$ of $L$, call $E_\pm=\pi_L^{-1}(e_\pm)\cap\partial\nu L$ an {\it over/underpass} (of $\partial \nu L$); $E_+\cap Y_+$ and $E_+{\cut} Y_+$ are the {\it top} and {\it bottom} of the overpass, while $E_-\cap Y_-$ and $E_-{\cut} Y_-$ the {\it bottom} and {\it top} of the underpass.
Say that an edge $E$ and a crossing ball $C_t$ are {\it incident} if they intersect; say that two edges (resp. crossing balls) are {\it adjacent} if there is a crossing ball (resp. an edge) incident to both of them.\footnote{Note that any edge or crossing ball is therefore said to be adjacent to itself.}
Assume that $\pi_L^{-1}(\partial(L\cap \partial C))=\partial\nu L\cap \pi^{-1}(\partial C\cap S^2)$: then these meridia, highlighted yellow in Figure \ref{Fi:crossingballparts},
cut $\partial \nu L$ into its edges, overpasses, and underpasses.
\item For each $t$, $\partial C_t\cap S^2\setminus\inter {L}$ consists of four arcs, two $\beta_1,\beta_2$ in black regions of $S^2\setminus D$ and two $\omega_1,\omega_2$ in white.  A core circle in $\alpha\cup\beta\cup (\partial \nu L\cap C_t)$ bounds a disk $B_t\subset C_t$ such that $\pi(B_t)$ is disjoint from the white regions of $S^2\setminus D$ and intersects $D$ only at $c_t$. 
Likewise, $\omega_1,\omega_2$ yield a disk $W_t\subset C_t$; note that $B_t\cap W_t=v_t$.
A properly embedded disk $X\subset C_t\setminus\inter {L}$ that contains $v_t$  is called a {\it positive} (resp. {\it negative}) {\it crossing band} if there is an isotopy of $(X,\partial X\cap\partial \nu L,\partial X\cap\partial C_t)$ through $(C_t,\partial \nu L,\partial C_t)$
to $B_t$ (resp. $W_t$). See Figure \ref{Fi:Checkerboards}.
\item 
Denote the union of the black and white regions of $S^2\setminus \text{int}(C\cup\nu L)$ by $\wh{B}$ and $\wh{W}$. Then $B=\wh{B}\cup\bigcup_tB_t$ and $W=\wh{W}\cup\bigcup_tW_t$
are the {\it checkerboard surfaces} from $D$. Note that $B\cap W=v$. 
\item Denote each:
\footnote{The $n$-punctured sphere $S_0$ equals $\wh{B}\sqcup \wh{W}=S_+\cap S_-.$}%
\footnote{$S_{\pm E}$ respectively consist of the upper/lower halves of all edges (of $\partial\nu L$).}%
\footnote{Each component of $S_{+B}$ is a disk comprised of a disk of $\wh{B}$ together with the top halves of all incident edges; similarly for $S_{-B}$ and $S_{\pm W}$.}%
\footnote{The top of the overpass at $C_t$ and the two disks of $\partial C_t\cap S_+$ comprise $C^+_t$. }
\begin{align*}
S_0&=S^2\setminus\text{int}(C\cup\nu L);\\
S_{\pm E}&=S_\pm\cap \partial\nu L{\cut}(\pi^{-1}\circ\pi(C));\\
S_{\pm B}&=\wh{B}\cup S_{\pm E} \text{ and }%
S_{\pm W}=\wh{W}\cup S_{\pm E}; \text{ and }\\
C^\pm_t&=S_\pm\cap(\pi^{-1}\circ\pi(C_t))\text{ with }
C^\pm=\textstyle{\bigcup_tC^\pm_t}.
\end{align*}
\item $F$ is an essential positive-definite spanning surface for $L$.\footnote{$F$ is connected because $L$ is prime, hence nonsplit; recall Fact \ref{F:Split}.}
Each crossing band in $F$ contains an arc of $v$; denote the union of such arcs by ${v_F}$.
Let $D_{F,W}$ denote the diagram that $F,W$ determine via Theorem \ref{T:DBW}.
 \end{itemize}

\begin{rem}\label{R:CSetup}
The combinatorial setup established above can also be constructed from $B,W$ (assuming only that these are essential definite surfaces of opposite signs spanning a prime link $L$ and that $B\cap W=v$ is comprised of standard arcs) by taking $C$ to be a regular neighborhood of $v$ in $S^3\cut\inter L$.
\end{rem}

\subsection{Fair position, flyping circles, and push-through moves}\label{S:Fair}

\begin{definition}\label{D:Fair}
$F$ is in {\it fair position} if:\footnote{Later, we define increasingly restrictive $k$-{\it good} positions for $F$,  $k=0,1,\hdots,10$, and 0-good position will be equivalent to fair position.}
\begin{enumerate}[label=(\alph*)]
\item $F\cap W$ is comprised entirely of standard arcs;
\item $F$ is transverse in $S^3$ to $B$, $W$, $\partial C$, and $v\setminus v_F$;
\item $\partial F$ is transverse on $\partial\nu L$ to each meridian;
\item whenever $\partial F\cap C_t\neq \varnothing$, $F\cap C_t$ is a crossing band; 
\item no arc of $F\cap \partial C \cap S_\pm$ is parallel in $\partial C$ into $\partial C\cap \partial{S_0}$; 
\item $B\cup W$ cuts each component of $F\cap C$ into disks;
\item 
each crossing band in $F$ is disjoint from $S_+$; and
\item $S_+\cup S_-$ cuts $F$ into disks.
\end{enumerate}
\end{definition}

\begin{lemma}\label{L:Fair}
 $F$ can be isotoped into fair position.
 \end{lemma}
 
  The proof of Lemma \ref{L:No12Iff} appears in \textsection\ref{S:Technical4}, as do the proofs of all lemmas that appear in \textsection\ref{S:MenascoH} without their proofs.

\begin{lemma}\label{L:FairP}
If $F$ is in fair position, then:
\begin{enumerate}[label=(\Alph*)]
\item balls comprise $(C\setminus\inter L){\cut} F$ and 
${H_\pm}{\cut} F$;
\item arcs comprise $\partial F\cap S_\pm$, $F\cap {S_0}$, and $F\cap \partial C\cap S_\pm$; and
\item each component $X$ of $F\cap C$ is either a crossing band or a {\it saddle disk} as in Figure \ref{Fi:CBandsSaddle}.\footnote{In particular, $X$ must intersect each of $B$ and $W$ in a single arc. Namely, if $X$ is a crossing band in a crossing ball $C_t$, then $X\cap B=v_t=X\cap W$, and if $X$ is a saddle disk, then $\beta=X\cap B$ and $\omega=X\cap W$ appear as in Figure \ref{Fi:CBandsSaddle}, right.}
\end{enumerate}
\end{lemma}

\begin{figure}
\begin{center}
\includegraphics[width=.3\textwidth]{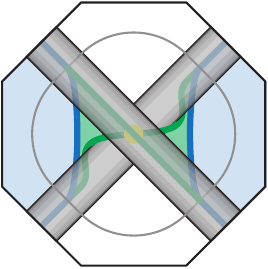}\hfill
\includegraphics[width=.3\textwidth]{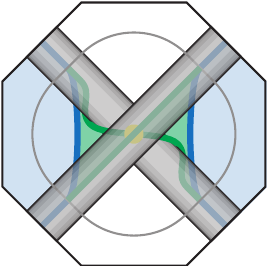}\hfill
\labellist
\tiny\hair 4pt
\pinlabel {$\white{\boldsymbol{\omega}}$} [l] at 38 72
\pinlabel {${\boldsymbol{\beta}}$} [l] at 62 45
\endlabellist
\includegraphics[width=.3\textwidth]{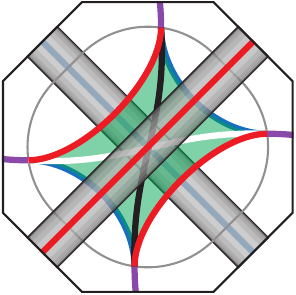}
\caption{Positive (left) and negative (center) crossing bands and a saddle disk (right) in a surface $F$ in fair position.}
\label{Fi:CBandsSaddle}
\end{center}
\end{figure}

  \begin{notation}\label{N:Fair}
Assume that $F$ is in fair position.
\begin{itemize}
\item Each circle $\gamma\subset F\cap S_\pm$ bounds a disk $F_\gamma\subset F\cap H_\pm$.
\item The arcs of $v\cup (F\cap W)$ 
induce a cell decomposition of $W$ 
under which we may refer to {\it bigons}, {\it triangles}, etc. 
\end{itemize}
\end{notation} 

\begin{figure}
\begin{center}
\labellist
\tiny\hair 4pt
\pinlabel {$\violet{\boldsymbol{\omega}}$} [l] at 200 415
\endlabellist
\includegraphics[width=.8\textwidth]{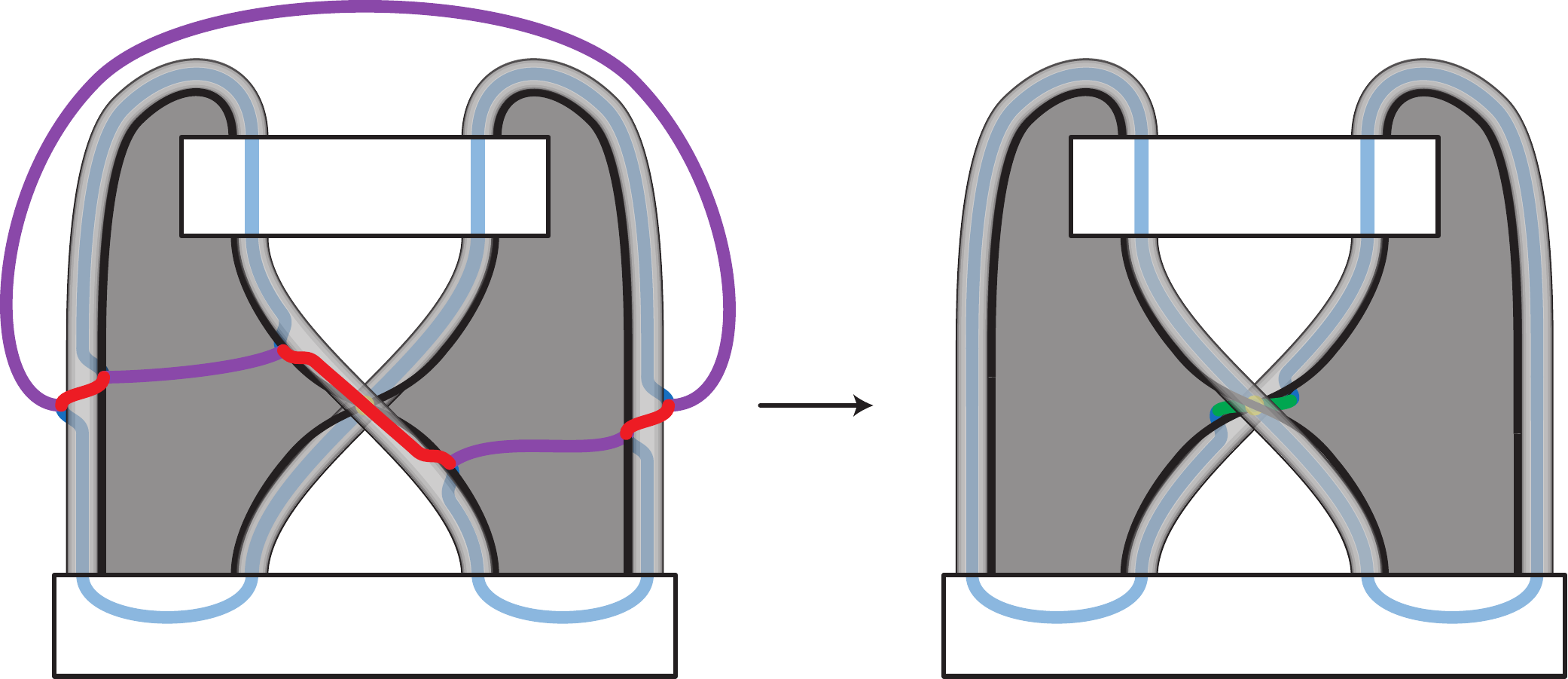}
\caption{A flyping circle $\violet{\omega}$ 
gives a flype-type re-plumbing.}
\label{Fi:FlypeCircle}
\end{center}
\end{figure}

\begin{definition}\label{D:FlypingArcCircle}
A {\it flyping circle} \underline{for $F$} is a circle $\gamma$ of $F\cap S_+$ that appears as in Figure \ref{Fi:FlypeCircle}, left, where $\pi(\gamma)$ is a flyping circle for $D$. Then the arc $\omega=\gamma\cap \wh{W}$ is a {\it flyping arc} \underline{for $F$}, 
and there is a {\it flype-type re-plumbing} move $F\to F'$ as shown in Figure \ref{Fi:FlypeCircle}, where $F'$ is in fair position and $F'\cap S_+=F\cap S_+\setminus\gamma$.\footnote{Because flyping circles for $F$ lie in $S_+$ and those for $D$ lie in $S^2$, we will find no need to distinguish these explicitly in the sequel.}
\end{definition}

\begin{lemma}\label{L:FlypingCircles}
If $F$ is in fair position and $F\cap S_+$ contains only flyping circles, then $D_{F,W}$ is related to $D$ by a sequence of flypes that each preserve the isotopy class of $W$.
\end{lemma}

 \begin{prop}\label{P:CBandPos}
If $F$ is in fair position, then every crossing band in $F$ is positive (as shown left in Figure \ref{Fi:CBandsSaddle}).
\end{prop}

\begin{proof}
If $F$ has a negative crossing band, say at $C_t$, then $v_t$ is a non-standard arc of $F\cap W$ violating condition (a) of Definition \ref{D:Fair}.
\end{proof}

Proposition \ref{P:CBandPos} and condition (g) in Definition \ref{D:Fair} require each crossing band in $F$ to appear   as in Figure \ref{Fi:CBandsSaddle}, left. This creates an asymmetry between $F\cap S_-$ versus $F\cap S_+$ which will be strategically useful. (We will sharpen this asymmetry further when we define Moves \ref{M:7}-\ref{M:9}.) The idea is that pushing $F\cap (S_+\cup S_-)$ into $S_-$ near crossing bands (where $F$ ``looks nice") increases the likelihood that the circles of $F\cap S_+$ will enable simplifying moves on $F$.  This strategy will eventually bear fruit in the form of the re-plumbing Move \ref{M:10}. To get a sense of this, consider:

\begin{figure}
\begin{center}
\labellist
\tiny \hair 4 pt
\pinlabel {$\Navy{\boldsymbol{\beta}}$} at 500 740
\pinlabel {$\violet{\boldsymbol{\omega}}$} at 585 695
\endlabellist
\includegraphics[width=.495\textwidth]{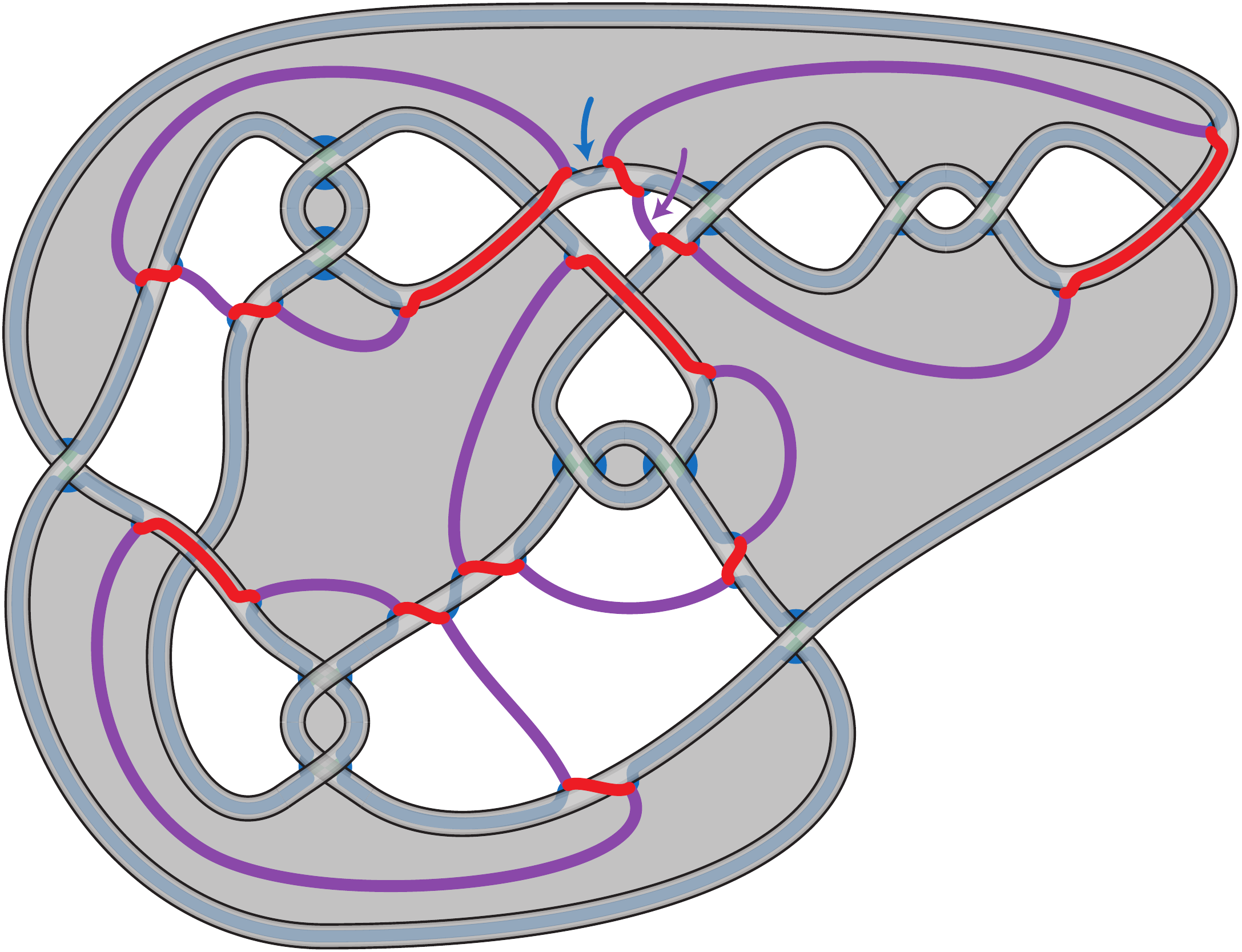}
\includegraphics[width=.495\textwidth]{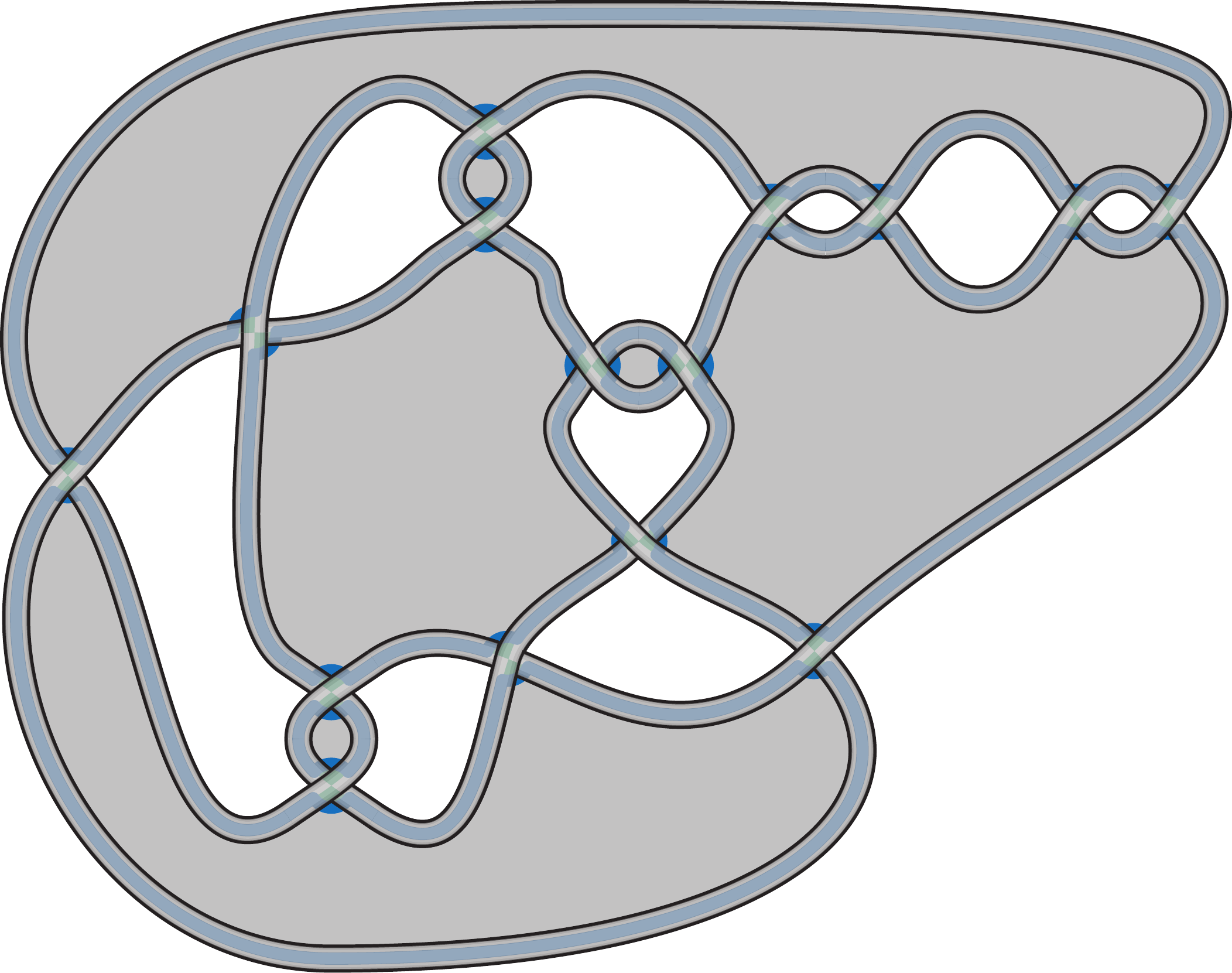}
\caption{Left: $F$ is in $9$-good position. Right: $D_{F,W}$.}
\label{Fi:Example}
\end{center}
\end{figure}

\begin{example}
In Figure \ref{Fi:Example}, left, where $F$ is in fair position,%
\footnote{In fact, $F$ is in \ref{M:9}-good position; see \textsection\ref{S:Hierarchy}.}%
\footnote{Color guide: $\violet{F\cap S_0}$, $\red{F\cap S_+\setminus S_0}$, $\Navy{F\cap S_-\setminus S_0}$, $\FG{F\cap C}$.} each of the four (red-purple) circles of $F\cap S_+$ gives a Move \ref{M:10}, in fact a flype-type re-plumbing.  The diagram on the right is $D_{F,W}$. Note: 
\begin{itemize}
\item 
The circles of $F\cap S_+$ are more salient than those of $F\cap S_-$.
\item One could isotope the arc $\Navy{\beta}$ of $\partial F\cap S_-$ past $\partial B$ into $S_+$, thus decreasing $|F\cap S_0|$, but then the circles of $F\cap S_+$ would be less illuminating.  We will carefully define Moves \ref{M:1}-\ref{M:9}, especially Moves \ref{M:5} and \ref{M:7}, so as not to include this tempting move. 
\item The top-right flype could be achieved by means of isotopy, but this isotopy would not fix $v_F$. We prefer to define Moves \ref{M:1}-\ref{M:9} so that each fixes $v_F$ (where $F$ ``looks nice").  
\end{itemize}
\end{example}

\begin{definition}\label{D:pt}
Suppose $F$ is in fair position and $\alpha$ is a properly embedded arc in $S_\pm{\cut} F$ such that
\begin{enumerate}[label=(\alph*)]
\item both endpoints of $\alpha$ lie on the same circle $\gamma$ of $F\cap S_\pm$,
\item $\alpha$ lies in a disk $Y$ of $S_{\pm B}$ or $S_{\pm W}$,
\item $|\alpha\cap S_0|=1$,
\item $\alpha$'s endpoints lie on the interiors of arcs $\gamma',\gamma''$ of $\gamma\cap Y{\cut}\partial S_0$,
\item no arc of $\gamma\cap S_0$ intersects both $\gamma'$ and $\gamma''$,\footnote{In particular, $\gamma'\cap\gamma''=\varnothing$.} and 
\item $\pi(\alpha)\cap\pi(\partial F\cap S_\mp)=\varnothing$.
\end{enumerate}
Suppose a properly embedded arc $\beta\subset F_\gamma$ with $\partial\beta=\partial\alpha$ is parallel to $\alpha$ through a properly embedded disk $X\subset H_\pm\cut F$.\footnote{Lemma \ref{L:FairP} (A) guarantees the existence of $\beta$ and $X$.} Isotope $F$ near $\beta$ through $X$ past $\alpha$. We call this a {\bf push-through move}.
\end{definition}

There are three possible pictures of the situation, depending on how many endpoints of $\alpha$ lie in ${S_0}$ versus on $\partial \nu L$; see Figure \ref{Fi:PushThrough}.

\begin{figure}
\begin{center}
\labellist
\tiny\hair 4pt
\pinlabel {$\red{\boldsymbol{\beta}}$} [l] at 220 400
\pinlabel {$\brown{\boldsymbol{\alpha}}$} [l] at 180 325
\pinlabel {$\red{\boldsymbol{\beta}}$} [l] at 635 390
\pinlabel {$\brown{\boldsymbol{\alpha}}$} [l] at 560 325
\pinlabel {$\red{\boldsymbol{\beta}}$} [l] at 1040 405
\pinlabel {$\brown{\boldsymbol{\alpha}}$} [l] at 970 325
\endlabellist
\includegraphics[width=\textwidth]{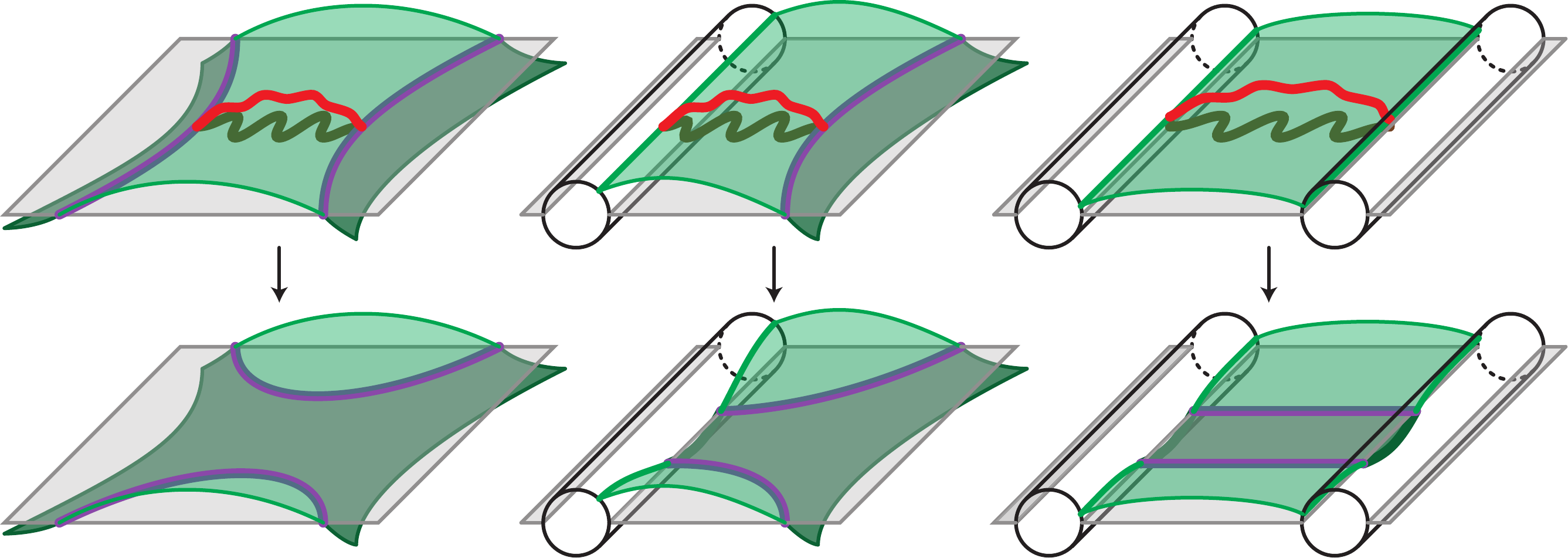}\\
\includegraphics[width=\textwidth]{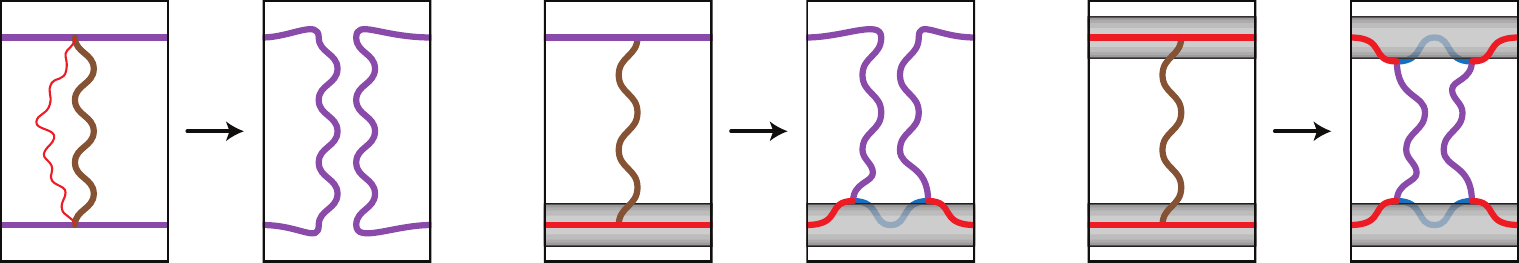}
\caption{Push-through moves (Moves \ref{M:7}, \ref{M:8}, and \ref{M:9})}
\label{Fi:PushThrough}
\end{center}
\end{figure}

\begin{prop}\label{P:M10}
If $F$ admits a push-through move along $\alpha\subset S_{\pm W}$ and $\partial\alpha\subset\partial\nu L$, then the endpoints of $\alpha$ lie on the same edge.
\end{prop}

\begin{proof}
Such a move creates two non-standard arcs of $F\cap W$.  Lemma \ref{L:--Only} (C) implies that these arcs, and thus $\alpha\cap S_0$, are $\partial$-parallel in $W$. The result follows because $D$ is prime.
\end{proof}

\begin{definition}
\label{D:complexity}
If $F$ is in fair position, then we define the following measures of {\it complexity} for $F$:
\begin{equation}\label{E:Complexity}
\begin{split}
\lb F \rb_1&=
|v{\cut} F|=\left|\begin{matrix}\text{crossing balls without}\\ \text{crossing bands}\end{matrix}\right|+\left|\begin{matrix}\text{saddle}\\ \text{disks}\end{matrix}\right|,\\
\lb F \rb_2&=|F\cap {S_0}|,\\
\lb F \rb_3&=|F\cap S_0|-2|F\cap S_+|.
\end{split}
\end{equation}
\end{definition}

\subsection{Hierarchy of isotopy moves on $F$}\label{S:Hierarchy}

In \textsection\textsection\ref{S:Hierarchy}-\ref{S:FAlt} we describe several moves on $F$, denoted Move \ref{M:1} through Move \ref{M:10}, subject to the following rule of hierarchy, which will ensure that each move preserves fair position:%
\footnote{Moves \ref{M:1}-\ref{M:9}, defined in \textsection\ref{S:Hierarchy},
are isotopies; Move \ref{M:10} in \textsection\ref{S:FAlt} is a re-plumbing.}%
\footnote{Unlike the hierarchy described in Procedure \ref{Proc:Kill1}, where it turns out that all (1)'s always precede all (2)'s which (vacuously) precede all (3)'s, we will see that there are situations where some Move $k$ enables a previously impossible Move $\ell$ for some $\ell<k$.  Lemma \ref{L:SequenceNo12} will somewhat constrain this behavior.}

\begin{convention}\label{Conv:Hier11}
For each Move $k$ defined in the sequel, \ref{M:1}$\leq k\leq$\ref{M:10}, we perform Move $k$ only if $F$ is in fair position and admits none of Moves $1,\hdots,k-1$.
\end{convention}

\begin{definition}\label{D:good}
For $0\leq k\leq 10$, $F$ is in {\it ${k}$-good position} (relative to $B,W$) if $F$ is in fair position and admits no Move $\ell$ with $\ell\leq k$.
\end{definition}

Moves \ref{M:1}-\ref{M:9} will serve two main purposes.  First, Moves \ref{M:1}-\ref{M:6} will simplify how the arcs of $F\cap W$  interact with $v$. (They will also simplify $F\cap B$.) Second, Moves \ref{M:7}-\ref{M:9} will increase the number of circles of $F\cap S_+$ and thus simplify these circles {\it individually}.  In fact, we will see that in \ref{M:9}-good position each innermost circle of $F\cap S_+$ enables a re-plumbing (Move \ref{M:10}), which we will eventually discover is always a flype-type re-plumbing. 


\begin{figure}
\begin{center}
\labellist
\tiny\hair 4pt
\pinlabel {$\red{\boldsymbol{\alpha_+}}$} [l] at 27 73
\pinlabel {$\Navy{\boldsymbol{\alpha_-}}$} [l] at 75 35
\endlabellist
\includegraphics[width=.6\textwidth]{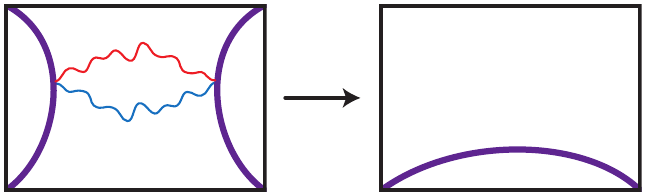}
\caption{Move \ref{M:1}}
\label{Fi:Move1}
\end{center}
\end{figure}

\begin{move}\label{M:1}
Suppose $\alpha\subset {S_0}$ is an arc with $\alpha\cap F=\partial \alpha=\{x,y\}$, where $x,y$ lie on distinct arcs of $F\cap {S_0}$ but on the same circles $\gamma_+\subset F\cap S_+$ {\it and} $\gamma_-\subset F\cap S_-$; suppose $\alpha_\pm\subset F_{\gamma_\pm}$ are properly embedded arcs with $\partial \alpha_{\pm}=\{x,y\}$ such that the circle $\gamma=\alpha_+\cup\alpha_-$ bounds a disk $X\subset S^3\setminus\inter L$ with $X\cap F=\partial X$ and $X\cap {S_0}=\alpha$.\footnote{Lemma \ref{L:FairP} (A) guarantees the existence of $\alpha_\pm$ and $X$.} Then $X$ is parallel in $S^3{\cut}(F\cup \nu L)$ to a disk $F_0\subset F$; isotope $F$ near $F_0$ past $X$.%
 \footnote{Recall 
that $F$ is incompressible and $S^3\setminus L$ is irreducible.}
\end{move}

Figure \ref{Fi:Move1} shows the effect of Move \ref{M:1} near $\alpha$.
The next property motivates conditions (e)-(f) in Definition \ref{D:pt}:

\begin{obs}\label{O:pt}
If $F$ is in \ref{M:1}-good position and $F\to F'$ is a push-through move, then $F'$ is in fair position.
\end{obs}

\begin{figure}
\begin{center}
\labellist
\tiny\hair 4pt
\pinlabel {$\violet{\boldsymbol{\beta}}$} [l] at 118 128
\pinlabel {$\violet{\boldsymbol{\beta_+}}$} [l] at 20 139
\pinlabel {$\violet{\boldsymbol{\beta'_+}}$} [l] at -2 115
\pinlabel {$\violet{\boldsymbol{\beta_-}}$} [l] at 102 8
\pinlabel {$\violet{\boldsymbol{\beta'_-}}$} [l] at 122 33
\pinlabel {$\sepia{\boldsymbol{\sigma_+}}$} [l] at -2 136
\pinlabel {$\sepia{\boldsymbol{\sigma_-}}$} [l] at 310 13
\pinlabel {$\violet{\boldsymbol{\beta_-}}$} [l] at 291 8
\pinlabel {$\violet{\boldsymbol{\beta'_-}}$} [l] at 311 33
\endlabellist
\includegraphics[width=\textwidth]{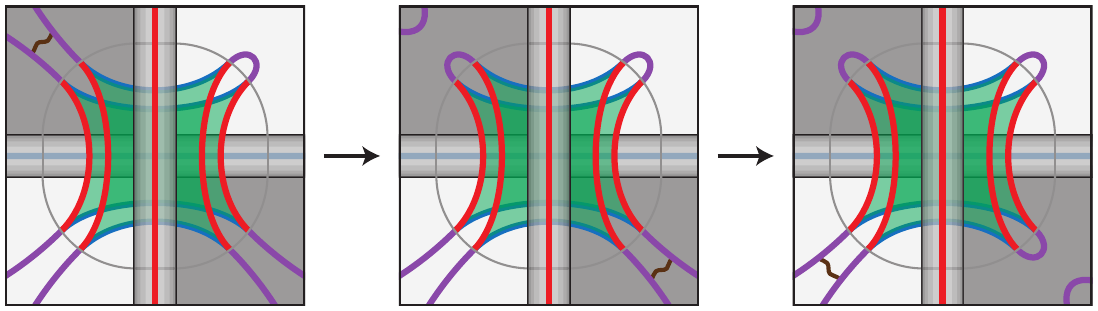}
\caption{Move \ref{M:2}}
\label{Fi:Move2}
\end{center}
\end{figure}

\begin{move}\label{M:2}
If $F\cap \wh{W}$ contains an arc whose endpoints are both on the same crossing ball, then take $\omega$ to be an outermost such arc in ${\wh{W}}$, and denote the circles of $F\cap S_\pm$ containing $\omega$ by $\gamma_\pm$.  Each $\gamma_\pm\cap \partial C$ consists of two arcs incident to $\omega$, each of which is incident to an arc of $\gamma_\pm\cap \wh{B}$; let $\beta_\pm$ and $\beta'_\pm$ denote these arcs of $\gamma_\pm\cap \wh{B}$. Choose $+$ or $-$ so that $\beta_\pm\neq \beta'_\pm$,%
\footnote{We may have $\beta_+=\beta'_+$ or $\beta_-=\beta'_-$ but not both, by \ref{M:1}-good position.}
construct a properly embedded arc $\sigma_\pm\subset  \wh{B}{\cut} F$ with one endpoint on each of 
$\beta_\pm$ and $\beta'_\pm$, and perform a push-through move along $\sigma_\pm$, as shown in Figure \ref{Fi:Move2}. \end{move}


 \begin{lemma}\label{L:No12Iff}
With $F$ in fair position, the following are equivalent:
\begin{enumerate}[label=(\Roman*)]
\item No arc of $F\cap \wh{W}$ is parallel in $\wh{W}$ into $\partial C$.
\item No arc of $F\cap W{\cut} v$ is parallel in $W{\cut} v$ into $v$.\footnote{That is, there are no bigons in $W\cut (F\cup v)$.}
\item $F$ is in \ref{M:2}-good position.
\end{enumerate}
\end{lemma}

\begin{lemma}\label{L:BigonWFull0}
If $F$ is in \ref{M:2}-good position, then $F$ admits no push-through move along any arc $\alpha\subset\wh{W}$.
\end{lemma}

\begin{figure}
\begin{center}
\labellist
\tiny\hair 4pt
\pinlabel {$\violet{\boldsymbol{\alpha}}$} [l] at 5 47
\pinlabel {$\red{\boldsymbol{\beta}}$} [l] at 25 30
\endlabellist
\includegraphics[width=.25\textwidth]{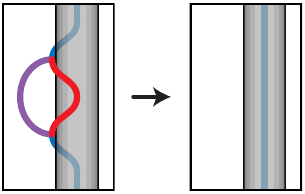}
\hspace{.1\textwidth}
\labellist
\tiny\hair 4pt
\pinlabel {$\violet{\boldsymbol{\alpha}}$} [l] at 5 47
\pinlabel {$\Navy{\boldsymbol{\beta}}$} [l] at 25 30
\endlabellist
\includegraphics[width=.25\textwidth]{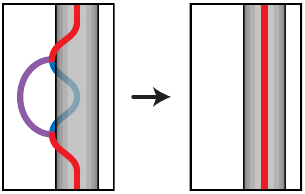}
\caption{Move \ref{M:3}}
\label{Fi:Move3}
\end{center}
\end{figure}

\begin{move}\label{M:3}
Suppose an arc $\alpha$ of $F\cap {S_0}$ is parallel in $ S_0{\cut} F$ to an arc $\alpha'\subset\partial\nu L$.  Proposition \ref{P:Ess} implies that $\alpha'$ is parallel on $\partial\nu L$ to an arc $\beta\subset\partial F$. If $\text{int}(\beta)\cap \partial{S_0}\neq \varnothing$, then push $(F_{\alpha\cup\beta},\beta)$ through $(H_\pm,\partial\nu L)$ past $({S_0},\alpha')$
as shown in Figure \ref{Fi:Move3}.
\end{move}

\begin{prop}\label{P:GoodComplexity7}
If $F$ is in \ref{M:3}-good position, then each circle $\gamma$ of $F\cap S_+$ satisfies $|\gamma\cap S_0|\geq 2$, so $\lb F \rb_3\geq 0$.
\end{prop}

\begin{proof}
Assume instead that $|\gamma\cap S_0|<2$.  Then Lemma \ref{L:FairP} (B)-(C) implies that $\gamma\cap \partial C=\varnothing$ and $\gamma\not\subset S_0$.  Further, since $D$ is connected and nontrivial, $\gamma\not\subset\partial\nu L$.  Therefore, $F$ appears near $\gamma$ as in Figure \ref{Fi:Move3} and,  contrary to assumption, admits a Move \ref{M:3} near $\gamma$.
\end{proof}


\begin{lemma}\label{L:Moves123}
Given that $F$ is in \ref{M:2}-good position, $F$ is in \ref{M:3}-good position if and only if no arc of $F\cap \wh{B}$ is $\partial$-parallel in $B$.
\end{lemma}

\begin{figure}
\begin{center}
\labellist
\tiny\hair 4pt
\pinlabel {$\violet{\boldsymbol{\alpha}}$} [l] at 102 145
\pinlabel {$\red{\boldsymbol{\lambda}}$} [l] at 34 10
\pinlabel {$\red{\boldsymbol{\rho}}$} [l] at 89 100
\endlabellist
\includegraphics[width=.475\textwidth]{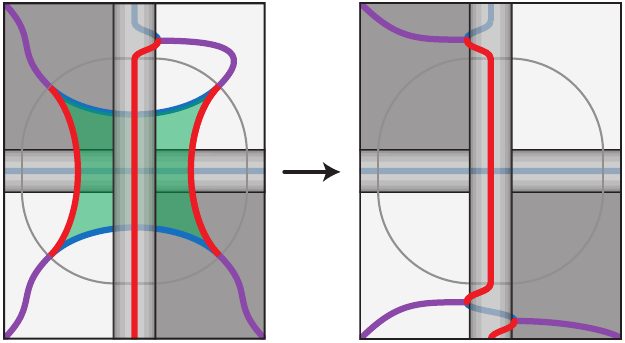}\hfill
\labellist
\tiny\hair 4pt
\pinlabel {$\violet{\boldsymbol{\alpha}}$} [l] at 0 147
\pinlabel {$\Navy{\boldsymbol{\lambda}}$} [l] at 70 10
\pinlabel {$\Navy{\boldsymbol{\rho}}$} [l] at 15 100
\endlabellist
\includegraphics[width=.475\textwidth]{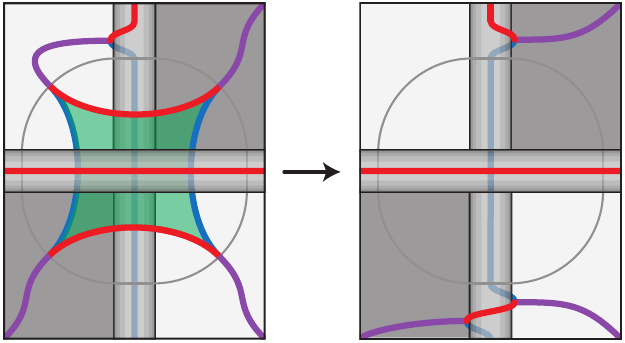}\\
\caption{Move \ref{M:4}}
\label{Fi:Move4}
\end{center}
\end{figure}

\begin{move}\label{M:4}
Suppose an arc $\alpha$ of $F\cap \wh{W}$ 
is incident to (i) an arc $\lambda$ of $\partial F\cap S_\pm$ that traverses the over/underpass at a crossing $C_t$ and (ii) an arc $\rho$ of $F\cap S_\pm\cap \partial C_t$ (at the same crossing).\footnote{Note that the endpoint $x$ shared by $\alpha$ and $\lambda$ satisfies $i(\partial F, \partial W)=+1$.}
Isotope $F$ nearby as shown in Figure \ref{Fi:Move4}.
\end{move}

\begin{figure}
\begin{center}
\labellist
\tiny\hair 4pt
\pinlabel {$\violet{\boldsymbol{\omega}}$} [l] at 95 160
\pinlabel {$\Navy{\boldsymbol{\alpha}}$} [l] at 78 128
\pinlabel {$\red{\boldsymbol{\alpha'}}$} [l] at 81 140
\endlabellist
\includegraphics[width=.485\textwidth]{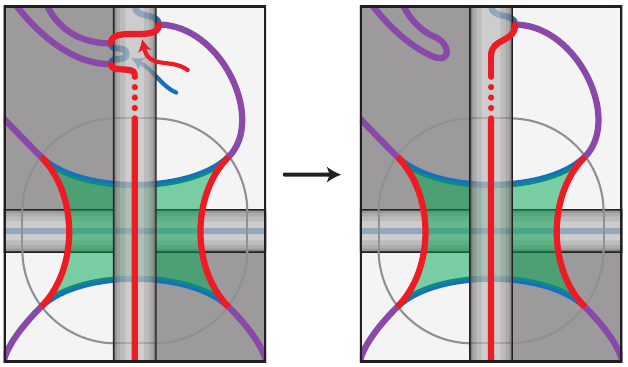}\hfill
\labellist
\tiny\hair 4pt
\pinlabel {$\violet{\boldsymbol{\omega}}$} [l] at 12 160
\pinlabel {$\red{\boldsymbol{\alpha}}$} [l] at 22 128
\pinlabel {$\Navy{\boldsymbol{\alpha'}}$} [l] at 14 140
\endlabellist
\includegraphics[width=.485\textwidth]{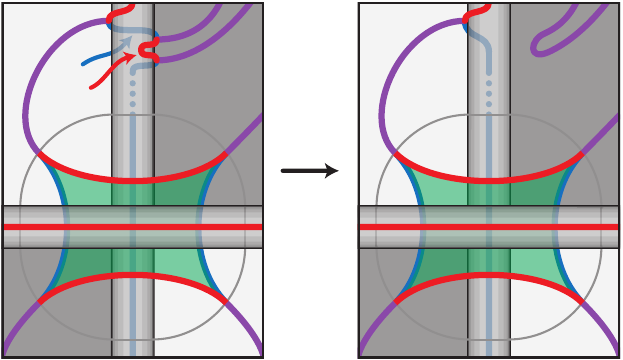}\\
\labellist
\tiny\hair 4pt
\pinlabel {$\violet{\boldsymbol{\omega}}$} [l] at 92 88
\pinlabel {$\red{\boldsymbol{\alpha'}}$} [l] at 60 95
\pinlabel {$\Navy{\boldsymbol{\alpha}}$} [l] at 55 70
\endlabellist
\includegraphics[width=.485\textwidth]{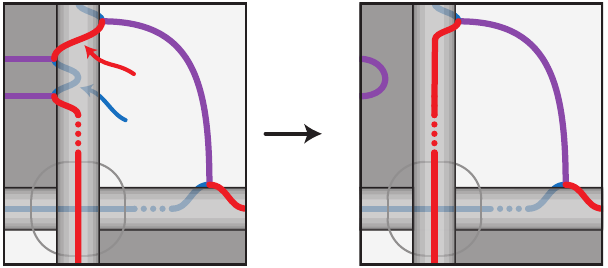}\hfill
\labellist
\tiny\hair 4pt
\pinlabel {$\violet{\boldsymbol{\omega}}$} [l] at 3 90
\pinlabel {$\Navy{\boldsymbol{\alpha'}}$} [l] at 36 94
\pinlabel {$\red{\boldsymbol{\alpha}}$} [l] at 38 72
\endlabellist
\includegraphics[width=.485\textwidth]{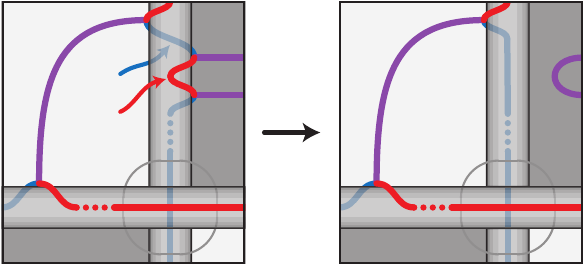}
\caption{Move \ref{M:5}}
\label{Fi:Move5}
\end{center}
\end{figure}

\begin{move}\label{M:5}
Suppose that an arc $\alpha$ of $\partial F\cap S_\pm$ lies entirely on an edge $E$ and is parallel in $E$ into $\partial B$, and that one of the arcs $\alpha'$ of $\partial F\cap S_\mp$ incident to $\alpha$ lies entirely in $E$ and is incident to an arc $\omega$ of $F\cap \wh{W}$ whose other endpoint lies either:
\begin{itemize}
\item
on a crossing ball incident to $E$ or
\item on an edge  $E'$ adjacent to $E$\footnote{Lemma \ref{L:Moves123} implies that $E'\neq E$.}  at a crossing $C_t$ with $v_t\not\subset F$.
\end{itemize}
Isotope $F$ near $\alpha$ as shown in Figure \ref{Fi:Move5}.%
\end{move}



 \begin{lemma}\label{L:Flyping1}
If $F$ is in \ref{M:5}-good position and an arc $\alpha'$ of $F\cap W\setminus v_F$ is isotopic in $W\setminus v_F$ into $\wh{W}\cup v$, then $\alpha'\subset \wh{W}$.\footnote{Note: in $W\setminus v_F$, $\alpha'$ is isotopic into $\wh{W}\cup v$ if and only if it is isotopic into $\wh{W}$.}
\end{lemma}

\begin{lemma}\label{L:BigonWFull1}
If $F$ is in \ref{M:5}-good position and admits a push-through move along an arc $\alpha\subset S_\pm{\cut} F$, then $\alpha$ intersects $B$, not $W$. 
\end{lemma}

\begin{lemma}\label{L:FlypingSaddle}
If $F$ is in \ref{M:5}-good position and $\gamma\subset F\cap S_+$ is a flyping circle which traverses the overpass at $C_t$, then $|F\cap C_t|\neq1$.\footnote{In fact, $F\cap C_t=\varnothing$, but we will not need this.}
\end{lemma}

 \begin{figure}
\begin{center}
\labellist
\tiny\hair 4pt
\pinlabel {$\violet{\boldsymbol{\alpha}}$} [l] at 123 127
\endlabellist
\includegraphics[height=.25\textwidth]{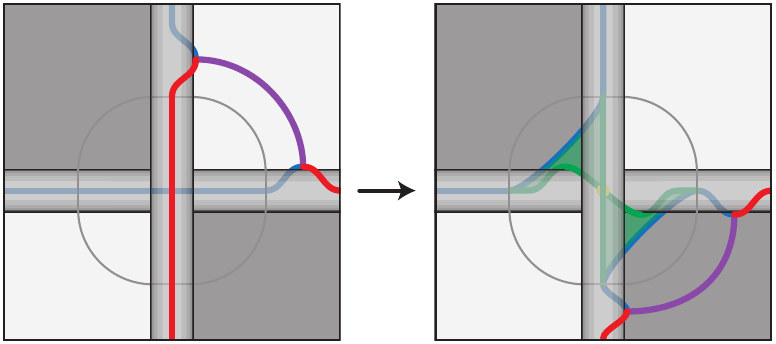}
\caption{Move \ref{M:6}.}
\label{Fi:Move6}
\end{center}
\end{figure}

 \begin{move}\label{M:6}
Suppose an arc $\alpha$ of $F\cap \wh{W}$ is incident to arcs of $\partial F\cap S_+$ and $\partial F\cap S_-$ that traverse the overpass and underpass at the same crossing. Isotope $F$ near $\alpha$ as shown in Figure \ref{Fi:Move6}.
\end{move}


  \begin{lemma}\label{L:GoodIff}
With $F$ in fair position, the following are equivalent:
\begin{enumerate}[label=(\Roman*)]
\item No arc of $F\cap \wh{B}$ is $\partial$-parallel in ${B}$, and no arc of $F\cap\wh{W}$:
\begin{enumerate}[label=(\alph*)]
\item is parallel in $S_0$ into $\partial C$,
\item has endpoints on a crossing ball and incident edge, nor
\item has endpoints on edges that are adjacent at a crossing ball where $F$ \emph{does not} have a crossing band.
\end{enumerate}
\item No disk of $B\cut (v\cup F)$ is a bigon, and no disk $X$ of $W\cut (v\cup F)$ satisfies $|\partial X\cap v|=1=|\partial X\cap F|$.\footnote{Such $X$ is either a bigon, triangle, or rectangle.}
\item $F$ is in \ref{M:6}-good position.
\end{enumerate}
\end{lemma}


\begin{move}\label{M:7}
Perform a push-through move along an arc $\alpha\subset \wh{B}{\cut} F$ whose endpoints lie on the same circle of $F\cap S_+$.
\end{move}

\begin{move}\label{M:8}
Perform a push-through move along an arc $\alpha\subset S_{+B}{\cut} F$ whose endpoints $x\in\wh{B}$ and $y\in \partial\nu L$ lie on the same circle of $F\cap S_+$.
\end{move}

\begin{move}\label{M:9}
Perform a push-through move along an arc $\alpha\subset S_{+B}{\cut} F$ whose endpoints $x,y\in\partial\nu L$ lie on the same circle of $F\cap S_+$.
\end{move}

When $F$ is in \ref{M:9}-good position, circles of $F\cap S_-$ may admit push-through moves, but those of $F\cap S_+$ must not, due to Lemma \ref{L:BigonWFull1}.

\begin{lemma}\label{L:DecreaseComplexity}
Moves \ref{M:1}-\ref{M:9} all preserve fair position and fix or decrease $\lb F \rb_1$, Moves \ref{M:1}-\ref{M:7} each lead to a lexicographical 
decrease in $\left(\lb F\rb_1,\lb F\rb_2,\lb F\rb_3\right)$,\footnote{Namely, Move \ref{M:1} decreases $\lb F \rb_1$ (and $\lb F \rb_2$); Move \ref{M:2} fixes $\lb F \rb_1$ and $\lb F \rb_2$ and leads to Move \ref{M:1} (that is, although Move \ref{M:2} itself fixes complexity, it is always possible to follow Move \ref{M:2} either with a Move \ref{M:1} or with a second Move \ref{M:2} and then a Move \ref{M:1}, and in either case, this sequence of moves decreases complexity); Moves \ref{M:4} and \ref{M:6} decrease $\lb F \rb_1$; Moves \ref{M:3} and \ref{M:5} fix $\lb F \rb_1$ and decrease $\lb F \rb_2$; and Move \ref{M:7} fixes $\lb F \rb_1$ and $\lb F \rb_2$ while decreasing $\lb F \rb_3$.} and Moves \ref{M:8}-\ref{M:9} both decrease $\lb F \rb_3$.
\end{lemma}



\begin{lemma}\label{L:SequenceNo12}
Suppose that $F$ is in \ref{M:2}-good position, and $F=F_0\to\cdots\to F_r$ is a sequence of Moves \ref{M:1}-\ref{M:9}. Then:
\begin{enumerate}[label=(\Alph*)]
\item Neither Move \ref{M:1} nor Move \ref{M:2} appears in the sequence.
\item The isotopy $F_0\to F_r$ restricts to an isotopy $F_0\cap W\to F_r\cap W$ in $W$ which fixes $v_{F_0}\subset v_{F_r}$.
\item If $F$ is in \ref{M:6}-good position, then the sequence $F_0\to F_r$ fixes $F\cap W$ and involves only Moves \ref{M:3} and \ref{M:7}-\ref{M:9}.
\item If $F$ is in \ref{M:7}-good position, then $F_0\to F_r$ uses only Moves \ref{M:8}-\ref{M:9}.
\end{enumerate}
\end{lemma}

\begin{lemma}\label{L:IntoGood10}
Any sequence of Moves \ref{M:1}-\ref{M:9} terminates, giving an isotopy $F\to F'$ where $F'$ is in \ref{M:9}-good position with $\lb F' \rb_1\leq \lb F \rb_1$.
\end{lemma}

\section{Plumb-equivalence of essential positive-definite surfaces}\label{S:Replumb}

In \textsection\textsection\ref{S:Replumb}-\ref{S:Main}, we will discover that when $F$ is in \ref{M:9}-good position, $F\cap S_+$ consists entirely of flyping circles; this collection of circles instantly reveals the sequence of flype moves that takes $D$ to $D_{F,W}$.  Our path to this discovery is indirect.  In \textsection\ref{S:Replumb}, we analyze innermost circles of $F\cap S_+$ when $F$ is in \ref{M:9}-good position and discover that any such circle enables a re-plumbing, which we define as Move \ref{M:10}.  A priori, Move \ref{M:10} can be much more complicated than flype-type re-plumbing.  Nevertheless, Move \ref{M:10} allows us to deduce that $F$ and $B$ are plumb-related; this gives a new proof of part of Tait's first conjecture and helps set the stage for the proof of our main result in \textsection\ref{S:Main}.  Section \ref{S:Technical5} contains the proofs of all lemmas that appear without their proofs  in \textsection\ref{S:Replumb}.

\subsection{Innermost circles in \ref{M:9}-good position}\label{S:FAlt}

 In \textsection\ref{S:FAlt}, keeping the setup from \textsection\ref{S:CSetup}, we assume that $F$ is in \ref{M:9}-good position with $F\cap S_+\neq\varnothing$ and consider an arbitrary innermost disk $T_+$ of $S_+{\cut} F$. Denote $\partial T_+=\gamma_0$ and orient $\gamma_0$ so that it runs counterclockwise around $T_+$ when viewed from $H_+$, and denote $T_-=S_-\cap (\pi^{-1}\circ\pi(T_+))$.

\begin{lemma}\label{L:GammaOnC}
Consider an arc $\rho$ of $\gamma_0\cap \partial C$, denote the incident arcs of $\gamma_0\cap \wh{B}$ and $\gamma_0\cap\wh{W}$ by $\beta$ and $\omega$
.  Let $C_t$ denote the crossing ball containing $\rho$, $B_0$ and $W_0$ the disks of $\wh{B}$ and $\wh{W}$ containing $\beta$ and $\omega$, $E$ the edge incident to $B_0$, $W_0$, and $C_t$, and $C_s$ the other crossing ball incident to $E$.
Then $\beta\cup\rho\cup\omega$ appears as in Figure \ref{Fi:GammaOnC}, left:
\begin{enumerate}[label=(\Alph*)]
\item $\gamma_0\cap C^+_t=\rho$,\footnote{In particular, $\gamma_0$ does not traverse the overpass at $C_t$.}
\item  $\gamma_0\cap E=\varnothing$,
\item $C_s$ lies in $Y_1$ and contains a crossing band in $F$, and 
\item both endpoints of $\beta\cup\rho\cup\omega$ lie on $\partial\nu L$.
\end{enumerate}
\end{lemma}

\begin{figure}
\begin{center}
\begin{multicols}{2}
\labellist
\tiny\hair 4pt
\pinlabel {$\violet{\boldsymbol{\omega}}$} [l] at 50 110
\pinlabel {$\red{\boldsymbol{\rho}}$} [l] at 97 90
\pinlabel {$\violet{\boldsymbol{\beta}}$} [l] at 186 110
\pinlabel {${\boldsymbol{C_t}}$} [l] at 133 92
\pinlabel {${\boldsymbol{E}}$} [l] at 133 130
\pinlabel {${\boldsymbol{C_s}}$} [l] at 93 182
\pinlabel {${\boldsymbol{C_s}}$} [l] at 93 182
\pinlabel {${\boldsymbol{B_0}}$} [l] at 186 150
\pinlabel {${\boldsymbol{W_0}}$} [l] at 50 150
\endlabellist
\includegraphics[width=.5\textwidth]{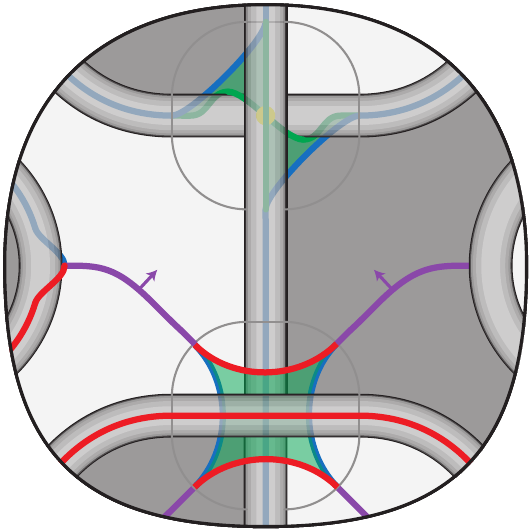}\\
\labellist
\tiny\hair 4pt
\pinlabel {$\violet{\boldsymbol{\gamma_0}}$} [c] at 107 81
\pinlabel {$\violet{\boldsymbol{\gamma_0}}$} [c] at 128 48
\pinlabel {$\brown{\boldsymbol{\alpha}}$} [c] at 108 57
\pinlabel {$\violet{\boldsymbol{\gamma_0}}$} [c] at 40 -4
\pinlabel {$\violet{\boldsymbol{\gamma_0}}$} [c] at 60 -4
\endlabellist
\includegraphics[height=.225\textwidth]{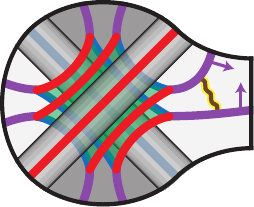}\\ ~\vspace{.1in}~\\
\labellist
\tiny\hair 4pt
\pinlabel {$\violet{\boldsymbol{\gamma_0}}$} [c] at -6 50
\pinlabel {$\red{\boldsymbol{\gamma_0}}$} [c] at 9 9
\pinlabel {$\red{\boldsymbol{\gamma_0}}$} [c] at 90 90
\pinlabel {$\brown{\boldsymbol{\alpha}}$} [c] at 9 35
\pinlabel {$\violet{\boldsymbol{\gamma_0}}$} [c] at 49 105
\endlabellist
\includegraphics[height=.225\textwidth]{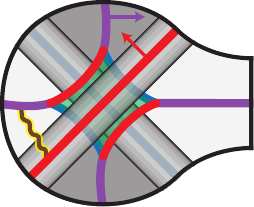}\\
\end{multicols}
\caption{$F$ near $\rho\subset\gamma_0\cap C$ (arrows point into $T_+$; 
the sign of $\beta$'s endpoint on $\partial\nu L$ is unspecified).}\label{Fi:GammaOnC}
\end{center}
\end{figure}

Next,
we describe how $\gamma_0$ gives a re-plumbing move $F\to F'$ such that $\lb F' \rb_1<\lb F \rb_1$.  We then deduce that all essential positive-definite spanning surfaces for $L$ are plumb-equivalent.

Take an annular neighborhood $A$ of $\pi({\gamma_0})$ in $S^2$, such that 
$A$ intersects only the crossing balls that $\pi({\gamma_0})$ intersects, $\partial A\cap C=\varnothing$, and 
each arc of $F\cap {S_0}\cap A$ lies on ${\gamma_0}$ or has an endpoint on $\partial C$. Denote $\partial A={\gamma_1}\cup{\gamma_2}$ where ${\gamma_1}\subset\pi(T_+)$, denote $S^2{\cut} A=S_1\sqcup S_2$ with each $\partial S_i=\gamma_i$, denote each ball $\pi^{-1}(S_i)=\wh{Y_i}$, and denote the annular prism $\pi^{-1}(A)=\wh{P}$.

Viewing $\nu S^2\equiv S^2\times[-1,1]$, choose $0<r<R<1$ such that $C\cup\nu L\subset S^2\times[-r,r]$, and denote $P=\wh{P}\cap (S^2\times[-R,R])$ and $Y_i=\wh{Y_i}\cap (S^2\times[-R,R])$, $i=1,2$. While fixing $F\cap (S_+\cup S_-\cup C)$, isotope $F_{\gamma_0}$ into $\left(\pi^{-1}\circ\pi(T_+)\right)\cap\left( S^2\times[0,R]\right)$ so that $\pi|_{F_{\gamma_0}}$ is injective; adjust all other disks $X$ of $F\cap H_+$ so that $X\cap Y_1=\varnothing$, $X\cap P\subset \pi^{-1}(\partial X)$, and $\pi|_{X\cut P}$ is injective; and adjust each disk $X$ of $F\cap H_-$ so that $X\subset S^2\times[-R,0]$ and $\pi|_{X}$ is injective.\footnote{
We do this so Figure \ref{Fi:LastMove} will be generic; some of the complication is for the benefit of \cite{virtual}.}

Denote the arcs of $\gamma_0\cap \wh{W}$ by $\omega_1,\hdots,\omega_m$, indexed following $\gamma_0$'s orientation. Each $\omega_i$ has a dual arc $\alpha_i\subset A\cap \wh{W}$.%
\footnote{The arc $\alpha_i$ has one endpoint on ${\gamma_1}$ and one on ${\gamma_2}$, with $|\omega_i\cap \alpha_i|=1$.}
Denote the rectangles of $A{\cut}(\alpha_1\cup\cdots\cup\alpha_m)$ by $A_1,\hdots, A_m$ with each $\partial A_i\supset \alpha_i\cup\alpha_{i+1}$, taking indices modulo $m$. 
Denote each prism $\pi^{-1}(A_i)\cap P=P_i$.


\begin{lemma}\label{L:Prism}
With the setup above, each prism $P_i$ intersects $F$ in one of the three ways indicated in the left column of Figure \ref{Fi:LastMove}.%
\footnote{\label{Foot:Green} The green arcs top-left describe a disk $X_i\subset P_i\setminus\nu L$  ($\partial X_i$ is shown thick, and $X_i\cap S_+$ is shown thin) which is parallel through a ball $Z_i\subset P_i$ into $\pi^{-1}({\gamma_2})$ ($Z_i$ contains the overpass in $P_i$); $F$ intersects $P_i$ as shown and in an arbitrary number of additional disks in $Z_i$, each containing a saddle disk.}
\end{lemma}

For each $i$, let $F_i$ denote the component of $F\cap P_i$ which intersects $\gamma_0$.  Observe that each $F_i$ is a disk, and that $F_i$ and $F_j$ intersect in an arc when $\congmod{i}{j\pm1}{m}$ and are disjoint when $\ncongmod{i}{j,j\pm1}{m}$. Denote $F_A=F_1\cup\cdots\cup F_m$.
The disk $F_{\gamma_0}\cap Y_1$ attaches to $F_A$ along its boundary; therefore, $F_A$ is an annulus, and the following subsurface of $F$ is a disk:
\begin{equation*}\label{E:U}
U=(F_{\gamma_0}\cap Y_1)\cup F_A.
\end{equation*}

There is a properly embedded disk $V\subset \inter  S^2{\cut} (F\cup\nu L)$ which intersects $Y_1$ in a disk (in $H_-$) and intersects each prism $P_i$ as indicated in the right column of Figure \ref{Fi:LastMove}. %
Note that $\partial V\cap F=\partial U\cap F\subset\pi^{-1}(\partial A)$ and that $(\partial V\cap\partial\nu L)\cup(\partial U\cap\partial\nu L)$ is a system of meridia and inessential circles on $\partial\nu L$.\footnote{Inessential circles arise only in prisms of type II.}
Thus $V$ is a(n a priori possibly fake) plumbing cap for $F$, and $U$ is its shadow, so 
$F$ is plumb-related to $F'=(F{\cut} U)\cup V$
.\footnote{In each prism $P_i$ of type I, we have $F'\cap Z_i=F\cap Z_i$, using Note \ref{Foot:Green}'s notation.}

\begin{figure}
\begin{center}
\labellist
\small\hair 4pt
\pinlabel{Type III:} [r] at 0 120
\pinlabel{Type II:} [r] at 0 390
\pinlabel{Type I:} [r] at 0 660
\tiny\hair 4pt
\pinlabel{${\boldsymbol{\alpha_i}}$} [c] at 15 90
\pinlabel{${\boldsymbol{\alpha_i}}$} [c] at 15 360
\pinlabel{${\boldsymbol{\alpha_i}}$} [c] at 15 630
\pinlabel{$\violet{\boldsymbol{\omega_i}}$} [c] at 43 150
\pinlabel{$\violet{\boldsymbol{\omega_i}}$} [c] at 75 375
\pinlabel{$\violet{\boldsymbol{\omega_i}}$} [c] at 90 700
\pinlabel{${\boldsymbol{x_i}}$} [c] at 60 166
\pinlabel{${\boldsymbol{x_i}}$} [c] at 81 350
\pinlabel{${\boldsymbol{x_i}}$} [c] at 140 690
\pinlabel{${\boldsymbol{\alpha_{i+1}}}$} [c] at 416 70
\pinlabel{${\boldsymbol{\alpha_{i+1}}}$} [c] at 416 340
\pinlabel{${\boldsymbol{\alpha_{i+1}}}$} [c] at 416 610
\pinlabel{$\violet{\boldsymbol{\omega_{i+1}}}$} [c] at 375 142
\pinlabel{$\violet{\boldsymbol{\omega_{i+1}}}$} [c] at 352 380
\pinlabel{$\violet{\boldsymbol{\omega_{i+1}}}$} [c] at 363 678
\pinlabel{$\Gray{\boldsymbol{\gamma_{2}}}$} [c] at 140 18
\pinlabel{$\Gray{\boldsymbol{\gamma_{2}}}$} [c] at 136 289
\pinlabel{$\Gray{\boldsymbol{\gamma_{2}}}$} [c] at 300 559
\pinlabel{$\Gray{\boldsymbol{\gamma_{1}}}$} [c] at 180 217
\pinlabel{$\Gray{\boldsymbol{\gamma_{1}}}$} [c] at 210 487
\pinlabel{$\Gray{\boldsymbol{\gamma_{1}}}$} [c] at 250 758
\pinlabel{${\boldsymbol{y_i}}$} [c] at 66 18
\pinlabel{${\boldsymbol{z_i}}$} [c] at 188 289
\pinlabel{${\boldsymbol{z_i}}$} [c] at 220 18
\pinlabel{${\boldsymbol{y_{i}}}$} [c] at 86 289
\endlabellist
\includegraphics[width=\textwidth]{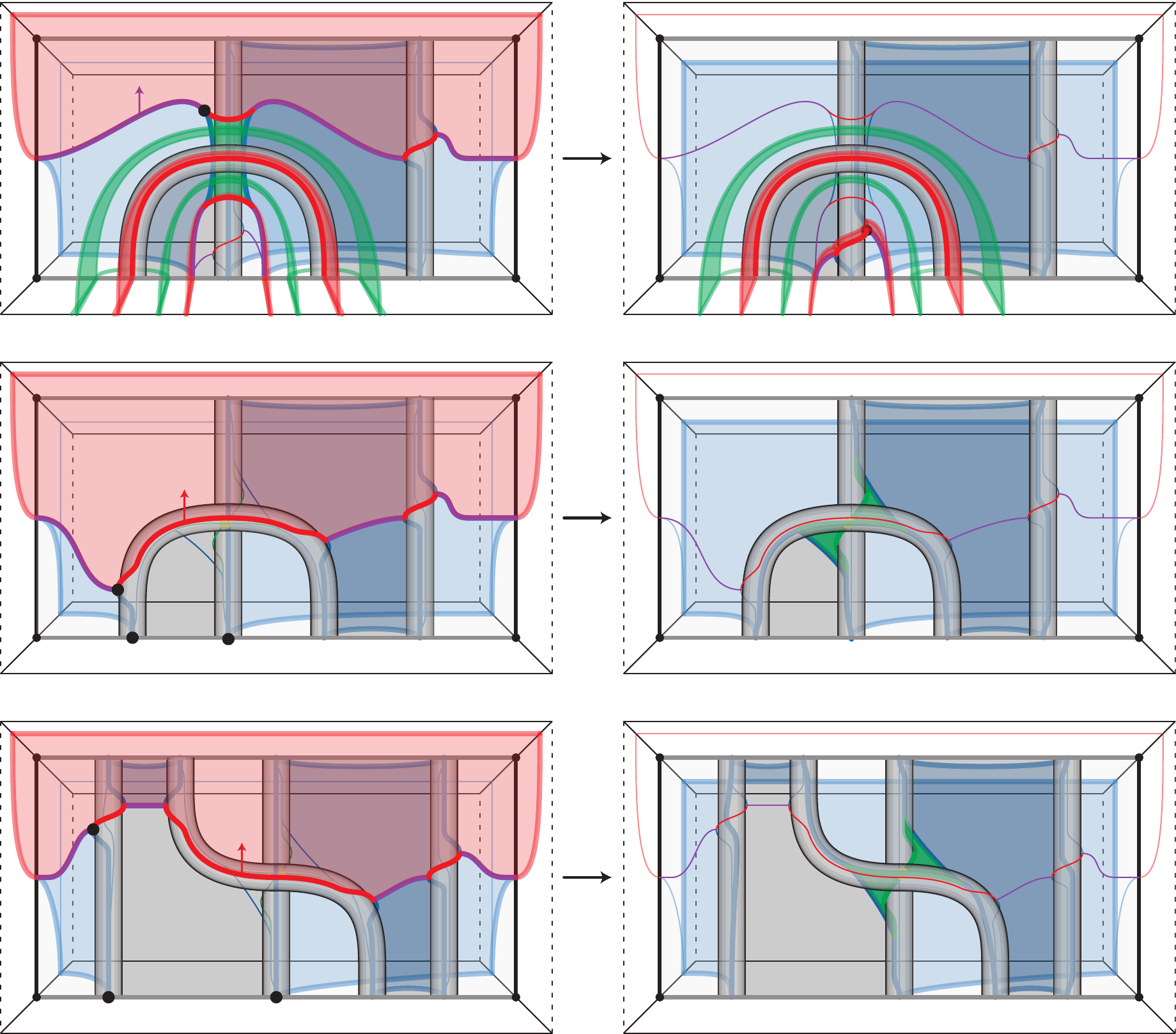}
\caption{Move \ref{M:10} within each prism $P_i$.}
\label{Fi:LastMove}
\end{center}
\end{figure}

\begin{move}\label{M:10}
With the setup above, 
replace $F$ with $F'=(F{\cut} U)\cup V$. In each prism $P_i$, this changes $F\to F'$ as shown in Figure \ref{Fi:LastMove}.
\end{move}

Note that when $F$ is in \ref{M:9}-good position any \emph{flype-type} re-plumbing $F\to F'$ is a Move \ref{M:10}.

\subsection{Properties of Move \ref{M:10}}\label{S:Move10}

\begin{lemma}\label{L:LastFair}
Any Move \ref{M:10} $F\to F'$ leaves $F'$ in fair position.
\end{lemma}

\begin{prop}\label{P:Complex11}
Given any sequence $F\to F'$ of Moves \ref{M:1}-\ref{M:10} that involves at least one Move \ref{M:10}, we have $\lb F \rb_1>\lb F' \rb_1$. Hence, any sequence $F\to F'$ of Moves \ref{M:1}-\ref{M:10} terminates.
\end{prop}

\begin{proof}
By Lemmas \ref{L:DecreaseComplexity} and \ref{L:LastFair}, Moves \ref{M:1}-\ref{M:10} all preserve fair position, and none of Moves \ref{M:1}-\ref{M:9} increase $\lb F \rb_1$. Further, Move \ref{M:10} removes a saddle disk or creates a crossing band in each prism $P_i$, hence strictly decreases $\lb F \rb_1$. The second claim follows immediately.
\end{proof}



In \textsection\ref{S:Main}, we will prove that when $F$ is in \ref{M:9}-good position $F\cap S_+$ contains only flyping circles; hence, Move \ref{M:10} is always a flype-type re-plumbing, and thus (by Lemma \ref{L:FlypingCircles}) $D_{F,W}$ is flype-related to $D$. A symmetric argument will then complete our proof of the flyping theorem.  For now, though, only this conclusion is at hand:

\begin{theorem}\label{T:plumb}
If $B$, $W$ are the checkerboard surfaces from a prime alternating diagram $D\subset S^2$ of a link $L$, then any essential positive-definite surface $F$ spanning $L$ is plumb-related to $B$ (via Moves \ref{M:1}-\ref{M:10}); likewise for essential negative-definite surfaces and $W$.
\end{theorem}

\begin{proof}
Put $F$ in fair position and apply Moves \ref{M:1}-\ref{M:10}.  By Proposition \ref{P:Complex11}, this terminates, giving a sequence of isotopy and re-plumbing moves from $F$ to $B$.
\end{proof}

Proposition \ref{P:B4} and Theorem \ref{T:plumb} imply:

\begin{cor}\label{C:tait}
If $B$ and $B'$ are essential definite surfaces of the same sign spanning $L$, then $\beta_1(B)=\beta_1(B')$ and $s(B)=s(B')$.
 \end{cor}

Facts \ref{F:CBEss} and \ref{F:PGreene}, Lemma \ref{L:--Only}, Theorem \ref{T:plumb}, and Corollary \ref{C:tait} give a new proof of part of Tait's first conjecture:

\begin{theorem}[Part of Tait's first conjecture \cite{greene,kauff,mur,this,tur}]\label{T:tait1}
All reduced alternating diagrams of any link $L\subset S^3$ have the same number of crossings.
\end{theorem}

\begin{proof}
Assume first that $L$ is prime. Consider two reduced alternating diagrams $D_i$ of $L$, $i=1,2$, with checkerboard surfaces $B_i,W_i$. Each arc $\alpha$ of $B_i\cap W_i$ satisfies $i(\partial B_i,\partial W_i)_{\nu\partial\alpha}=+2$. Also, $s(B_1)=s(B_2)$ and $s(W_1)=s(W_2)$. Thus, 
\[2c(D_1)=i(\partial B_1,\partial W_1)=
s(B_1)-s(W_1)=
s(B_2)-s(W_2)
=2c(D_2).\]
The general case now follows, as the number of crossings is additive under diagrammatic connect sum and disjoint union.
\end{proof}

\begin{lemma}\label{L:LastMove1}
If $F_0\to F_1$ is a Move \ref{M:10}, then:
\begin{enumerate}[label=(\Alph*)]
\item $F_1$ is in \ref{M:3}-good position; and
\item if no prism is of type I, then $F_1$ is in \ref{M:9}-good position.
\end{enumerate}
\end{lemma}

\begin{proof}
Recall that $F_1$ is in fair position by Lemma \ref{L:LastFair}, so applying Lemma \ref{L:GoodIff} to $F_0$ and Lemmas \ref{L:No12Iff} and \ref{L:Moves123} to $F_1$ confirms (A) (see Figure \ref{Fi:LastMove}). Part (B) follows from Lemmas \ref{L:GoodIff} and \ref{L:Prism}.
 \end{proof}

In any sequence of Moves \ref{M:1}-\ref{M:10} that uses Move \ref{M:10} at least once and ends in \ref{M:10}-good position, the {\it final} move in the sequence is a Move \ref{M:10} with no prisms of type I, i.e. a flype-type re-plumbing:

\begin{lemma}\label{L:LastZero}
If $F=F_0\to F_1$ is a Move \ref{M:10} along $\gamma_0$ and $F_1\to F_2$ is a sequence of Moves \ref{M:1}-\ref{M:9} leaving $F_2$ in \ref{M:10}-good position, then:
\begin{enumerate}[label=(\Alph*)]
\item no prism in the Move \ref{M:10} is of type I,
\item $\gamma_0$ is the only circle of $F\cap S_+$, and
\item $\gamma_0$ is a flyping circle.
\end{enumerate}
\end{lemma}

Therefore,  if $F$ is in \ref{M:9}-good position with no saddle disks, then $D_{F,W}$ and $D$ are flype-related:

\begin{lemma}\label{L:LastFlyping}
If $F$ is in \ref{M:9}-good position and $F\cap C=v_F$, then every circle $\gamma$ of $F\cap S_+$ is a flyping circle; thus $D_{F,W}$ is related to $D$ by a sequence of flypes that preserve the isotopy class of $W$.
\end{lemma}

\begin{proof}
Lemma \ref{L:LastMove1} (B) implies that any sequence $F=F_0\to \cdots\to F_r$ of Moves \ref{M:1}-\ref{M:10} uses only Move \ref{M:10}. Each Move \ref{M:10} $F_i\to F_{i+1}$ fixes each circle of $F_i\cap S_+$ except the one it removes, and we may perform this sequence so that $\gamma$ is the last remaining circle.  Lemma \ref{L:LastZero} (C) now confirms the first claim. Lemma \ref{L:FlypingCircles} then confirms the rest.
\end{proof}

\section{Main results}\label{S:Main}

We will show that  \ref{M:9}-good position prohibits $F\cap C$ from containing saddle disks, i.e. forces $F\cap C=v_F$. Lemma \ref{L:LastFlyping} will then imply that $D_{F,W}$ and $D$ are flype-related. The proof of the flyping theorem will then follow.

\subsection{Bad position}\label{S:Bad}

Assuming by way of contradiction that $F$ is in \ref{M:9}-good position and $F\cap C\neq v_F$, Lemma \ref{L:Flyping1} implies that $F\cap W\setminus v_F$ is not isotopic in $W\setminus v_F$ into $\wh{W}$; we will prove that there must then be an innermost circle $\gamma_0$ of $F\cap S_+$ such that, even after we perform Move \ref{M:10} $F\to F'$ along $\gamma_0$, $F'\cap W\setminus v_{F'}$ still is not isotopic in $W\setminus v_{F'}$ into $\wh{W}$.  
This will imply, however, that by performing Moves \ref{M:1}-\ref{M:10} such that each Move \ref{M:10} proceeds along {\it such a circle} $\gamma_0$, we will never reach \ref{M:10}-good position, contradicting Proposition \ref{P:Complex11}.  This strategy motivates the following definition.

\begin{definition}\label{D:Bad}
Say that $F$ is in {\it bad position} if $F$ is in 
\ref{M:9}-good position, $F\cap C\neq v_F$, 
and, after each possible Move \ref{M:10} $F\to F'$, $F'\cap W\setminus v_{F'}$ is isotopic in $W\setminus v_{F'}$ into $\wh{W}$.  
\end{definition}

\begin{subl}\label{SL:Bad1}
Suppose $F$ is in bad position and $\gamma_0$ is an innermost circle of $F\cap S_+$.  Then:
\begin{enumerate}[label=(\Alph*)]
\item For every arc $\alpha_0$ of $F\cap W\setminus v_F$, either $\alpha_0$ is isotopic in $W\setminus v_F$ into $\wh{W}$ or $\alpha_0$ has an endpoint on $\gamma_0$;
\item Each arc $\alpha$ of $F\cap \wh{W}$ has $\partial\alpha\subset\partial C$ or $\partial\alpha\subset\partial\nu L$ or lies on an innermost circle of $F\cap S_+$.
\item $\gamma_0\cap \partial C\neq \varnothing$;
\item $|F\cap S_+|\geq 3$; and
\end{enumerate}
\end{subl}

\begin{proof}
For (A), if $\alpha_0$ is not isotopic in $W\setminus v_F$ into $\wh{W}$, then the Move \ref{M:10} along $\gamma_0$ must change $\alpha_0$.  Recalling Lemma \ref{L:Prism} and Figure \ref{Fi:LastMove}, this requires $\alpha_0$ and $\gamma_0$ to intersect, which further requires $\alpha_0$ to have an endpoint on $\gamma_0$. Part (A) implies (B).

For (C), if $\gamma_0\cap \partial C\neq \varnothing$, then the Move \ref{M:10} $F\to F'$ along $\gamma_0$ has no type I prisms, hence fixes every arc of $F\cap W$ that intersects $v$ and, by Lemma \ref{L:LastMove1} (B), leaves $F'$ in \ref{M:9}-good position. This contradicts the assumption of bad position.  Part (D) follows from (C), using Lemmas \ref{L:FairP} (C) and \ref{L:GammaOnC} (A).
\end{proof}



\begin{lemma}\label{L:Bad}
$F$ cannot be in bad position.
\end{lemma}


\begin{proof}
Assume otherwise. 
Choose a circle $\gamma_1$ of $F\cap S_+$ and a disk $X$ of $S_+\cut\gamma_1$ for which $\text{int}(X)\cap F=\gamma_0$ is a nonempty collection of innermost circles of $F\cap S_+$.\footnote{Here, Sublemma \ref{SL:Bad1} (A) implies that the circles of $F\cap S_+$ are mutually nested and thus that $\gamma_0$ is a single innermost circle, but this is less clear in \cite{virtual}.} We claim that $\gamma_1\cap C=\varnothing$.  
If not, take an arc $\omega$ of $\gamma_1\cap\wh{W}$ incident to $C$, so that $\partial\omega\subset\partial C$ by Sublemma \ref{SL:Bad1} (C)-(D). Consider the crossing ball $C_t$ and arc $\rho$ of $\gamma_1\cap \partial C_t$, both incident to $\omega$, for which an arrow pointing from $\rho$ into $X$ points toward the overpass at $C_t$. See Figure \ref{Fi:Bad4}. Since $|\gamma_0\cap \partial C_t|\leq|\gamma_0|$ by Lemma \ref{L:GammaOnC} (A), $F$ admits a push-through move near $C_t$ along an arc $\alpha\subset S_{+W}$, violating Lemma \ref{L:BigonWFull1}. 
This confirms that $\gamma_1\cap C=\varnothing$.  

\begin{figure}
\begin{center}
\hfill
\labellist
\tiny\hair 4pt
\pinlabel {$\red{\boldsymbol{\gamma_1}}$} [c] at 9 9
\pinlabel {$\red{\boldsymbol{\gamma_1}}$} [c] at 90 90
\pinlabel {$\violet{\boldsymbol{\omega}}$} [c] at 100 44
\pinlabel {$\violet{\boldsymbol{\gamma_1}}$} [c] at 128 48
\pinlabel {$\brown{\boldsymbol{\alpha}}$} [c] at 95 67
\pinlabel {$\violet{\boldsymbol{\gamma_1}}$} [c] at 46 -4
\pinlabel {$\red{\boldsymbol{\rho}}$} [c] at 78 44
\endlabellist
\includegraphics[height=.25\textwidth]{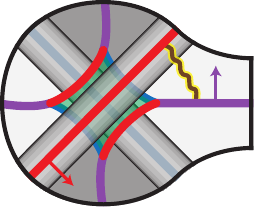}\hfill
\labellist
\tiny\hair 4pt
\pinlabel {$\violet{\boldsymbol{\gamma_1}}$} [c] at 107 81
\pinlabel {$\violet{\boldsymbol{\omega}}$} [c] at 111 40
\pinlabel {$\violet{\boldsymbol{\gamma_1}}$} [c] at 128 48
\pinlabel {$\brown{\boldsymbol{\alpha}}$} [c] at 108 57
\pinlabel {$\violet{\boldsymbol{\gamma_1}}$} [c] at 40 -4
\pinlabel {$\violet{\boldsymbol{\gamma_1}}$} [c] at 60 -4
\pinlabel {$\red{\boldsymbol{\rho}}$} [c] at 85 37
\endlabellist
\includegraphics[height=.25\textwidth]{figures/Bad4E}
\hfill$\,$
\caption{$\gamma_0$ and $\gamma_1$ near $C_t$ in the proof of Lemma \ref{L:Bad}}
\label{Fi:Bad4}
\end{center}
\end{figure}


Bad position requires $\gamma_0$ to intersect some disk $C_s^+$ of $C^+$, and $\gamma_1$ must traverse the overpass at $C_s$, due to Lemma \ref{L:GammaOnC} (A) and the fact that $\gamma_1\cap \partial C=\varnothing$. Ergo, $|F\cap C_s|=1$, contradicting Lemma \ref{L:FlypingSaddle}.
 \end{proof}

\begin{theorem}\label{T:Bad}
If $F$ is in \ref{M:9}-good position, then $F\cap C=v_F$.  Hence, $F\cap S_+$ contains only flyping circles, so $D_{F,W}$ is related to $D$ by a sequence of flypes (that preserve the isotopy class of $W$).
\end{theorem}

\begin{proof}
By Lemma \ref{L:LastFlyping}, it suffices to prove that $F\cap C=v_F$. Suppose instead that at least one arc of $F\cap W\setminus v_F$ intersects $C$; by Lemma \ref{L:Flyping1}, no such arc is isotopic in $W\setminus v_F$ into $\wh{W}\cup v$.
By Lemma \ref{L:Bad} there is a Move \ref{M:10} $F=F_0\to F_1$ after which $F_1\cap W\setminus v_{F_1}$ still is not isotopic in $W\setminus v_{F_1}$ into $\wh{W}\cup v$.
By Lemma \ref{L:IntoGood10}, there is then a sequence $F_1\to F_2$ of Moves \ref{M:1}-\ref{M:9} for which $F_2$ is in \ref{M:9}-good position, and by Lemmas \ref{L:LastMove1} (A) and \ref{L:SequenceNo12} (B), this sequence restricts to an isotopy $F_1\cap W\to F_2\cap W$ in $W$ which fixes $v_{F_1}$. Thus, $F_2\cap W\not\subset \wh{W}\cup v$, so $F_2\cap C\neq v_{F_2}$. 
Therefore, repeating this process gives an infinite sequence of Moves \ref{M:1}-\ref{M:10}, contradicting Proposition \ref{P:Complex11}.
\end{proof}

\subsection{Proof of Tait's conjectures}\label{S:Tait}

Using Convention \ref{Conv:+-} and the notation introduced there, we have:

\begin{theorem}[Tait's flyping conjecture \cite{menthis91,menthis93}]\label{T:tait}
Any two reduced alternating diagrams $D=D_{B,W}$ and $D'=D_{B',W'}$ of the same prime link $L\subset S^3$ are related by a sequence of flypes $D\to\cdots\to D''\to\cdots\to D'$ in which $D\to \cdots\to D''$ preserves the isotopy class of $W$ and $D''\to\cdots\to D'$ preserves the isotopy class of $B'$. 
\end{theorem}

\begin{proof}
Denote $D''=D_{B',W}$.
Use Lemmas \ref{L:Fair} and \ref{L:IntoGood10} to isotope $B'$ into \ref{M:9}-good position relative to $B,W$; Theorem \ref{T:Bad} gives the needed sequence $D\to D''$. Isotope $W'$ into \ref{M:9}-good position relative to $B',W$; Theorem \ref{T:Bad} gives the needed sequence $D''\to D'$.
\end{proof}

Since writhe is invariant under flypes (recall Observation \ref{O:Flype}) and additive under diagrammatic connect sum and disjoint union, we obtain a new geometric proof of Tait's second conjecture:

\begin{theorem}[Tait's second conjecture \cite{greene,mur87ii,this88a}]\label{T:tait2}
All reduced alternating diagrams of a given link $L\subset S^3$ have equal writhe.
\end{theorem}

We again remark that Problems \ref{Prob:Tait}-\ref{Prob:This} remain open.

\section{Proofs of technical lemmas from \textsection\ref{S:Back}}\label{S:Technical2}

It remains to prove several results from \textsection\textsection\ref{S:Back}-\ref{S:Replumb}.
We prove those from \textsection\ref{S:Back} in this section, those from \textsection\ref{S:MenascoH} in \textsection\ref{S:Technical4}, and those from \textsection\ref{S:Replumb} in \textsection\ref{S:Technical5}.

\subsection{Operations on definite surfaces}\label{S:Oper}

We will prove Lemmas \ref{L:Arcs}, \ref{L:ArcsAbstract}, \ref{L:--Only} and \ref{L:Bigon} and Theorem \ref{T:DBW} in \textsection\ref{S:Arcs2}. First, in \textsection\ref{S:Oper}, we lay some groundwork.

\begin{prop}\label{P:BCSum}
If $F_1$ and $F_2$ are definite surfaces of the same sign, and $F=F_1\natural F_2$, then $F$ is definite and of the same sign.
\end{prop}

\begin{proof}
If $G_i$ be a Goeritz matrix for $F_i$, $i=1,2$, then $G=$\scalebox{.75}{$\begin{bmatrix}
G_1&0\\0&G_2\\
\end{bmatrix}$}
is a Goeritz matrix for $F$ with $\sigma(G)=\sigma(G_1)+\sigma(G_2)$.
\end{proof}

\begin{prop}\label{P:Subsurface}
If $S$ is a compact subsurface of a definite surface $F$ and every component of $F\setminus S$ intersects $\partial F$, then $S$ is definite.
\footnote{This extends Lemma 3.3 of \cite{greene}: 
If $S$ is a compact subsurface of a definite surface $F$ and $\partial S$ is connected, then $S$ is definite.}
\end{prop}

\begin{proof}
We will prove that the map $j_*:H_1(S)\to H_1(F)$ induced by inclusion is injective. Let $g\in H_1(S)$ with $j_*(g)=0\in H_1(F)$. 
Choose an oriented multicurve $\gamma\subset \text{int}(S)$ representing $g$. Then $\gamma=\partial F'$ for some orientable subsurface $F'\subset F$. If $F'\subset S$, then $g=0\in H_1(S)$ and we are done. If not, then $F'$ intersects a component $F_1$ of $F\setminus S$; in fact, $F'\supset F_1$, because $\gamma\subset S$. This gives the following contradiction:
\[\pushQED\qed \varnothing=\partial F'\setminus \gamma= F'\cap\partial F\supset F_1\cap \partial F\neq\varnothing.\qedhere\]
\end{proof}

In particular, Proposition \ref{P:Subsurface} immediately implies:

\begin{subl}\label{SL:arccutdef}
If $\alpha$ is a system of disjoint properly embedded arcs in a definite surface $F$, then $F\setminus\inter\alpha$ is definite.
\end{subl}

Next, consider the operation of {\it adding (half) twists}, shown in Figure \ref{Fi:AddTwists}. It works like this. Let $F$ be a spanning surface for a link $L$, $\alpha\subset F$ a properly embedded arc, and $m$ an integer.  
Let $A$ be an unknotted annulus or m\"obius band whose core circle has framing $\frac{m}{2}$, %
and let $\alpha'\subset A$ be a co-core.
Construct $F\natural A$ in such a way that $\alpha$ and $\alpha'$ are glued at their endpoints to form an arc $\alpha''\subset F\natural A$.  
Depending on the sign of $m$, the surface $F'=(F\natural A)\setminus\inter {\alpha''}$ is said to be obtained from $F$ by {\it adding $\left|\frac{m}{2}\right|$ positive or negative twists along $\alpha$}. %

\begin{figure}
\begin{center}
\labellist
\tiny\hair 4pt
\pinlabel {$\white{\boldsymbol{\alpha'}}$} [c] at 110 124
\pinlabel {$\white{\boldsymbol{\alpha}}$} [c] at 113 40
\pinlabel {$\white{\boldsymbol{\alpha''}}$} [c] at 412 55
\endlabellist
\includegraphics[width=\textwidth]{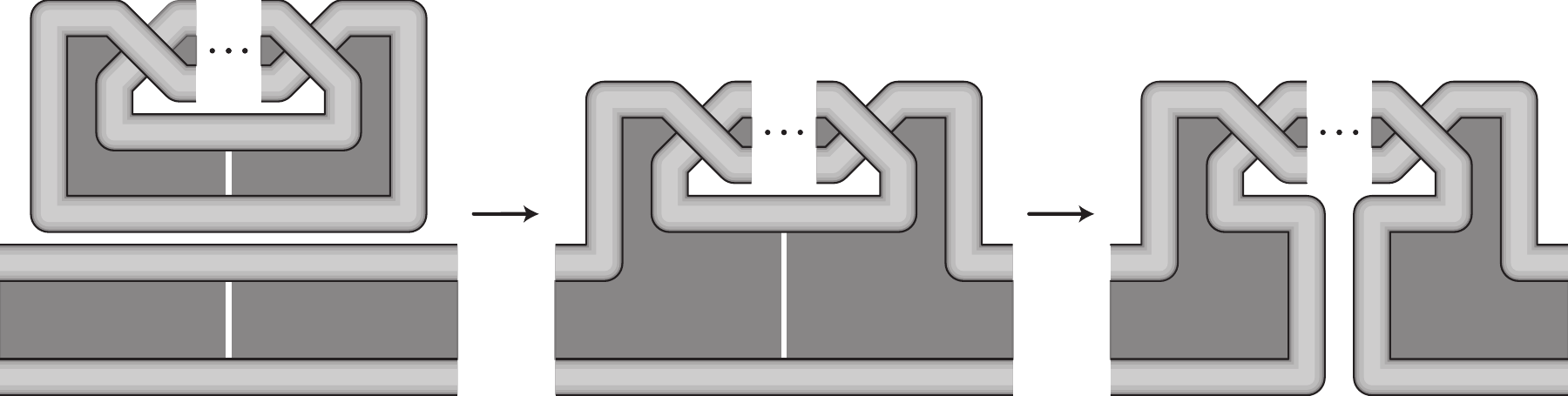}
\caption{Adding twists to a spanning surface}
\label{Fi:AddTwists}
\end{center}
\end{figure}

\begin{prop}\label{P:AddTwist}
If $F'$ is obtained by adding positive twists to a positive-definite surface $F$, then $F'$ is positive-definite.\footnote{Likewise for adding negative twists to a negative-definite surface.}
\end{prop}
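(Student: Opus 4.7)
The plan is to decompose adding twists along $\alpha$ into two steps already analyzed in the paper: first form the boundary-connect sum $F \natural A$, then cut along the arc $\alpha''$. By Proposition \ref{P:BCSum}, the first step preserves positive-definiteness, and by Lemma \ref{L:arccutdef}, the second preserves definiteness. What remains is to track the \emph{sign} of definiteness through the cutting step.

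First I would check that $A$ itself is positive-definite. Since $A$ is an annulus or M\"obius band whose core $c$ has framing $m > 0$, we have $\beta_1(A) = 1$ and $\langle c, c \rangle = 2m > 0$. Then Proposition \ref{P:BCSum} gives that $F \natural A$ is positive-definite, with block-diagonal Goeritz matrix of the form $\left(\begin{smallmatrix}G_F & 0 \\ 0 & 2m\end{smallmatrix}\right)$, where $G_F$ is any Goeritz matrix for $F$.

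Next, cut along $\alpha''$ to form $F'$. Following the argument in the proof of Proposition \ref{P:subsurface}, the inclusion-induced map $H_1(F'_0) \hookrightarrow H_1(F \natural A)$ is injective for each component $F'_0$ of $F'$. The Gordon--Litherland pairing on $F'_0$ agrees with the restriction of the pairing on $F \natural A$ via this injection, since each transfer curve $\widetilde{\gamma}$ is determined by a surface-neighborhood of $\gamma$, which coincides in $F'_0$ and in $F \natural A$ for $\gamma \subset \mathrm{int}(F'_0)$. Hence the Goeritz matrix of $F'_0$ can be realized as a principal submatrix of the (positive-definite) Goeritz matrix of $F \natural A$. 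By Cauchy interlacing, every principal submatrix of a positive-definite matrix is positive-definite, so each component of $F'$, and hence $F'$ itself, is positive-definite.

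The main subtlety is the sign-tracking in the cutting step: Lemma \ref{L:arccutdef} as stated only yields ``definite,'' and the resolution is the Cauchy interlacing argument above. This in turn relies on the observation that the Gordon--Litherland pairing restricts faithfully to a subsurface, a point worth making explicit since the boundary link changes from $\partial F$ to $\partial F'$ upon cutting; the remainder of the argument is then a routine piece of linear algebra.
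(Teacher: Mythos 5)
Your proof is correct and matches the paper's geometric argument, which decomposes the operation of adding twists into a boundary-connect sum (Proposition \ref{P:BCSum}) followed by cutting along the arc $\alpha''$ (Lemma \ref{L:arccutdef}). The sign-tracking concern you raise is legitimate since Lemma \ref{L:arccutdef} literally concludes only ``definite,'' but it is already resolved inside the proof of Proposition \ref{P:subsurface}: injectivity of $j_*$ makes the Gordon--Litherland form on each component of $F'$ the restriction of the form on $F \natural A$ to a subspace, so the sign of definiteness carries over immediately, without invoking Cauchy interlacing (your interlacing argument is a valid matrix-level rephrasing of the same observation, just heavier machinery than is needed).
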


Indeed, if $G$ is a positive-definite symmetric matrix and $G'$ is obtained by increasing a diagonal entry of $G$, then $G'$ is also positive-definite.
Alternatively, here is a geometric proof:

\begin{proof}
Let $A$ be an unknotted annulus or m\"obius band with $m$ half-twists for some $m>0$. Then $A$ is also positive-definite, as are $F\natural A$ and $F'$, by Proposition \ref{P:BCSum} and Sublemma \ref{SL:arccutdef}.
\end{proof}

\begin{prop}\label{P:Arc0}
Suppose $F_\pm$ are definite surfaces of opposite signs spanning a link $L$ and $\alpha$ is a non-standard arc of $F_+\cap F_-$. Denote $F_+'=F_+\setminus\inter\alpha$, $L'=\partial F_+'$, and $F_-'=F_-\setminus\inter\alpha$. Then the following are equivalent:
\begin{enumerate}[label=(\Roman*)]
\item $\alpha$ is separating on $F_+$;
\item $\alpha$ is separating on $F_-$;
\item $L'$ has one more split component than $L$.
\end{enumerate}
\end{prop}

\begin{proof}
Sublemma \ref{SL:arccutdef} implies that $F_+'$ and $F_-'$ are definite spanning surfaces of opposite sign, and both span $L'$ because $\alpha$ is non-standard (see Figure \ref{Fi:--Only}, bottom), so $L'$ is alternating by the first part of Fact \ref{F:SplitDef}. The conclusion now follows from the last part of Fact \ref{F:SplitDef}.
\end{proof}

\begin{prop}\label{P:BEss}
A positive-definite surface $F$ spanning a prime alternating link $L$ is essential if and only if every nonzero $a\in H_1(F)$ satisfies $\langle a,a\rangle\geq 2$.
\end{prop}

\begin{proof}
Take an essential negative-definite spanning surface $W$ for $L$, and let $D=D_{F,W}$. If $D$ is reduced, then both conditions are satisfied, the first by Fact \ref{F:CBEss} and the second by Corollary 5.2 of \cite{greene}.\footnote{The proof of Lemma 4 of \cite{cromur} gives an alternate, self-contained proof that the second condition holds.} Conversely, if $D$ has a nugatory crossing $c$, then, since $W$ is essential, $c$ is incident to distinct disks of $W{\cut} F$, hence to a single disk of $F{\cut} W$, and so neither condition is satisfied.
\end{proof}

\begin{prop}\label{P:CutEss}
Let $F$ be a positive-definite surface spanning a prime alternating link $L$, and let $\alpha\subset F$ be a properly embedded arc such that $F'=F\setminus\inter\alpha$ spans a prime alternating link $L'$.  If $F$ is essential, then $F'$ is also essential.
\end{prop}

\begin{proof}
By Sublemma \ref{SL:arccutdef}, $F'$ is positive-definite.  By Proposition \ref{P:BEss}, all nonzero $c\in H_1(F)$ satisfy $\langle c,c\rangle\geq 2$; thus, so do all nonzero $c\in H_1(F')$. Ergo, by Proposition \ref{P:BEss} (as $L'$ is prime and alternating), $F'$ is essential.
\end{proof}

\subsection{How definite surfaces of opposite signs intersect}\label{S:Arcs2}

\begin{prop}\label{P:Kill1}
After one completes Procedure \ref{Proc:Kill1}, each component $\alpha$ of $F_+\cap F_-$ is an arc with $i(\partial F_+,\partial F_-)_{\nu\partial\alpha}=+2$.\footnote{Procedure \ref{Proc:Kill1} terminates, as (1)-(3) all decrease $|F_+\cap F_-|+|\partial F_+\cap\partial F_-|$.}
\end{prop}

\begin{proof}
Procedure \ref{Proc:Kill1} (1) removes all circles of $F_+\cap F_-$, and (2) and (3) ensure that any remaining points $x,y\in\partial F_+\cap \partial F_-$ 
on the same component $\partial \nu L_i$ of $\partial\nu L$ have the same sign, $i(\partial F_+,\partial F_-)_{\nu x}=i(\partial F_+,\partial F_-)_{\nu y}$. This sign must be positive, since definiteness gives:
\[\pushQED\qed 
\lb\partial F_+\cap\partial \nu L_i\rb\geq 0\geq \lb\partial F_-\cap\partial\nu L_i\rb.\qedhere\]
\end{proof}

\begin{prop}\label{P:BdryParallel0}
If $F_\pm$ are definite surfaces of opposite signs spanning a link $L$ and $\alpha$ is an arc of $F_+\cap F_-$ that is $\partial$-parallel in both $F_+$ and $F_-$, then $\alpha$ is non-standard.
\end{prop}

\begin{proof}
Procedure \ref{Proc:Kill1} eventually removes $\alpha$ via move (2), and just before it does, $\alpha$ is non-standard, but none of the prior moves in the construction change $\alpha$, so $\alpha$ is non-standard initially too.
\end{proof}

 \begin{figure}
 \begin{center}
  \labellist
\tiny \hair 4pt
\pinlabel{$\white{\boldsymbol{Y}}$} at 50 170
\pinlabel{$\white{\boldsymbol{Y}}$} at 170 380
\endlabellist
\includegraphics[height=.225\textwidth]{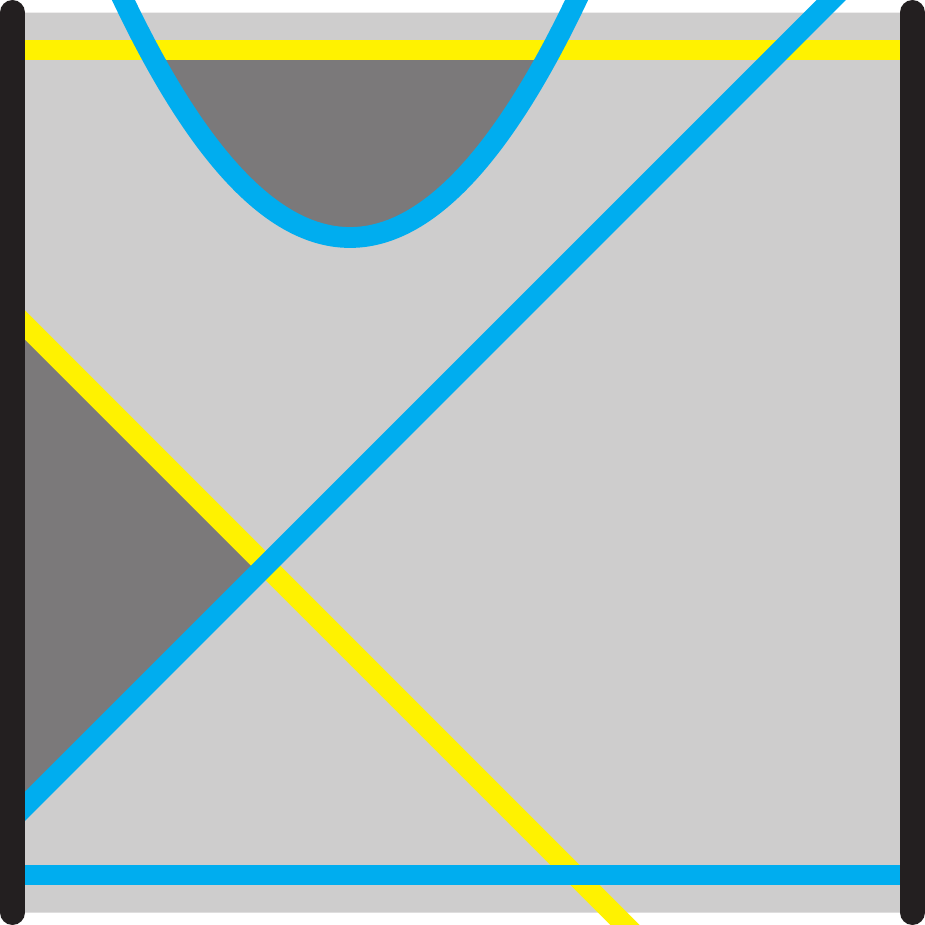}\hfill
 \labellist
\tiny \hair 4pt
\pinlabel{$\white{\boldsymbol{Y}}$} at 80 100
\pinlabel{$\white{\boldsymbol{Y}}$} at 380 360
\endlabellist
\includegraphics[height=.225\textwidth]{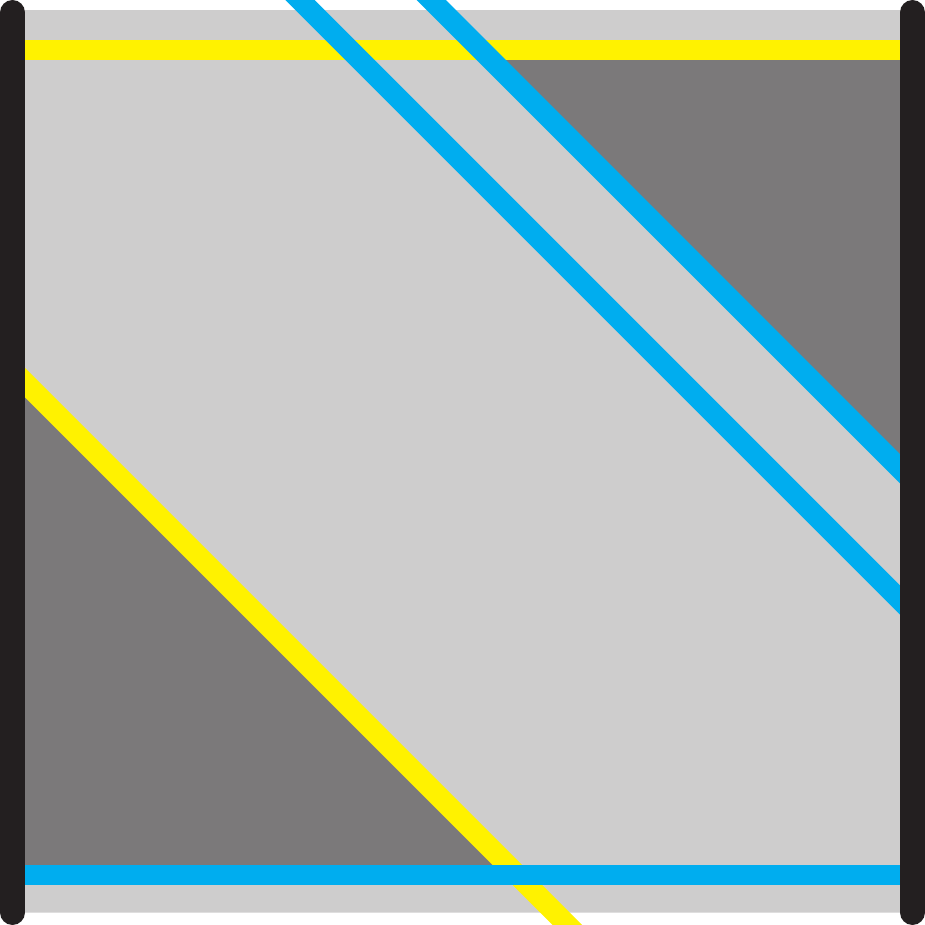}\hfill
 \labellist
\tiny \hair 4pt
\pinlabel{$\Yel{\boldsymbol{\alpha'}}$} at 90 160
\pinlabel{$\Cyan{\boldsymbol{\alpha}}$} at 160 310
\endlabellist
\includegraphics[height=.225\textwidth]{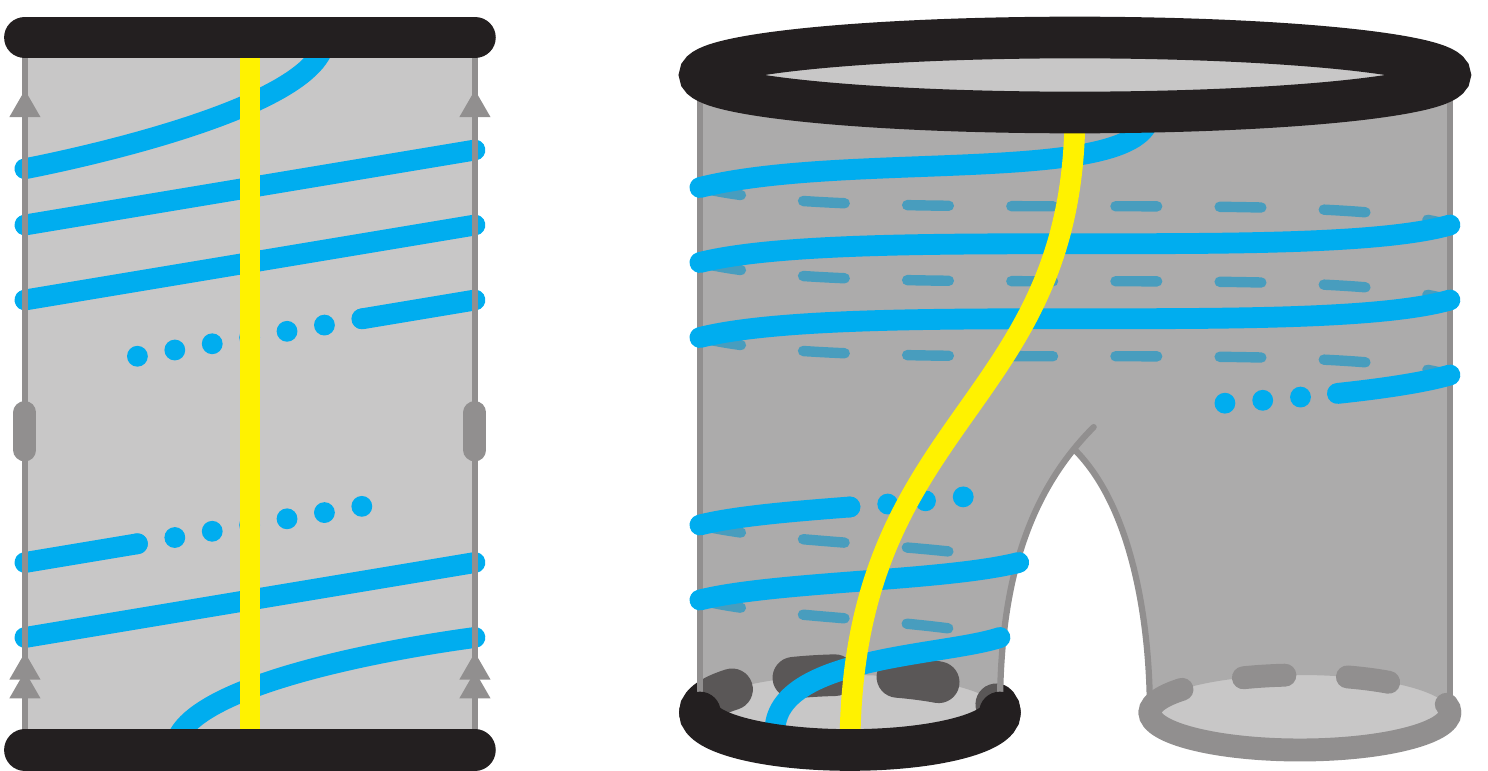}
\caption{Left: options for $Y\subset (I\times I)\cut (\Cyan{A}\cup \Yel{V})$. Right: transverse, isotopic arcs $\alpha,\alpha'$ cutting off no bigon lie in a pair of pants.}
\label{Fi:Pants}
\end{center}
\end{figure}

\begin{proof}[Proof of Lemma \ref{L:Arcs}]
Let $0<\varepsilon\ll1$ and take a proper isotopy $f_t:I\to X\setminus w$, $-\varepsilon\leq t\leq 1+\varepsilon$, such that, denoting each $f_t(I)=\alpha_t$, we have $\alpha_0=u_1$ and $\alpha_1=v_1$.  Denote $f:I\times [-\varepsilon,1+\varepsilon] \to X$ where each restriction $f|_{I\times\{t\}}\equiv f_t$. Assume that $f$ is generic in the sense that 
$f^{-1}(u_1)=A'$ and
$f^{-1}(v_1)=V'$ are 1-submanifolds of $I\times [-\varepsilon,1+\varepsilon]$ with $A'\pitchfork V'$. Denote $A=A'\cap(I\times(0,1])$ and $V=V'\cap(I\times[0,1))$, let $A_H$ and $V_H$ 
denote the set of points in $A$ and $V$ with horizontal tangent lines, assume that $f$ has been chosen (subject to the preceding requirements) to minimize the lexicographical quantity  $(|A|+|V|,|A_H|+|V_H|)$. Then $A$ (resp. $V$) is comprised of arcs, each with at least one endpoint on $I\times\{1\}$ (resp. $I\times \{0\}$), and $A_H$ (resp. $V_H$) consists of one point on each arc of $A$ (resp. $V$) whose endpoints both lie on $I\times\{1\}$ (resp. $I\times\{0\}$). Taking outermost disks carefully twice gives a disk $Y$ of $(I\times I)\cut (A\cup V)$ with $|\partial Y\cap A'|=1=|\partial Y\cap V'|$ (see Figure \ref{Fi:Pants}, left). Setting $X_0=f(Y)$ then
 confirms (A); 
  this implies (B).
 %
%
The existence part of (C) follows by induction on $|u_1\cap v_1|$, using (B) (see Figure \ref{Fi:Pants}, right); uniqueness follows from the assumption that no arc of $v$ is $\partial$-parallel. 
\end{proof}
 
 \begin{proof}[Proof of Lemma \ref{L:ArcsAbstract}]
Assume that the arcs of $u$ and $v$ are indexed so that the isotopy from $u\setminus w$ to $v\setminus w$ in $F\setminus w$ sends each $u_i$ to $v_i$. Suppose by way of contradiction that $u\neq v$. Choose an arc $u_1$ of $u\setminus w$.  
 Lemma \ref{L:Arcs} (A)  provides a compact disk $X_1$  of $(X\setminus w){\cut}(u_1\cup v_1)$ with $|\partial X_1\cap u_1|=1=|\partial X_1\cap v_1|$.  Since $X_1\subset X\setminus w$ is compact, (\ref{E:Triangle1}) implies that $u\cap\text{int}(X_1)=\varnothing$
 and, taking a disk $X_0$ of $X_1\cut v$ with $|\partial X_0\cap u|=1=|\partial X_0\cap v|$, that $X$ appears near $X_0$ as in Figure \ref{Fi:ArcsTriangle} with $u_1\equiv u_2$.  
 In particular, $X_1$ is not a bigon, nor is any disk of $X\cut (u_1\cup v_1)$.  Further, the arcs labeled $u_2$ and $v_2$ in the figure must correspond under the isotopy in $X\setminus w$, so {\it both} $u_1\equiv u_2$ {\it and} $v_1\equiv v_2$.
Denote $x\in\partial u_2\equiv\partial u_1$, $y\in\partial v_2\equiv\partial v_1$, and $\lambda_0,\lambda_1\subset \partial X$ as in Figure \ref{Fi:ArcsTriangle}. Since no disk of $X\cut (u_1\cup v_1)$ is a bigon, Lemma \ref{L:Arcs} (C) implies that $x$ abuts a compact disk $X_2$ of $(X\setminus w)\cut (u_1\cup v_1)$ with $|\partial X_2\cap u_1|=1=|\partial X_2\cap v_1|$. Hence, $\lambda_0\subset \partial X_2$.  Yet, $\lambda_0\not\subset\partial X_0$, so $X_0\neq X_2$, violating the uniqueness in Lemma \ref{L:Arcs} (C) at $y$.
\end{proof}

\begin{proof}[Proof of Lemma \ref{L:--Only}]
Apply moves (1)-(2) of Procedure \ref{Proc:Kill1} to $F_+$ and $F_-$ until neither move is possible. Either this fixes $F_+$ and $F_-$ near $\alpha$, or it removes $\alpha$.  In the latter case, $\alpha$ was $\partial$-parallel in both $F_+$ and $F_-$, so $i(\partial F_+,\partial F_-)_{\nu\partial \alpha}=0$ by Proposition \ref{P:BdryParallel0}, confirming the first claim; the second and third claims then hold vacuously.

Instead, we may assume for the rest of the proof that $F_+$ and $F_-$ admit neither move (1)-(2) of Procedure \ref{Proc:Kill1}. Denote $F_+'=F_+\setminus\inter\alpha$ and $\partial F_+'=L'$.  Then $F_+'$ is positive-definite with $\beta_1(F_+)-|F_+|=\beta_1(F_+')+1-|F_+'|$ by Sublemma \ref{SL:arccutdef} and Observation \ref{O:beta1}.

\begin{figure}
\begin{center}
\includegraphics[width=.46\textwidth]{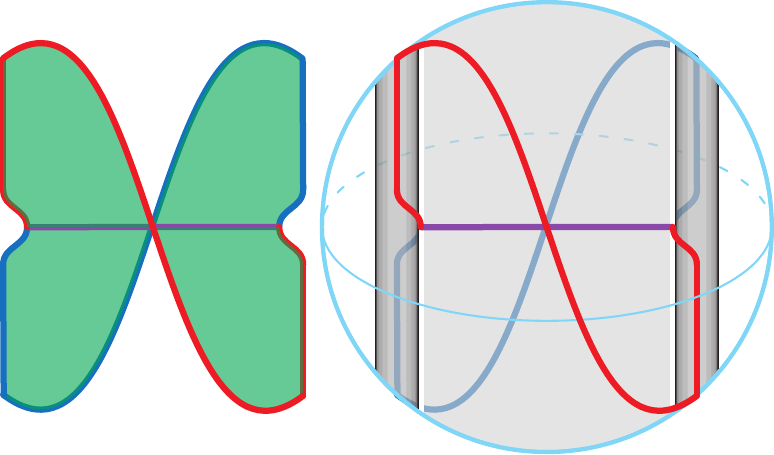}
\raisebox{.6in}{$~~\to~~$}
\includegraphics[width=.46\textwidth]{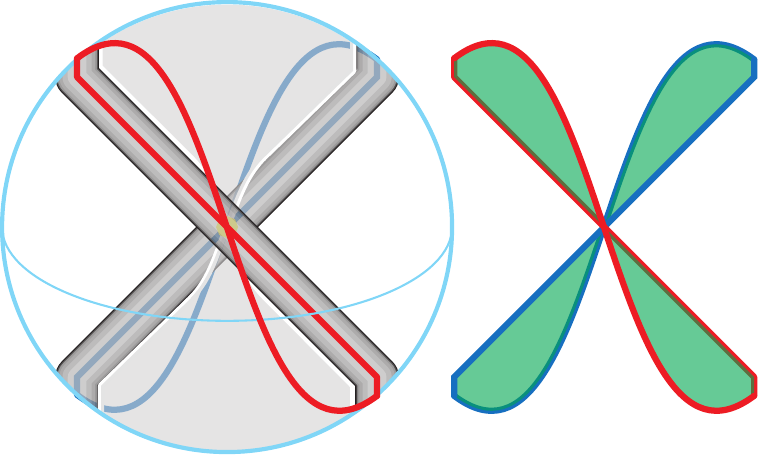}\\
\includegraphics[width=.46\textwidth]{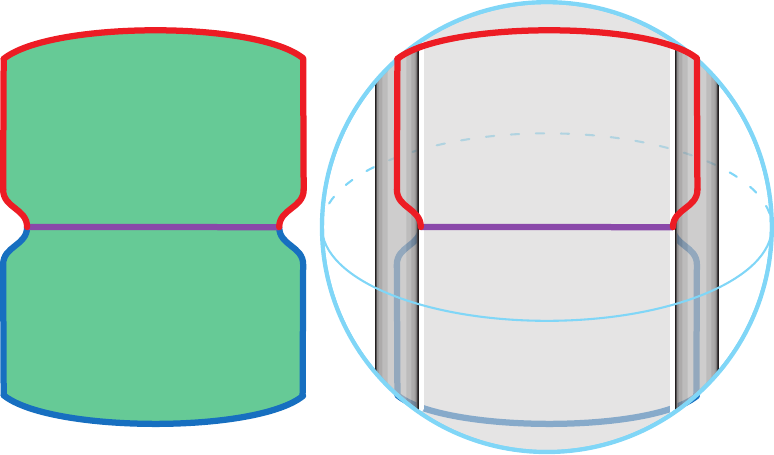}
\raisebox{.6in}{$~~\to~~$}
\includegraphics[width=.46\textwidth]{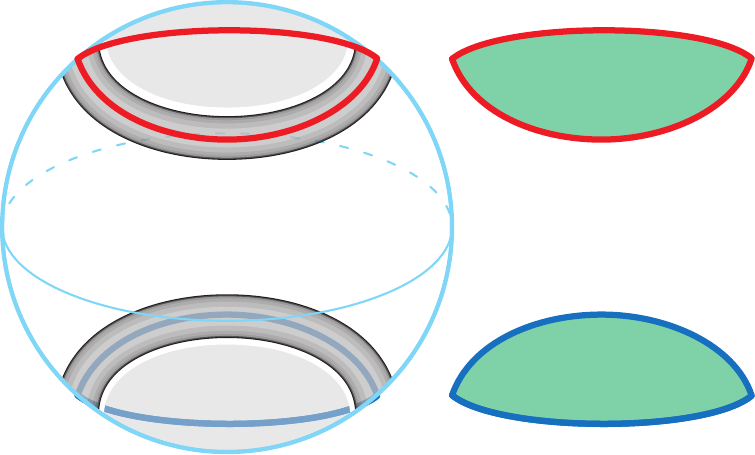}
\caption{A positive-definite surface $\FG{F_+}$ cannot intersect a negative-definite surface $\Gray{F_-}$ along an $\violet{\text{arc }\alpha}$ with $i(\partial F_+,\partial F_-)_{\nu\partial \alpha}=-2$ nor along a nonseparating $\violet{\text{arc }\alpha}$ with $i(\partial F_+,\partial F_-)_{\nu\partial \alpha}=0$.}
\label{Fi:--Only}
\end{center}
\end{figure}

Suppose, contrary to (A), that $i(\partial F_+,\partial F_-)_{\nu\partial\alpha}=-2$. Construct a surface $F_-'$ by adding one negative half-twist to $F_-$ along $\alpha$; see Figure \ref{Fi:--Only}, top.  Then $F_-'$ also spans $L'$ with $\beta_1(F_-')=\beta_1(F_-)$, and $F_-'$ is negative-definite by Proposition \ref{P:AddTwist}; hence, $L'$ is alternating, by Fact \ref{F:SplitDef}. Moreover, since $|F_-'|=|F_-|$, Proposition \ref{P:Arc0} implies that $|F_+|=|F_+'|$, hence $\beta_1(F_+)=\beta_1(F_+')+1$, and thus: 
\begin{align}\label{E:No--}
s(F_+')-s(F_-')&=s(F_+)-s(F_-)+2 &\text{\hfill using (\ref{E:SlopeArcs})} \nonumber \\
&=2 (\beta_1(F_+)+\beta_1(F_-))+2& \text{\hfill by Prop. \ref{P:gordlith}}\\
&=2 (\beta_1(F_+')+\beta_1(F_-'))+4.&\nonumber
\end{align}
This contradicts Proposition \ref{P:gordlith}.

For (B), assume by way of contradiction that $\alpha$ is nonseparating on $F_-$ and $i(\partial F_+,\partial F_-)_{\nu\partial\alpha}=0$. 
The argument here is identical to the first case, except that we define $F_-'=F_-\setminus\inter\alpha$ (see Figure \ref{Fi:--Only}, bottom). The assumption that $|F_-'|=|F_-|$ then gives $\beta_1(F_-')=\beta_1(F_-)-1$, which again contradicts Proposition \ref{P:gordlith}:
\begin{align}\label{E:No+-}
s(F_+')-s(F_-')&=s(F_+)-s(F_-)\nonumber \\
&=2 (\beta_1(F_+)+\beta_1(F_-))\\
&=2 (\beta_1(F_+')+\beta_1(F_-'))+2.\nonumber
\end{align}

For (C), assume for contradiction that
that $L$ is prime (hence nonsplit), $F_\pm$ are essential, $i(\partial F_+,\partial F_-)_{\nu\partial \alpha}\neq2$, and $\alpha$ is not $\partial$-parallel in both $F_\pm$. Part (A) implies that $i(\partial F_+,\partial F_-)_{\nu\partial \alpha}=0$.  Hence, by Proposition \ref{P:Kill1}, when we apply Procedure \ref{Proc:Kill1} to $F_\pm$ until it terminates, the resulting sequence $F_+=F_0\to F_1\to\cdots\to F_t$ features move (3) at least once.  
Consider the {\it last} move (3) $F_s\to F_{s+1}$ in this sequence. 
Observe that the following property holds for $i=t$ (because $F_t,F_-$ determine an alternating link diagram, by Proposition \ref{P:DetermineD}, and \emph{this diagram is prime} by Theorem 1 (b) of \cite{men84}) and therefore holds for {\it all} $i=s+1,\hdots, t$ (since moves (1) and (2) from Procedure \ref{Proc:Kill1} do not affect this property):
\begin{equation}\label{E:No+-Prop}
\text{Each arc in }F_-{\cut} F_i\text{ that separates }F_-\text{ is }\partial\text{-parallel in }F_-.
\end{equation}
The step $F_s\to F_{s+1}$ involves two arcs $\alpha_1,\alpha_2$ of $F_s\cap F_-$ and one arc $\alpha_3$ of $F_{s+1}\cap F_-$. 
The first two parts of this lemma imply without loss of generality that $\alpha_1$ is non-standard and thus separating in $F_-$. 
Perturb $\alpha_1$ in $F_-$ so that it is disjoint from $F_{s+1}$. Then $\alpha_1\subset F_-{\cut} F_{s+1}$ is separating on $F_-$, hence $\partial$-parallel in $F_-$ by (\ref{E:No+-Prop}), but this contradicts the hierarchy of the moves in Procedure \ref{Proc:Kill1}.
\end{proof}

\begin{proof}[Proof of Lemma \ref{L:Bigon}]
Fact \ref{F:SplitDef} implies that $L$ is alternating.  Since $L$ is also nonsplit, both $F_\pm$ are connected by Fact \ref{F:Split}. Moreover, Lemma \ref{L:--Only} (A) implies that every arc $\alpha$ of $F_+\cap F_-$, being standard, satisfies $i(\partial F_+,\partial F_-)_{\nu\partial\alpha}=+2$.  Thus, by Proposition \ref{P:DetermineD}, the pair $F_\pm$ determines a connected alternating diagram $D$ of $L$, which is prime by Theorem 1 (b) of \cite{men84}. 

\begin{figure}
\begin{center}
\labellist
\tiny\hair 4pt
\pinlabel {$\Orange{\boldsymbol{\alpha_+}}$} [c] at 175 485
\pinlabel {$\Cyan{\boldsymbol{\alpha_-}}$} [c] at 385 390
\endlabellist
\includegraphics[height=1.5in]{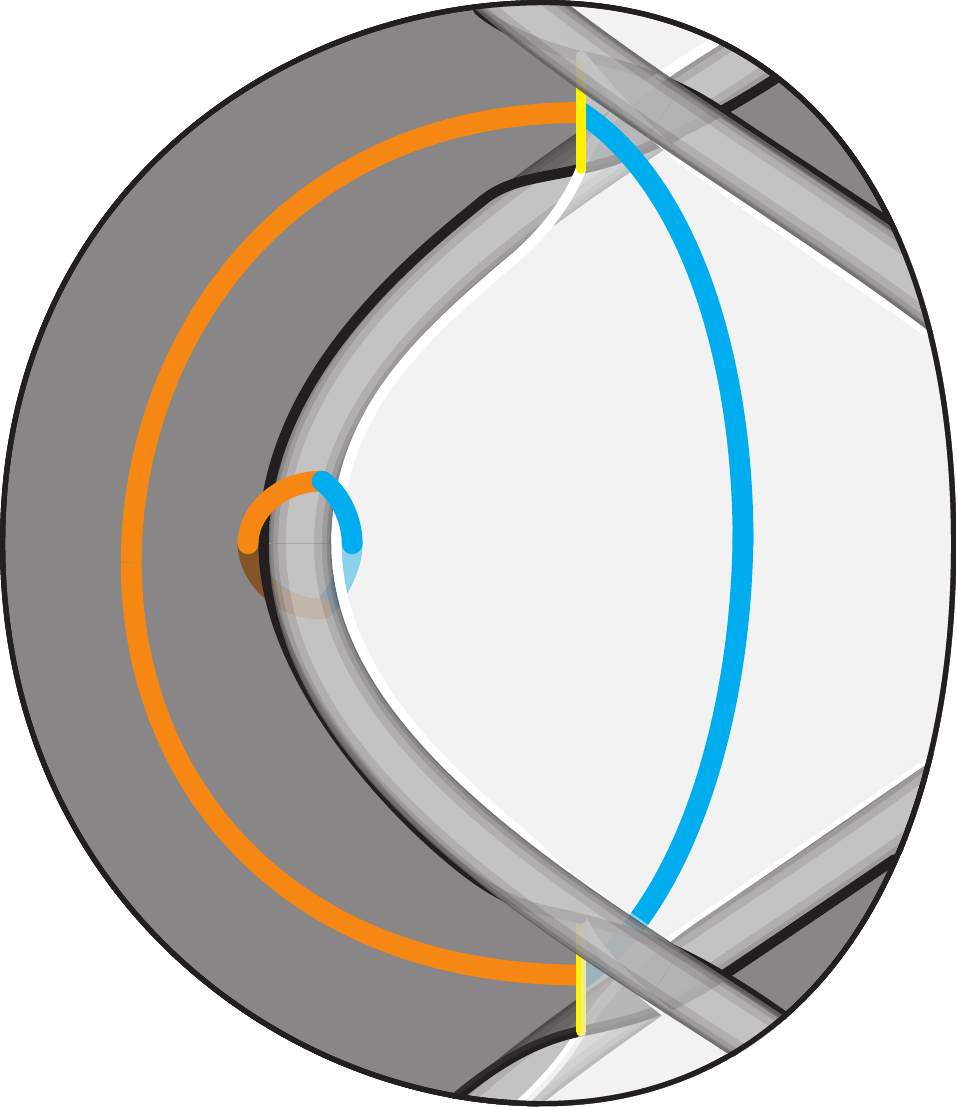}
\caption{If arcs $\alpha_\pm\subset F_\pm$ with $\partial\alpha_+=\partial\alpha_-\subset F_+\cap F_-$ are not isotopic in $F_\pm$ to $F_+\cap F_-$, then $\alpha_+\cup\alpha_-$ is isotopic in $S^3\setminus\inter L$ to a meridian on $\partial\nu L$.}
\label{Fi:BigonMeridian}
\end{center}
\end{figure}

Note that each component of each $F_\pm{\cut} F_\mp$ is a disk, corresponding to a checkerboard region of $S^2{\cut} D$.  Thus, if the endpoints of $\alpha_\pm$ lie on the same arc of $F_+\cap F_-$, then each $\alpha_\pm$ is parallel in $F_\pm{\cut} F_\mp$ to this arc.  Assume instead that the endpoints of $\alpha_\pm$ lie on distinct arcs of $F_+\cap F_-$.  Denote the disks of $F_\pm{\cut} F_\mp$ containing $\alpha_\pm$ by $X_\pm$.  Then $X_+$ and $X_-$ correspond to two oppositely colored disks of $S^2{\cut} D$, and since $D$ is prime these disks meet in at most one edge hence at most two crossings: $X_+\cap X_-=v_0\cup v_1$. Therefore, as shown in Figure \ref{Fi:BigonMeridian},  $\alpha_+\cup\alpha_-$ is isotopic in  $S^3\setminus\inter L$ to a meridian on $\partial \nu L$, contrary to the assumption that $\alpha_+$ and $\alpha_-$ are parallel in  $S^3\setminus\inter L$.
\end{proof}

Fact \ref{F:Kill2} and Lemma \ref{L:Bigon}   imply:

\begin{fact}\label{F:Bigon}
If $F_\pm$ are essential definite surfaces of opposite signs spanning a prime link $L$ and $\alpha_\pm\subset F_\pm{\cut} F_\mp$ are arcs which are parallel in $S^3\setminus\inter L$ 
and whose endpoints lie on distinct components of $F_+\cap F_-$, then at most one of these endpoints lies on a standard arc of $F_+\cap F_-$.
\end{fact}

\begin{prop}\label{P:IsoInW}
Suppose $F_-$ is an essential negative-definite surface spanning a prime link $L$ and $f_t:F_+\to S^3\setminus\inter L$, $t\in I$, is an isotopy of essential positive-definite spanning surfaces for $L$. Denote each $f_t(F_+)=F_t$. Assume generically that $F_t\pitchfork F_-$ for all but finitely many $t=t_1,\hdots,t_r$, where $0=t_0<t_1<\cdots<t_r<t_{r+1}=1$, that there is only one non-transverse point $p_i$ in each $F_{t_i}\cap F_-$, and that each $p_i$ is non-degenerate. For each $t\neq t_1,\hdots,t_r$, denote the union of the standard arcs of $F_t\cap F_-$ by $st_{F_t}$. Then $st_{F_0}$ and $st_{F_1}$ are isotopic in $F_-$.
\end{prop}

\begin{figure}
\begin{center}
\labellist
\tiny \hair 4pt
\pinlabel {(c)} [c] at 252 92
\pinlabel {(d)} [c] at 813 92
\endlabellist
\includegraphics[width=.475\textwidth]{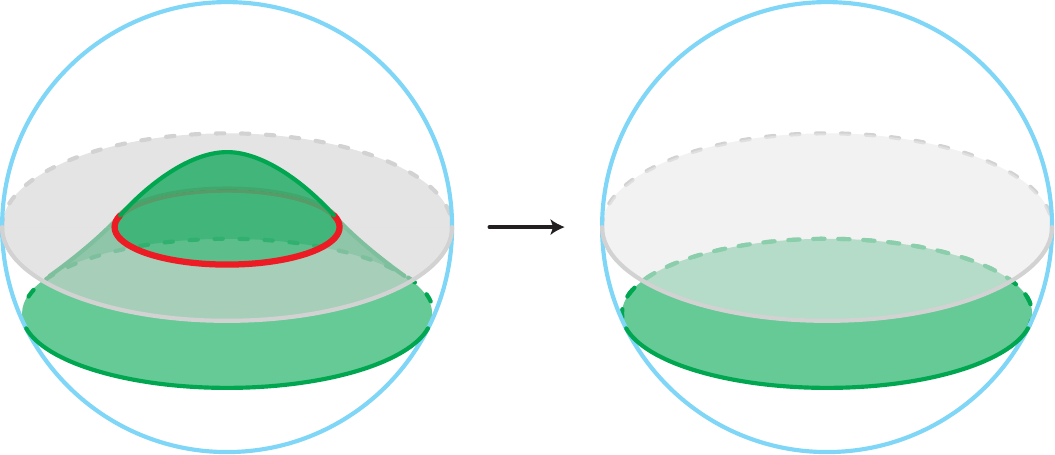} 
\hfill
\includegraphics[width=.475\textwidth]{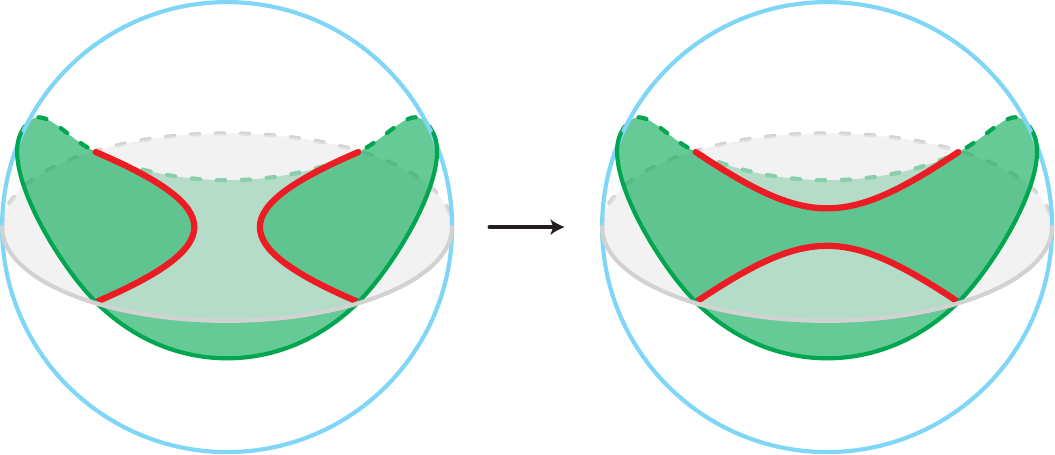}
\caption{(c) and (d) in the proof of Proposition \ref{P:IsoInW}}
\label{Fi:ChangesCD}
\end{center}
\end{figure}

\begin{proof}
Choose some positive $\varepsilon\ll\min\{t_{i+1}-t_i\}_{i=1}^{r+1}$.
Near each point $(p_i,t_i)\in (S^3\setminus\inter L)\times(0,1)$, $f_t$ changes $F_{t_i-\varepsilon}\cap F_-$ to $F_{t_i+\varepsilon}\cap F_-$ via one of the following moves or its inverse:
\begin{enumerate}[label=(\arabic*)]
\item removing a simple closed curve (Figure \ref{Fi:ChangesCD}, left); 
\item removing an arc that is $\partial$-parallel in both $F_\pm$ (Figure \ref{Fi:FWMove}, top);
\item merging two arcs near $\partial\nu L$ (Figure \ref{Fi:FWMove}, bottom);
\item (the sort of ``saddle point" shown right in Figure \ref{Fi:ChangesCD}).
\end{enumerate}
We must check that each of these gives an isotopy in $F_-$ from $st_{F_{t_i-\varepsilon}}$ to $st_{F_{t_i+\varepsilon}}$.  For (1) this is trivial; likewise for (2), using Proposition \ref{P:BdryParallel0}.
For (3), the two endpoints involved have opposite signs, so at least one of the un-merged arcs is non-standard, hence $\partial$-parallel in $F_-$ by Lemma \ref{L:--Only} (C); hence, the other un-merged arc is isotopic in $F_-$ to the merged arc, and the former is standard if and only if the latter is.

For (4), let $U\subset S^3\setminus\inter L$ denote the local neighborhood shown right in Figure \ref{Fi:ChangesCD}. Note that the arcs of $F_{t}\cap F_-\cap U$ lie on distinct arcs of $F_{t}\cap F_-$ either for both $t=t_i\pm \varepsilon$ or for neither.  
In the former case, Fact \ref{F:Bigon} and Lemma \ref{L:--Only} (C) imply, for both $t=t_i\pm \varepsilon$, that at least one of these arcs of $F_{t}\cap F_-$ is non-standard and thus $\partial$-parallel in $F_-$; hence, the second arcs of $F_{t_i\pm\varepsilon}\cap F_-$ that intersect $U$ are isotopic in $F_-$ to each other. 
In the latter case, this move either creates or removes a simple closed curve of $F_{t_i\pm \varepsilon}\cap F_-$. By Fact \ref{F:GreeneCircle}, this curve bounds a disk $X\subset F_-$, which guides the needed isotopy.
\end{proof}

\begin{proof}[Proof of Theorem \ref{T:DBW}]
The forward implication is straightforward. For the converse, apply Procedure \ref{Proc:Kill1} to 
$B'$ and $W$ 
to get an isotopy 
$B'\to B''$
 in $S^3\setminus\inter L$ after which 
 $B''\cap W$
  consists only of standard arcs.  Proposition \ref{P:IsoInW} gives an isotopy $f:B\cap W\to B''\cap W$ in $W$, and since $W$ cuts $B$ and $B''$ into disks, $f$ extends to an isotopy $B\cup W\to B''\cup W$ in $S^3\setminus\inter L$.  Remark \ref{R:DetermineD} and Fact \ref{F:Kill2} imply that the pairs 
  $B,W$ (and $B'',W$) and $B',W$  determine equivalent reduced alternating diagrams of $L$: $D=D_{B,W}\equiv D_{B',W}$.
  The same reasoning shows that  $D_{B',W}\equiv D_{B',W'}=D'$, so $D\equiv D'$.
\end{proof}

\section{Proofs of technical lemmas from \textsection\ref{S:MenascoH}}\label{S:Technical4}

In \textsection\ref{S:Technical4}, we adopt all setup from \textsection\ref{S:CSetup}. We will prove Lemmas \ref{L:Fair}, \ref{L:FairP}, and \ref{L:FlypingCircles} in \textsection\ref{S:Fair2}, 
Lemmas \ref{L:No12Iff} and \ref{L:Moves123}
in \textsection\ref{S:123Good},
 Lemmas \ref{L:Flyping1} and 
 \ref{L:BigonWFull1}  in \textsection\ref{S:5Good}, and Lemmas  
 \ref{L:GoodIff},
 \ref{L:DecreaseComplexity},
 \ref{L:SequenceNo12}, and
 \ref{L:IntoGood10} in \textsection\ref{S:6Good}.  

\subsection{Fair position}\label{S:Fair2}

\begin{proof}[Proof of Lemma \ref{L:Fair}]
Applying Procedure \ref{Proc:Kill1} to $F,W$ gives (a). Perturbing $F$ generically relative to $B,W$ while fixing $v_F$ and taking $C$ to be a thin regular neighborhood of $v$ in $S^3\cut\inter L$ as described in Remark \ref{R:CSetup} gives (b)-(f), and 
adjusting $F$ near $C$ gives (g) also.

  One may then isotope $F$ as follows, while preserving (a)-(g), until $S_+\cup S_-$ cuts $F$ into disks. 
    If $S_+\cup S_-$ does not cut $F$ into disks, then by a standard innermost circle argument, there is a circle $\gamma\subset F\setminus(S_+\cup S_-)$ that bounds a disk 
 $X\subset (S^3\setminus(\nu L\cup S_+\cup S_-)){\cut} F$ but bounds no disk in $F\setminus(S_+\cup S_-)$.%
\footnote{Choose a component $X'$ of $F{\cut}(S_+\cup S_-)$ that is not a disk; then choose any component of $\partial X'$ and take a parallel copy $\gamma'$ of it in $\text{int}(X')$. Note that $\gamma'$ 
bounds no disk in $X'$. Yet, $\gamma'$ does bound a disk $Z$ in 
$S^3\setminus(S_+\cup S_-\cup\nu L)$, 
and $\gamma'$ is 0-framed in $F$, so we may require that $Z\pitchfork F$ is comprised of circles, no arcs. Among all such choices for $Z$ (given $\gamma'$), choose one which minimizes $|Z\cap F|$.  Now choose an innermost disk $X\subset Z{\cut} F$ and take $\partial X=\gamma$.}
Since $F$ is incompressible, $\gamma$ bounds a disk $F_0\subset F$, and since $L$ is nonsplit and $\text{int}(X)\cap F=\varnothing$, the 2-sphere $X\cup F_0$ bounds a ball $Y$ in $(S^3\setminus\nu L){\cut} F$. 
Isotope $F$ near $F_0$ through $Y$ past $X$. This isotopy fixes $(F\setminus F_0)\cap (S_+\cup S_-)$ and removes all of $F_0\cap  (S_+\cup S_-)\neq\varnothing$, hence preserves (a)-(g) and decreases $|F\cap (S_+\cup S_-)|$. Ergo, any sequence of such moves terminates, and when it does, $F$ is in fair position.
\end{proof}

\begin{proof}[Proof of Lemma \ref{L:FairP}]
By Definition \ref{D:Fair} (h)
, $F$ intersects $C\setminus\inter L$ in disks, hence cuts it into balls; likewise with $H_\pm$. This proves (A). 

For (B), each component of $\partial F\cap S_\pm$ is an arc because $D$ is prime, hence nontrivial and connected; and no component $\gamma$ of $F\cap {S_0}$ nor $F\cap \partial C\cap S_\pm$ is a circle, or else, by (h), $\gamma$ would bound disks in $F$ in both incident {component}s of $S^3{\cut} (S_+\cup S_-\cup\nu L)$, but $F$ being a spanning surface, has no closed components.

For (C), consider a crossing ball $C_t$ where $F$ does not have a crossing band, and let $\gamma$ be a component of $F\cap \partial C_t$. By 
(d)
, $\partial F\cap C_t=\varnothing$, so $\gamma$ is a circle; (B) and (e) imply that $\partial S_0$ cuts $\gamma$ into arcs, each of whose endpoints are on distinct arcs of $\partial C_t\cap {S_0}$. Since each disk of $\partial C_t\cap S_\pm$ contains only two arcs of $\partial C_t\cap {S_0}$, $\gamma$ is uniquely determined up to isotopy of $(\gamma,\gamma\cap \partial S_0)$ in $(\partial C_t\setminus\nu L,\partial S_0)$. In particular, by (h), $\gamma$ bounds a saddle disk of $F\cap C_t$.
\end{proof}
 
 \begin{proof}[Proof of Lemma \ref{L:FlypingCircles}]
Ordering the $r$ circles of $F\cap S_+$ arbitrarily gives a sequence of flype-type re-plumbings $F=F_0\to F_1\to\cdots\to F_r$ where $F_r$ is disjoint from $S_+$, hence (by fair position) isotopic to $B$. 
 Theorem \ref{T:DBW} implies that $D_{F_r,W}\equiv D$. Putting the sequence in reverse, each $F_i$ is obtained by re-plumbing $F_{i+1}$ along a flyping cap (relative to $W$), so by Proposition \ref{P:FlypeReplumb} each $D_{F_i,W}$ is related to $D_{{F_{i+1},W}}$ by a flype which preserves the isotopy class of $W$. Ergo, $D_{F,W}$ and $D$ are related by a sequence of such flypes.
\end{proof}

\subsection{Properties of \ref{M:1}-, \ref{M:2}-, and \ref{M:3}-good position}\label{S:123Good}

 \begin{prop}\label{P:WCB}
 If $F$ is in fair position and no arc of $F\cap \wh{W}$ is parallel in $\wh{W}$ into $\partial C$, then no arc of $F\cap \wh{B}$ is parallel in $\wh{B}$ into $\partial C$.
 \end{prop}
 
 \begin{proof}
 Assume instead that
 some arc $\beta$ of $F\cap \wh{B}$ is parallel in $\wh{B}$ into $\partial C$.  Taking $\beta$ to be an outermost such arc in $\wh{B}$, let $\gamma$ denote the circle of $F\cap S_+$ containing $\beta$, and let $\omega,\omega'$ denote the arcs of $\gamma\cap\wh{W}$ incident to the arcs of $\gamma\cap C$ that are incident to $\beta$; see Figure \ref{Fi:Good01}, left.  Construct properly embedded arcs $\sigma\subset \wh{W}\cut F$ and $\sigma_+\subset F_\gamma$ with the same endpoints, one of each of $\omega,\omega'$. Then 
$\sigma$ and $\sigma_+$ are parallel in $S^3\setminus\inter L$, so Lemma \ref{L:Bigon} implies that $\sigma$ is parallel through a disk $W_0\subset W\cut F$ to $F\cap W$.  The disk $W_0$ must intersect $v$ because $\omega\neq\omega'$.  Consider an outermost disk $W_1$ of $W_0\cut v$: the arc $\alpha=\partial W_1\cap\partial W_0$ is an arc of $F\cap W\cut v$ which is parallel in $W$ into $v$, so contrary to assumption $\alpha\cap\wh{W}$ is parallel in $\wh{W}$ into $\partial C$.
 \end{proof}

\begin{figure}
\begin{center}

\labellist
\tiny\hair 4pt
\pinlabel{$\violet{\boldsymbol{\omega}}$} [b] at 70 -16
\pinlabel{$\violet{\boldsymbol{\omega'}}$} [b] at 90 -16
\pinlabel{$\violet{\boldsymbol{\beta}}$} at 12 68
\pinlabel{$\brown{\boldsymbol{\sigma}}$} at 82 17
\endlabellist
\includegraphics[height=.35\textwidth]{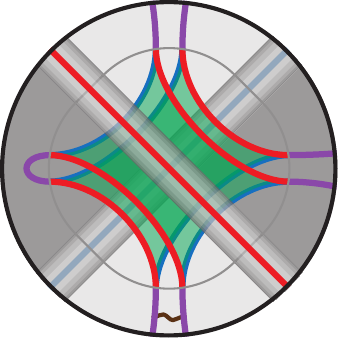}
\hfill
\labellist
\tiny\hair 4pt
\pinlabel {$\brown{\boldsymbol{\alpha}}$} [l] at 84 86
\pinlabel {$\red{\boldsymbol{\tau_1}}$} [l] at 47 48
\pinlabel {$\red{\boldsymbol{\tau_3}}$} [l] at 83 48
\pinlabel {$\red{\boldsymbol{\tau_5}}$} [l] at 119 48
\pinlabel {$\Navy{\boldsymbol{\tau_2}}$} [l] at 64 39
\pinlabel {$\Navy{\boldsymbol{\tau_4}}$} [l] at 100 39
\pinlabel {$\Navy{\boldsymbol{\tau_6}}$} [l] at 137 20
\pinlabel {$\violet{\boldsymbol{\beta_1}}$} [l] at 30 7
\pinlabel {$\violet{\boldsymbol{\beta_2}}$} [l] at 48 7
\pinlabel {$\violet{\boldsymbol{\beta_3}}$} [l] at 66 7
\pinlabel {$\violet{\boldsymbol{\beta_4}}$} [l] at 84 7
\pinlabel {$\violet{\boldsymbol{\beta_5}}$} [l] at 102 7
\pinlabel {$\violet{\boldsymbol{\beta_6}}$} [l] at 120 7
\endlabellist
\includegraphics[height=.35\textwidth]{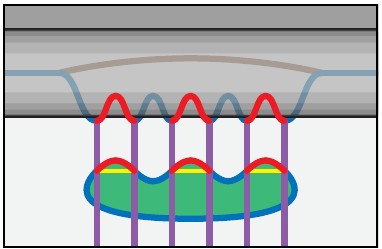}
\caption{The situations in the proofs of Propositions \ref{P:WCB} and \ref{P:BadPTGood2}}
\label{Fi:Good01}
\end{center}
\end{figure}

\begin{prop}\label{P:Good001}
Suppose $F$ is in fair position and no arc of $F\cap W\cut v$ is parallel in $W$ into $v$. If $X\subset S^3{\cut}(F\cup \nu L)$ is a properly embedded disk such that $\partial X\subset F\setminus C$ intersects $S_0$ in a nonempty collection of points on mutually distinct arcs of $F\cap S_0$, then $\partial X$ intersects both $B$ and $W$.
\end{prop}

\begin{proof}
Denote $\partial X=\gamma$ and assume that $X\pitchfork B,W$. Incompressibility implies that $\gamma$ bounds a disk $F_0\subset \text{int}(F)$. 
%
If $\gamma\cap W=\varnothing$, then $F_0\cap W$ is nonempty\footnote{Otherwise, each arc $\alpha$ of $F_0\cap B$ would lie in a single arc of $F_0\cap S_0$, which would contain both endpoints of $\alpha$, contrary to assumption.} and comprised of circles, violating Definition \ref{D:Fair} (a).  Assume instead that $\gamma\cap B=\varnothing$.  Then $F_0\cap B$ is nonempty and comprised of circles. Choose an innermost disk $F_1$ of $F_0{\cut} B$ in $F_0$. Lemma \ref{L:FairP} (B) implies that $F_1\cap v\neq\varnothing$; choose an outermost disk $F_2$ of $F_1{\cut} v$. 
Then $\partial F_2\cap \partial F_1$ is an arc of $F\cap B{\cut} v$ with both endpoints on the same vertical arc, violating Proposition \ref{P:WCB}.
\end{proof}

\begin{proof}[Proof of Lemma \ref{L:No12Iff}]
The contrapositive of (III) $\implies$ (I) is clear, as is (I) $\iff$ (II), by fair position.
Finally, if (I) and (II) hold, then these prohibit Move \ref{M:2} and Proposition \ref{P:Good001} prohibits Move \ref{M:1}.
\end{proof}



\begin{proof}[Proof of Lemma \ref{L:BigonWFull0}]
Suppose otherwise.  Then, using Definition \ref{D:Fair} (a) to apply Lemma \ref{L:Bigon}, $\alpha$ is parallel through a disk $W_0\subset W\cut F$ to an arc $\omega\subset F\cap W$.  Condition (e) of Definition \ref{D:pt} implies that $\omega\cap v\neq\varnothing$.  Taking an outermost disk $W_1\subset W_0{\cut} F$, $\partial W_1$ consists of an arc of $F\cap W{\cut} v$ and an arc in $v$ which are parallel through $W{\cut} v$. This violates the \ref{M:2}-good position of $F$, due to Lemma \ref{L:No12Iff}.
\end{proof}

\begin{prop}\label{P:PT2}
If $F$ is in \ref{M:2}-good position and $F\to F'$ is a push-through move, then $F'$ is in \ref{M:2}-good position.
\end{prop} 

\begin{proof}
By Observation \ref{O:pt}, $F'$ is in fair position. By Lemma \ref{L:No12Iff}, no arc of $F\cap \wh{W}$ is parallel in $\wh{W}$ into $\partial C$, and it suffices to prove that the same holds for $F'$.  This is clear if the arc $\alpha$ guiding the push-through move lies in $S_{\pm B}$ (as $F'\cap \wh{W}=F\cap\wh{W}$) or has at least one endpoint on $\partial\nu L$ (as all arcs of $(F'\cap \wh{W})\setminus (F\cap\wh{W})$ have an endpoint on $\partial\nu L$), and Lemma \ref{L:BigonWFull0} implies that $\alpha\not\subset\wh{W}$.
\end{proof}

\begin{prop}\label{P:BadPTGood2}
Suppose $F$ is in \ref{M:3}-good position, $E$ is an edge, $\gamma$ is a circle of $F\cap S_\pm$, and $\alpha\subset \text{int}(E_\pm){\cut} \partial F$ is an arc with $\partial \alpha\subset\gamma$, so that (by Proposition \ref{P:Ess}) $\alpha$ is parallel in $E{\cut}\partial F$ to an arc $\alpha'\subset\partial F$.
Then either $\alpha'$ intersects both $\partial B$ and $\partial W$ or it intersects neither.
 \end{prop}

\begin{proof}
Assume by way of contradiction that $\alpha\subset S_-$, $\alpha'\cap \partial W\neq\varnothing$, and $\alpha'\cap \partial B=\varnothing$; the proofs with $\alpha\subset S_+$ and with $\partial B$ and $\partial W$ reversed are analogous.  
%
Denote the arcs of $F\cap S_0$ incident to $\alpha'$ by $\beta_1,\hdots,\beta_{2m}$, indexed by their order along $\alpha'$ as in Figure \ref{Fi:Good01}, right, and note that $\beta_1,\hdots,\beta_{2m}$ are distinct, because $F$ admits no Move \ref{M:3}. For each $i=1,\hdots, 2m$, construct a properly embedded arc $\tau_i$ in the
%
disk $F_i$ of $F\cap H_\pm$ incident to $\beta_{i-1}$ and $\beta_i$, taking indices modulo $2m$; do this so that each $\tau_i$ shares an endpoint with each $\tau_{i\pm 1}$. 
%
The circle $\tau=\bigcup_i\tau_i\subset F$ bounds a disk 
 $X\subset S^3{\cut} (F\cup\nu L)$ disjoint from $B$; yet, $\tau\cap S_0$ consists of one point on each of the mutually disjoint arcs $\beta_1,\hdots,\beta_{2m}$, contradicting Proposition \ref{P:Good001}.
\end{proof}

\begin{proof}[Proof of Lemma \ref{L:Moves123}]
One direction is trivial.  For the other, suppose $F$ is in \ref{M:3}-good position, but such an arc exists; choose one, $\beta$, which is outermost in $\wh{B}$. 
Then $\beta$ is parallel in ${S_0}{\cut} F$ to an arc $\alpha$ of $\partial B{\cut}\partial F$, and $\partial\alpha$ are the endpoints of an arc  ${\alpha'}\subset\partial F\cap E$. Denoting $\alpha''={\alpha'}\setminus\inter\partial\alpha$, $\alpha''\cap\partial B=\varnothing$, as $\beta$ is outermost, but $\alpha''\cap\partial W\neq\varnothing$, as $F$ admits no Move \ref{M:3}. This contradicts Proposition \ref{P:BadPTGood2}.\footnote{Alternatively, this contradicts Definition \ref{D:Fair} (a) directly, since $i(\alpha'',\partial W)=0$. We will actually {\it need} to use Proposition \ref{P:BadPTGood2} in the proof of Proposition 7.7.}
\end{proof}

\subsection{Properties of \ref{M:5}-good position}\label{S:5Good}

\begin{subl}\label{SL:Good11}
If $F$ is in \ref{M:5}-good position, then 
no arc of $F\cap \wh{W}$ has endpoints on a crossing ball $C_t$ and incident edge $E$.
\end{subl}


\begin{proof}
Suppose otherwise.  Then there is an arc $\alpha$ of $F\cap W$ for which some arc $\alpha_0$ of $\alpha\cut v$ cuts off a triangle of $W\cut (F\cup v)$. Denote $\partial \alpha_0=\{x,y\}$ where $x\in v_t$ and $y\in E$. 
Since no Move \ref{M:4} is possible, the arc $\lambda$ of $\partial F\cap E\cut\{y\}$ incident to $C_t$ must intersect $\partial S_0$. Moreover, $\text{int}(\lambda)\cap\partial S_0\subset \partial B$ (because $\alpha$ cuts off a triangle), and Definition \ref{D:Fair} (a) gives $i(\partial F,\partial W)_{\nu y}=+1$, which implies that $|\text{int}(\lambda)\cap\partial S_0|\geq 2$ (compare with Figure \ref{Fi:Move4}). Ergo, contrary to assumption, $F$ admits Move \ref{M:5} between $y$ and $C_t$.
\end{proof}


\begin{prop}\label{P:Flyping}
If a properly embedded arc $\alpha'\subset W$ with $\alpha'\pitchfork v\neq \varnothing$ is isotopic in $W$ to an arc $\alpha\subset\wh{W}$, then some arc $\alpha'_0$ of $\alpha'{\cut} v$ cuts off a bigon or triangle of $W\cut (v\cup\alpha')$.\footnote{That is, one endpoint of $\alpha'_0$ lies on a vertical arc $v_0\subset v$ and the other lies either on $v_0$ or on an arc of $\partial W{\cut}\partial v$ incident to $v_0$.}
\end{prop}

\begin{proof}
Isotope $(\alpha,\partial\alpha)$ in $(\wh{W},\partial\wh{W}\cap\partial W)$ to minimize $|\alpha\pitchfork\alpha'|$.
Now by Lemma \ref{L:Arcs} (A), there is a disk $W_0$ of $W{\cut}(\alpha\cup\alpha')$ such that $\partial W_0\cap \alpha$ and $\partial W_0\cap \alpha'$ each consist of a single arc.
The minimality of $\alpha\cap\alpha'$ and the assumption that $\alpha'\cap v\neq\varnothing$ imply that $W_0\cap v\neq \varnothing$; since $\alpha\cap v=\varnothing$ it follows that there is an outermost disk $W_1$ of $W_0{\cut} v$ with $\partial W_1\cap\alpha=\varnothing$. Take $\alpha'_0=\partial W_1\cap\alpha'$.
\end{proof}

\begin{proof}[Proof of Lemma \ref{L:Flyping1}]
By Proposition \ref{P:Flyping}, either $\alpha'\subset\wh{W}$ or an arc of $\alpha'\cap \wh{W}$ has a form prohibited by Lemma  \ref{L:No12Iff} or Sublemma \ref{SL:Good11}. 
\end{proof}

\begin{prop}\label{P:BadPTGood3}
If $F$ is in \ref{M:5}-good position, then no circle $\gamma$ of $F\cap S_\pm$ intersects any edge $E$ in more than one arc.
\end{prop}

\begin{proof}
Suppose otherwise.  Then there is an arc $\alpha\subset S_{\pm E}{\cut}\partial F$ whose endpoints lie on distinct arcs of $\gamma\cap E$.  Proposition \ref{P:Ess} implies that $\alpha$ is parallel through a disk $E_0\subset E$ into $\partial F$. By assumption, $E_0$ must intersect $\partial B$ or $\partial W$, so Proposition \ref{P:BadPTGood2} implies that $E_0\cap\partial W\neq\varnothing$; yet, the endpoints of any outermost arc of $E_0\cap\partial W$ are points of $\partial F\cap\partial W$ of opposite sign, violating Definition \ref{D:Fair} (a).
\end{proof}

\begin{proof}[Proof of Lemma \ref{L:BigonWFull1}]
Assume for simplicity that the circle $\gamma\subset F\cap S_\pm$ that contains $\partial \alpha$ lies in $F\cap S_+$, 
and assume for contradiction that $\alpha\subset S_{+W}$. Lemma \ref{L:BigonWFull0} implies that $\partial\alpha\not \subset \wh{W}$, while Definitions \ref{D:pt} (e)-(f) and \ref{D:Fair} (a) imply that $\partial\alpha\not\subset\partial\nu L$.  Hence, one endpoint of $\alpha$ lies on an arc $\gamma'$ of $\gamma\cap\partial\nu L$, while the other endpoint lies on an arc $\gamma''$ of $\gamma\cap \wh{W}$; see Figure \ref{Fi:BadPTW1}, left. 

The push-through move $F\to F'$ along $\alpha$ introduces two oppositely signed points $x_\pm$ of $\partial F'\cap\partial W$, and Lemma \ref{L:--Only} (C) implies that the negative point $x_-$ is an endpoint of an arc $\omega$ of $F'\cap W$ that cuts off a disk $W_0$ from $W$; denote $\partial \omega=\{x_-,z\}$.
 Note that $W_0\cap v\neq \varnothing$ because $\gamma'\cap\gamma''=\varnothing$ by Definition \ref{D:pt} (e), so there  is an outermost disk $W_1$ of $W_0{\cut} v$  with $x_-\notin \partial W_1$.  
Denoting 
$\omega_1=\partial W_1\cap\omega$, Lemma \ref{L:No12Iff} and Remark \ref{R:CBEss} imply that $\omega_1$ cuts off a triangle of $W\cut (v\cup\omega)$, and Sublemma \ref{SL:Good11} implies that $\omega_1$ is one of the two arcs of $(F'\cap W\cut v)$ not in $(F\cap W\cut v)$.  
Since $z\in\omega_1$ and $x_-\notin\omega_1$,  it follows that $z=x_+$. Yet, this implies that $\partial\omega=\{x_+,x_-\}$ and thus that $\omega$ comes from a {\it circle} of $F\cap W$, violating Definition \ref{D:Fair} (a).
 \end{proof}
 
  \begin{figure}
\begin{center}
\labellist
\tiny\hair 4pt
\pinlabel {$\brown{\boldsymbol{\alpha}}$} [l] at 20 50
\pinlabel {$\violet{\boldsymbol{\gamma''}}$} [l] at 5 -3
\pinlabel {$\red{\boldsymbol{\gamma'}}$} [l] at 37 -3
\pinlabel {${\boldsymbol{x_+}}$} [l] at 115 59
\pinlabel {${\boldsymbol{x_-}}$} [l] at 115 33
\pinlabel {$\violet{\boldsymbol{\omega}}$} [l] at 101 21
\pinlabel {$\violet{\boldsymbol{W_0}}$} [l] at 115 12
\endlabellist
\includegraphics[width=.45\textwidth]{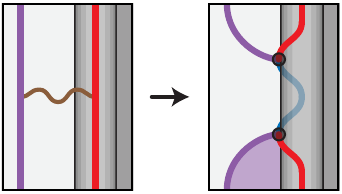}
\hfill
 \labellist
\tiny\hair 4pt
\pinlabel {$\violet{\boldsymbol{\gamma_1}}$} [l] at 200 415
\pinlabel {$\FG{\boldsymbol{X}}$} [l] at 224 123
\endlabellist
\includegraphics[width=.525\textwidth]{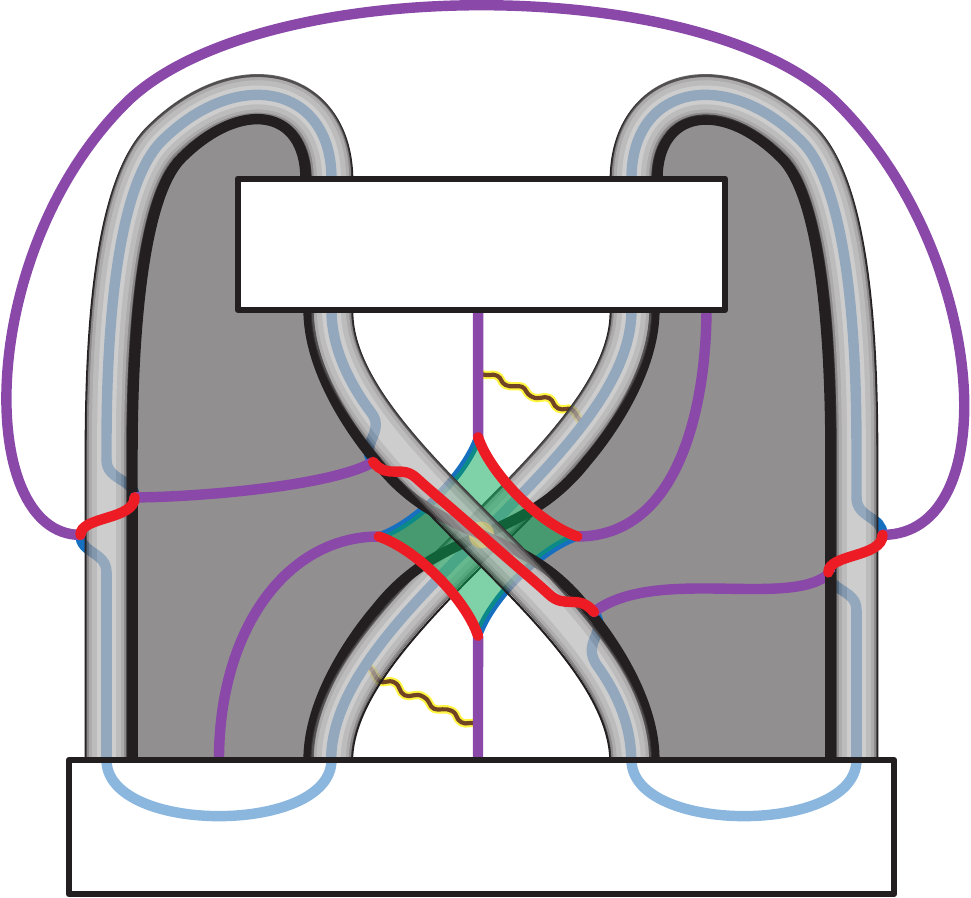}
\caption{The situations in the proofs of Lemmas \ref{L:BigonWFull1} and \ref{L:FlypingSaddle}}
\label{Fi:BadPTW1}
\end{center}
\end{figure}


\begin{proof}[Proof of Lemma \ref{L:FlypingSaddle}]
Suppose otherwise. Then, because $\gamma_1$ is a flyping circle and $|F\cap C_t|$ is a single saddle disk $X$, there are at most two circles of $F\cap S_-$ that intersect both disks of $S_-\setminus(\pi^{-1}\circ\pi(\gamma_0))$, and one must both abut $X$ and traverse the underpass at $C_t$.  Yet, as shown right in Figure \ref{Fi:BadPTW1}, this implies that $F$ admits a push-through move near $C_t$ along an arc in $S_{-W}$, contradicting Lemma \ref{L:BigonWFull1}.
\end{proof}

\subsection{Properties of \ref{M:6}-good position}\label{S:6Good}


\begin{proof}[Proof of Lemma \ref{L:GoodIff}]
The equivalence of (I) and (II) is straightforward (using Proposition \ref{P:WCB}), so it suffices to prove that (I) and (III) are equivalent.  If (I) holds, then (a), the condition on $F\cap\wh{B}$, and Lemmas \ref{L:No12Iff} and \ref{L:Moves123} prohibit Moves \ref{M:1}-\ref{M:3}, while (b) and (c) prohibit Moves \ref{M:4}-\ref{M:6}.
Conversely, if $F$ is in \ref{M:6}-good position, then Definition \ref{D:Fair} (a) and Lemmas \ref{L:No12Iff} and \ref{L:Moves123} give (a) and the condition on $F\cap\wh{B}$, and Sublemma \ref{SL:Good11} gives (b); (c) is then straightfoward.\footnote{If an arc $\alpha$ of $F\cap\wh{W}$ has endpoints $x,y$ on edges $E,E'$ which are adjacent at a crossing ball $C_t$ where $F$ has no crossing band, then denote the arcs of $\partial F\cap S_\pm$ traversing the over/underpass at $C_t$ by $\lambda_\pm$, and consider the disk $W_0$ of $\wh{W}{\cut}\alpha$ with $\partial W_0\subset\alpha\cup E\cup E'\cup\partial C_t$. 
Any arc of $F\cap\text{int}(W_0)$ is isotopic in $W_0$ to $\alpha$, so by passing to an outermost arc we may assume that $F\cap\text{int}(W_0)=\varnothing$. If $\alpha$ is incident to both $\lambda_+$ and $\lambda_-$ then $F$ admits Move \ref{M:6}; otherwise $F$ admits Move \ref{M:5}.}
\end{proof}

\begin{proof}[Proof of Lemma \ref{L:DecreaseComplexity}]
For the claim regarding fair position, but Proposition \ref{P:PT2} takes care of Moves \ref{M:7}-\ref{M:9} and the other moves are easy to check (we rely here on Convention \ref{Conv:Hier11}). The remaining claims are straightforward: note that a push-through move on a circle of $F\cap S_+$ via an arc $\alpha\subset S_+{\cut} F$ changes $\lb F \rb_3$ by $|\partial\alpha\cap\partial\nu L|-2\leq0$. 
\end{proof}

\begin{proof}[Proof of Lemma \ref{L:SequenceNo12}]
By Lemma \ref{L:No12Iff}, no arc of $F_0\cap \wh{W}$ is parallel in $\wh{W}$ into $\partial C$; we claim that the same holds for $F_1$.  This is obvious if $F_0\to F_1$ is Move \ref{M:3}, \ref{M:4}, \ref{M:6}, \ref{M:8}, or \ref{M:9}, and since any Move \ref{M:5} has the same effect as a push-through move followed by a Move \ref{M:3}, Proposition \ref{P:PT2} confirms our claim if $F_0\to F_1$ is Move \ref{M:5} or Move \ref{M:7}.  Thus, by Lemma \ref{L:No12Iff}, $F_1$ is in \ref{M:2}-good position. Moreover, any Move \ref{M:3}-\ref{M:9} $F_0\to F_{1}$ restricts to an isotopy $F_0\cap W\to F_1\cap W$ in $W$ which fixes $v_{F_0}\subset v_{F_{1}}$.  Repeating this argument confirms (A) and (B).  

For (C), observe that any Move \ref{M:7}, \ref{M:8}, or \ref{M:9} $F_i\to F_{i+1}$ fixes $F_i\cap W=F_{i+1}\cap W$ and, by (A), preserves \ref{M:2}-good position. Hence, such a move gives rise to no arc of type (a) nor (b) nor (c) from Lemma \ref{L:GoodIff} (I). 
The same reasoning applies to a Move \ref{M:3} along an arc in $\wh{B}$. For (D), observe also that by Definition \ref{D:pt} (e) no Move \ref{M:8} nor \ref{M:9} $F_i\to F_{i+1}$ can create an arc of $F_{i+1}\cap \wh{B}$ that is $\partial$-parallel in $B$.
 \end{proof}

\begin{proof}[Proof of Lemma \ref{L:IntoGood10}]
Proposition \ref{P:GoodComplexity7} implies that the lexicographical quantity $(\lb F \rb_1,\lb F \rb_2,\lb F \rb_3)$ is always at least $(0,0,0)$, and so Lemma \ref{L:DecreaseComplexity} implies that any sequence of Moves \ref{M:1}-\ref{M:7} terminates. Thus, any maximal sequence of Moves \ref{M:1}-\ref{M:9} (terminating only in \ref{M:9}-good position) has the form $F\to\cdots\to F_1\to\cdots$, where $F_1$ is in \ref{M:7}-good position with $\lb F_1 \rb_3\geq0$. By Lemma \ref{L:SequenceNo12} (D), the remaining sequence $F_1\to\cdots$ uses only Moves \ref{M:8}-\ref{M:9}; both  decrease $\lb \cdot \rb_3$.
\end{proof}

\section{Proofs of technical lemmas from \textsection\ref{S:Replumb}}\label{S:Technical5}

In \textsection\ref{S:Technical5}, set up as in \textsection\ref{S:CSetup}, we prove Lemmas \ref{L:Prism}, \ref{L:LastFair}, 
 and \ref{L:LastZero}.
 
\subsection{Innermost circles in \ref{M:9}-good position}\label{S:FAlt2}

In \textsection\ref{S:FAlt2}, we adopt all setup from in \textsection\ref{S:FAlt}, assuming in particular that $F$ is in \ref{M:9}-good position with $F\cap S_+\neq\varnothing$, and that $T_+$ is an innermost disk of $S_+{\cut} F$ with $\partial T_+=\gamma_0$ and $T_-=S_-\cap (\pi^{-1}\circ \pi(T_+)).$

\begin{proof}[Proof of Lemma \ref{L:GammaOnC}]
For (A), if $|\gamma_0\cap C^+_t|\geq 1$
, then, as shown right in Figure \ref{Fi:GammaOnC}, there would be a push-through move along a nearby arc $\alpha\subset S_{+W}$, violating Lemma \ref{L:BigonWFull1}.  For (B), Sublemma \ref{SL:Good11} implies that $\omega\cap E=\varnothing$, and this implies that $\gamma\cap E=\varnothing$: otherwise, $F$ would admit a push-through move along an arc in $S_{+W}$, again violating Lemma \ref{L:BigonWFull1}. Part (B) implies that $\gamma_0$ does not traverse the overpass at $C_s$; parts (C)-(D) now follow from (A), Lemma \ref{L:FairP} (C), and the facts that ${\gamma_0}$ is innermost and $D$ is alternating.
\end{proof}

As we prepare to prove Lemma \ref{L:Prism}, note that each circle of $F\cap\text{int}(T_-)$ is disjoint from $S_0$ and intersects $C^-$ only where it abuts crossing bands, hence is isotopic in $T_-\cut S_0$ into $\partial\wh{B}$; in particular, each such circle is innermost on $S_-$.  Likewise, and more importantly:

\begin{obs}%
\label{O:Delta0}
Let $\delta$ be an arc of $F\cap\text{int}(T_-)$.  Then
$\overline{\delta}$ is properly isotopic in $T_-\cut S_0$ to an arc $\beta$ of $T_-\cap \partial\wh{B}$, and $\beta$ is parallel through a disk $B_0\subset\wh{B}\cap T_-$ into $\gamma_0$; hence, $\delta$ is outermost in $\text{int}(T_-)$.
\end{obs}

\begin{prop}\label{P:DeltaTypes}
Every arc $\delta$ of $F\cap\text{int}(T_-)$ has one of the three types of local neighborhoods shown in Figure \ref{Fi:DeltaTypes}.
\end{prop} 

\begin{figure}
\begin{center}
\labellist
\tiny \hair 4pt
\pinlabel {$\red{\boldsymbol{\gamma_0}}$} at 25 -5
\pinlabel {$\red{\boldsymbol{\gamma_0}}$} at 100 -5
\pinlabel {$\red{\boldsymbol{\gamma_0}}$} at 139 38
\pinlabel {$\violet{\boldsymbol{\gamma_0}}$} at 299 55
\pinlabel {$\violet{\boldsymbol{\gamma_0}}$} at 301 67
\pinlabel {$\violet{\boldsymbol{\gamma_0}}$} at 498 57
\pinlabel {$\Navy{\boldsymbol{\delta}}$} at 65 102
\pinlabel {$\Navy{\boldsymbol{\delta}}$} at 220 102
\pinlabel {$\Navy{\boldsymbol{\delta}}$} at 398 102
\endlabellist
\includegraphics[width=\textwidth]{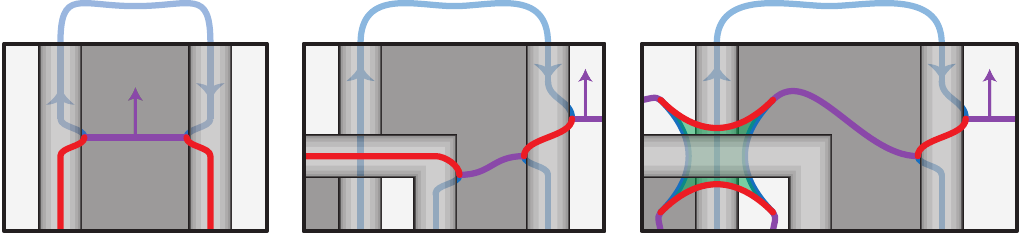}
\caption{The three types of {\Navy{arc $\delta$}} of $F\cap \text{int}(T_-)$}
\label{Fi:DeltaTypes}
\end{center}
\end{figure}

\begin{proof}
Orient $\delta$ so that the disk $B_0$ described in Observation \ref{O:Delta0} lies to the right of $\delta$, when viewed from $H_+$.  Denote the initial and terminal points of $\delta$ by $\delta_-$ and   $\delta_+$. Definition \ref{D:Fair} (a) gives $\delta_-\notin\partial W$, so there are three possibilities for $\delta_-$ and two for $\delta_+$; see Figure \ref{Fi:DeltaEndpoints}
.   
 
 \begin{figure}
\begin{center}
\labellist
\tiny\hair 4pt
\pinlabel {$\red{\boldsymbol{\gamma_0}}$} [c] at -10 34
\pinlabel {$\red{\boldsymbol{\gamma_0}}$} [c] at 119 34
\pinlabel {$\Navy{\boldsymbol{\delta}}$} [c] at 56 116
\endlabellist
\includegraphics[width=.175\textwidth]{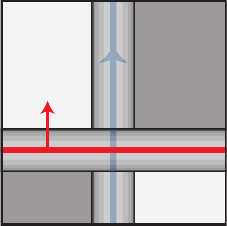}\hfill
\labellist
\tiny\hair 4pt
\pinlabel {$\violet{\boldsymbol{\gamma_0}}$} [c] at -10 62
\pinlabel {$\violet{\boldsymbol{\gamma_0}}$} [c] at 119 62
\pinlabel {$\Navy{\boldsymbol{\delta}}$} [c] at 56 116
\endlabellist
\includegraphics[width=.175\textwidth]{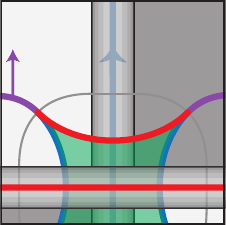}\hfill
\labellist
\tiny\hair 4pt
\pinlabel {$\red{\boldsymbol{\gamma_0}}$} [c] at 35 15
\pinlabel {$\violet{\boldsymbol{\gamma_0}}$} [c] at 118 55
\pinlabel {$\Navy{\boldsymbol{\delta}}$} [c] at 56 116
\endlabellist
\includegraphics[width=.175\textwidth]{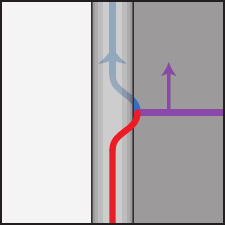}\hfill
\labellist
\tiny\hair 4pt
\pinlabel {$\red{\boldsymbol{\gamma_0}}$} [c] at 35 15
\pinlabel {$\Navy{\boldsymbol{\delta}}$} [c] at 56 116
\endlabellist
\includegraphics[width=.175\textwidth]{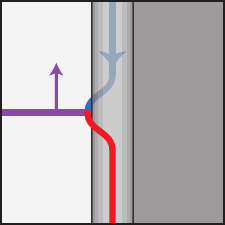}\hfill
\labellist
\tiny\hair 4pt
\pinlabel {$\red{\boldsymbol{\gamma_0}}$} [c] at 73 15
\pinlabel {$\violet{\boldsymbol{\gamma_0}}$} [c] at 119 55
\pinlabel {$\Navy{\boldsymbol{\delta}}$} [c] at 56 116
\endlabellist
\includegraphics[width=.175\textwidth]{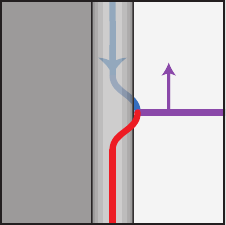}\\
\begin{tabular}{|cc|}\hline
location & description \\ \hline
$\delta_-\in C^-$&$\pi(\delta_-)=\pi(w)$ for $w\in\gamma_0\cap\partial F$\\
$\delta_-\in C^-$&$\pi(z)=\pi(w)$ for $w\in\gamma_0$ on saddle\\
$\delta_-\in\partial B$&$i(\partial F,\partial B)_{\nu\delta_-}=1$\\\ 
\sout{$\delta_-\in\partial W$}&\sout{$i(\partial F,\partial W)_{\nu\delta_-}=-1$}\\
$\delta_+\in\partial B$&$i(\partial F,\partial B)_{\nu\delta_+}=-1$ \\
$\delta_+\in\partial W$&$i(\partial F,\partial W)_{\nu\delta_+}=1$ \\ \hline
\end{tabular}
\caption{The possible types of endpoints of an $\Navy{\text{arc }\delta}$ of $F\cap \text{int}(T_-)$.}
\label{Fi:DeltaEndpoints}
\end{center}
\end{figure}

Comparing Figures \ref{Fi:DeltaTypes} and \ref{Fi:DeltaEndpoints}, it now suffices to prove that $\delta_-\in\partial B$ if and only if $\delta_+\subset\partial B$. 
Suppose otherwise.  There are three cases to consider. These appear above the dashed lines in Figure \ref{Fi:BiffBCases}; 
in each case, we must have the full configuration shown in the figure, or else $F$ would admit Move \ref{M:7} or \ref{M:8} (along an arc $\alpha$ shown in the figure). Hence, in each case, $F$ admits a push-through move along an arc $\omega\subset S_{-W}$, contradicting Lemma \ref{L:BigonWFull1}.\footnote{To check that these moves satisfy Definition \ref{D:pt} (e), we also use Lemma \ref{L:Moves123} (left in Figure \ref{Fi:BiffBCases}), Definition \ref{D:Fair} (a) and the assumption that $D$ is reduced (center), and Sublemma \ref{SL:Good11} (right).}
\end{proof}

\begin{figure}[t]
\begin{center}
\labellist
\tiny\hair 4pt
\pinlabel {$\violet{\boldsymbol{\gamma_0}}$} [l] at 133 83
\pinlabel {$\red{\boldsymbol{\gamma_0}}$} [l] at 10 -4
\pinlabel {$\brown{\boldsymbol{\alpha}}$} [l] at 67 72
\pinlabel {$\brown{\boldsymbol{\omega}}$} [l] at 120 12
\small\hair 4pt
\pinlabel {$\Navy{\boldsymbol{\delta}}$} [l] at 53 128
\endlabellist
\includegraphics[height=.275\textwidth]{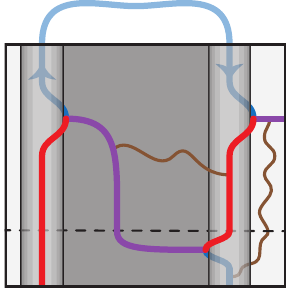}
\hfill
\labellist
\tiny\hair 4pt
\pinlabel {$\red{\boldsymbol{\gamma_0}}$} [l] at -19 63
\pinlabel {$\brown{\boldsymbol{\alpha}}$} [l] at 67 72
\pinlabel {$\brown{\boldsymbol{\omega}}$} [l] at 28 15
\pinlabel {$\red{\boldsymbol{\gamma_0}}$} [l] at 116 -4
\small\hair 4pt
\pinlabel {$\Navy{\boldsymbol{\delta}}$} [l] at 61 128
\endlabellist
\includegraphics[height=.275\textwidth]{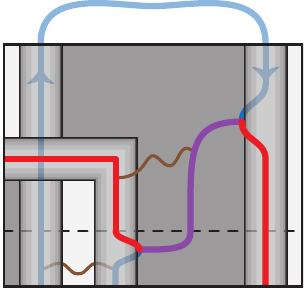}
\hfill
\labellist
\tiny\hair 4pt
\pinlabel {$\violet{\boldsymbol{\gamma_0}}$} [l] at 0 123
\pinlabel {$\brown{\boldsymbol{\alpha}}$} [l] at 107 75
\pinlabel {$\brown{\boldsymbol{\omega}}$} [l] at 47 18
\pinlabel {$\red{\boldsymbol{\gamma_0}}$} [l] at 142 -4
\small\hair 4pt
\pinlabel {$\Navy{\boldsymbol{\delta}}$} [l] at 85 128
\endlabellist
\includegraphics[height=.275\textwidth]{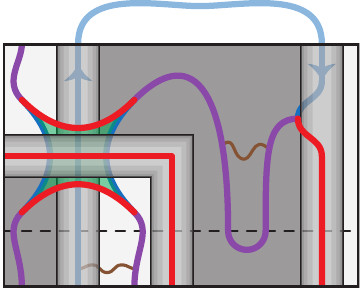}
\caption{
$\Navy{\delta}$ cannot have exactly one endpoint on $\partial B$.}
\label{Fi:BiffBCases}
\end{center}
\end{figure}

\begin{prop}\label{P:Bad7}
If $F$ is in \ref{M:7}-good position and an arc $\alpha$ of $\partial F\cap S_+$ lies on a single edge, then $\alpha$ has one endpoint on $\partial\wh{B}$ and one on $\partial\wh{W}$.  
\end{prop}

\begin{proof}
If both endpoints of $\alpha$ were in $\partial\wh{W}$, then one of these endpoints would be negative, violating Definition \ref{D:Fair} (a).   If both endpoints of $\alpha$ were in $\partial\wh{B}$, then $F$ would admit either Move \ref{M:3} or Move \ref{M:7}.
\end{proof}

\begin{proof}[Proof of Lemma \ref{L:Prism}]
Given a prism $P_i$, consider the endpoint $x_i$ of $\omega_i$ that lies in $P_i$.    If $x_i\in\partial C$, then $P_i$ is of type I, by Lemma \ref{L:GammaOnC}  and Proposition \ref{P:DeltaTypes}.
Otherwise, let $\lambda_1$ denote the arc of $\gamma_0\cap\partial\nu L$ incident to $x_i$.
If $\lambda_1$ traverses an overpass, then $P_i$ is of type II, due to Proposition \ref{P:DeltaTypes}.
Otherwise, by Proposition \ref{P:DeltaTypes}, 
$\lambda_1$ is incident to a non-standard arc $\beta$ of $\gamma\cap \wh{B}$, which is incident to a second arc $\lambda_2$ of $\gamma\cap\partial\nu L$ as shown left in Figure \ref{Fi:DeltaTypes}. This arc $\lambda_2$ must traverse an overpass, due to Proposition \ref{P:Bad7}, alternatingness, and Definition \ref{D:Fair} (a), so Proposition \ref{P:DeltaTypes} implies that $P_i$ is of type III.
\end{proof}

\subsection{Properties of Move \ref{M:10}}\label{S:Move10B}

Observation \ref{O:Delta0} implies:

\begin{obs}\label{O:Delta1}
For each disk $X$ of $F\cap H_-\cap Y_1$, $|\partial X\cap \partial Y_1|\leq1$.
\end{obs}

\begin{prop}\label{P:LastOmegas}
If 
$F\to F'=(F\cut U)\cup V$ is a Move \ref{M:10} along $\gamma_0$, then the arcs of $\gamma_0\cap S_0$ abut mutually disjoint disks of $F\cap H_-$, each of which contains at most one arc of $F\cap H_-\cap \partial Y_2$.
\end{prop}

\begin{proof}
Suppose instead that distinct arcs $\alpha_1,\alpha_2$ of $\gamma_0\cap S_0$
abut the same disk $X$ of $F\cap H_-$.  Choose points $x_i\in\alpha_i$.   By Observation \ref{O:Delta1} and Lemma \ref{L:Prism}, we may construct a properly embedded arc $\alpha_-\subset X$  for which $\pi(\alpha_-)\cap\pi(T_+)=\partial\alpha_-=\{x_i,x_j\}$. Also construct a properly embedded arc $\alpha_+\subset F_{\gamma_0}$ with $\partial \alpha_+=\{x_i,x_j\}$. Then the circle $\alpha_+\cup\alpha_-\subset F$ is 0-framed but not nullhomologous, contrary to definiteness. The last part then follows, using Lemma \ref{L:Prism}.
\end{proof}

\begin{proof}[Proof of Lemma \ref{L:LastFair}]
Adopt the notation preceding the definition of Move \ref{M:10}, so that $F'=(F{\cut} U)\cup V$, and recall Figure \ref{Fi:LastMove}.  
Applying Lemma \ref{L:GoodIff} to $F$, Lemma \ref{L:Prism} implies that arcs comprise $F'\cap S_0$ and that no disk of $W\cut (F'\cup v)$ is a bigon. 

We check that $F'$ satisfies conditions (a) and (h) of Definition \ref{D:Fair}, as (b)-(g) are then straightforward. For (a), if $F'\cap W$ contains circles, then each one bounds a disk in $W$ by Fact \ref{F:GreeneCircle}, and an innermost one $\gamma$ bounds a disk $W_0$ in $W$ disjoint from $F'$; $W_0$ must intersect $v$, or else $\gamma$ would be a circle of $F'\cap S_0$; yet, an outermost disk $W_1$ of $W_0{\cut} v$ is a bigon of $W\cut (F'\cup v)$. Thus, $F'\cap W$ contains no circles. 
To complete the proof of (a), note that each point $x$ of $\partial F'\cap \partial W$ either is an endpoint of an arc of $F\cap W$ or lies in $P$, and in either case  is positive: $i(\partial F',\partial W)_{\nu x}=+1$ (see Figure \ref{Fi:LastMove}). 

For (h), each component of $F'\cap H_+$ is also a component of $F\cap H_+$, hence a disk. Likewise, each component of $F'\cap C$ is either a component of $F\cap C$ or a crossing band. 
Regarding $F'_-=F'\cap H_-$, each component of $F'_-\cap Y_1\setminus V$ is also a component of $F\cap H_-\cap Y_1$, hence a disk, and likewise for $F'_-\cap Y_2$.  Observation \ref{O:Delta1} and the last part of Proposition \ref{P:LastOmegas} further imply that each of these disks abuts $\partial P$ in at most one arc. It thus suffices to observe in Figure \ref{Fi:LastMove} that each component of $F'\cap P$ is a disk. 
\end{proof}

\begin{prop}\label{P:LastZero}
If $F_0\to F_1$ is a Move \ref{M:10} and $F_1\to F_2$ is a sequence of Moves \ref{M:1}-\ref{M:9} leaving $F_2$ in \ref{M:10}-good position, then the isotopy $F_1\to F_2$ restricts to to an isotopy $F_1\cap W\setminus v_{F_1} \to v\setminus v_{F_1}$ in $W\setminus v_{F_1}$.
\end{prop}

\begin{proof}
By Lemma \ref{L:LastFair}, $F_1$ is in fair position. Now apply Lemma \ref{L:SequenceNo12} (B); note that $v_{F_2}=v$, by \ref{M:10}-good position.
\end{proof}

\begin{figure}
\begin{center}
\labellist
\tiny\hair4pt
\pinlabel {$\red{\boldsymbol{\gamma_0}}$} at -6 222
\pinlabel {$\violet{\boldsymbol{\gamma_0}}$} at 46 270
\pinlabel {$\violet{\boldsymbol{\gamma_0}}$} at 252 263
\pinlabel {$\violet{\boldsymbol{W_0}}$} at 477 107
\endlabellist
\includegraphics[width=.8\textwidth]{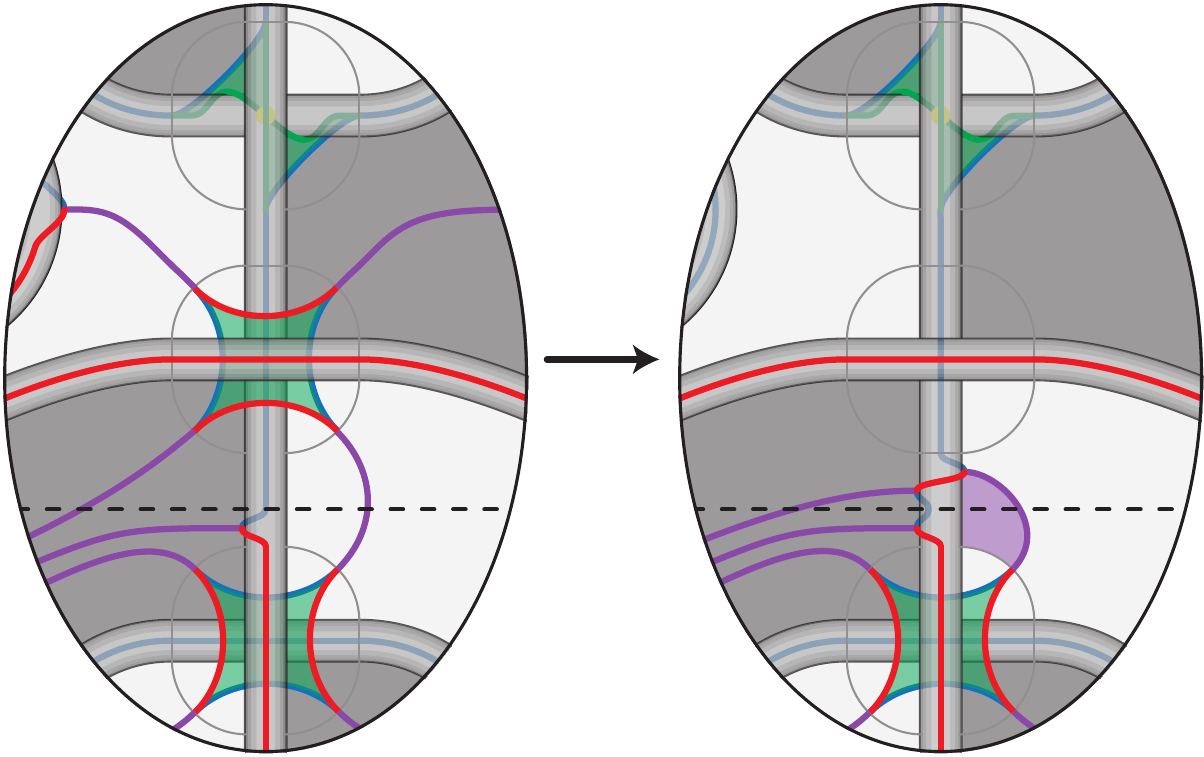}
\caption{A triangle $W_0$ arising via Move \ref{M:10}}
\label{Fi:LastZero}
\end{center}
\end{figure}

\begin{proof}[Proof of Lemma \ref{L:LastZero}]
By Lemma \ref{L:GoodIff}, no disk $X$ of $W\cut (F\cup v)$ satisfies $|\partial X\cap v|=1=|\partial X\cap F|$, so any disks $W_0$ of $W\cut (F'\cup v)$ with $|\partial W_0\cap v|=1=|\partial W_0\cap F'|$ are triangles that arise near type I prisms as shown in Figure \ref{Fi:LastZero}. Thus, using  Proposition \ref{P:LastZero}, Lemma \ref{L:ArcsAbstract} implies that $F_1\cap W=v_{F_1}$. This confirms (A).  %
Lemma \ref{L:LastMove1} (B) thus implies that $F_1$ is in \ref{M:9}-good position; 
hence, by hypothesis, $F_1$ is in \ref{M:10}-good position, giving (B): $F\cap S_+=\gamma_0$.

Therefore (c.f. Observation \ref{O:Delta0}), in each prism $P_i$, the points labeled $y_i,z_i$ in Figure \ref{Fi:LastMove} lie on the boundary of the same disk of $F\cap H_-$.  This nearly contradicts Proposition \ref{P:LastOmegas}; the only possibility is that there is only one prism, i.e. $|\gamma_0\cap\wh{W}|=1$. The prism cannot be of type I by (A), nor of type (B) because $D$ is prime, so it is of type III. Hence, $\gamma_0$ is a flyping circle.
\end{proof}

\end{document}